\renewcommand{\theequation}{\arabic{section}.\arabic{equation}}
\newcommand{\angles}[2]{\langle#1,#2\rangle}
\newcommand{\Sp}{\text{Sp}}
\newcommand{\sign}{\mathrm{sgn}}
\newtheorem{thm}{Theorem}[section]
\newtheorem{lem}[thm]{Lemma}
\newtheorem{prop}[thm]{Proposition}
\theoremstyle{definition}
\newtheorem{defi}{Definition}[section]
\newtheorem{property}[defi]{Property}
\theoremstyle{remark}
\newtheorem{rmk}{Remark}[section]
\begin{document}
	
\begin{frontmatter}	
\title{Bifurcation formula for transition paths in stochastic dynamical systems by spectral flow}
	
\author[1]{Jinqiao Duan\fnref{fn1}}

\ead{duan@gbu.edu.cn}

\affiliation[1]{organization={School of Sciences, Great Bay University},
	addressline={No.16 Daxue Road, Songshan Lake District}, 
	city={Dongguan},
	postcode={523000}, 
	state={Guangdong},
	country={China}}

\fntext[fn1]{This work was partly supported by the NSFC grant 12141107, the Natural Science Foundation of Guangdong (Grant NO. 2025A1515011188), the Cross Disciplinary Research Team on Data Science and Intelligent Medicine (2023KCXTD054), and the Guangdong-Dongguan Joint Research Fund  (Grant 2023A151514 0016).}

\author[2,3,1]{Zhihao Zhao\corref{cor1}\fnref{fn2}}

\ead{zhaozhihao@gbu.edu.cn}
	 
\cortext[cor1]{Corresponding author} 

\affiliation[2]{organization={Great Bay Institute for Advanced Study},
	addressline={No.16 Daxue Road, Songshan Lake District},
	city={Dongguan},
	postcode={523000},
	state={Guangdong},
	country={China}}

\fntext[fn2]{This work was partly supported by the NSFC grant 12401129.}     

\affiliation[3]{organization={University of Science and Technology of China},
	addressline={No.96 JinZhai Road, Baohe District},
	city={Hefei},
	postcode={230026},
	state={Anhui},
	country={China}}

\begin{abstract}
This paper investigates bifurcation phenomena and stability of most probable transition paths (MPTPs) in stochastic dynamical systems through a combined variational and spectral flow approach. Within the Onsager-Machlup framework, MPTPs are characterized as minimizers of an energy-dependent Lagrangian functional incorporating noise intensity. Existence criteria for such minimizers are established through critical value analysis and variational techniques. The main theoretical advancement is a spectral flow formula that detects bifurcation points and quantifies stability changes under noise perturbations. Specifically, the analysis reveals:
(i) noise-sensitive MPTPs where variations in noise intensity destroy the minimizer property, and
(ii) noise-robust MPTPs where stability is maintained despite finite noise fluctuations.
These results establish a correspondence between Lagrangian bifurcations and stochastic phase transitions, providing a mathematical foundation for predicting noise-driven transition mechanisms in stochastic systems.

\end{abstract}

	\begin{keyword}
	most probable transition path\sep Hamiltonian system\sep spectral flow\sep Onsager-Machlup functional\sep variational bifurcation
	
	

\end{keyword}	
	
\end{frontmatter}

	\tableofcontents
	\section{Introduction}
The Onsager-Machlup (OM) theory serves as a fundamental tool in non-equilibrium statistical physics and stochastic processes, especially for characterizing the most probable paths of noise-driven dynamical systems. Given two fixed endpoints and a path connecting them, the probability of stochastic trajectories close to the given path is approximately governed by the OM functional. The minimizer of this functional, known as the most probable transition path (MPTP), is used to quantify rare events and helps analyze how noise drives transitions; see \cite{onsager_fluctuations_1953,durr_onsager-machlup_1978,takahashi_probability_1981,taniguchi_onsager-machlup_2007,coulibaly-pasquier_onsager-machlup_2014} and references therein for foundational results on stochastic differential equations driven by Brownian motion. Recent advances on related topics include \cite{hu_transition_2021,huang_most_2023,liu_onsager-machlup_2023,liu_onsagermachlup_2024,huang_probability_2025}. 
   
The existence of minimizers for a given functional is a fundamental problem in variational analysis. In the context of OM theory, the MPTP is characterized by a minimizer of a free-time Lagrangian action functional. Such a minimizer is a pair $(x, T)$, where $x$ is a trajectory solving the Euler-Lagrange equation and $T$ is the associated optimal transition time. Since each Euler-Lagrange orbit lies on a fixed Hamiltonian energy level set, the optimal choice of the transition time $T$ ensures that the resulting transition path lies on the zero energy level. 
The existence of such a minimizer is not guaranteed and depends on the choice of the energy level set; see, e.g., \cite{mane_lagrangian_1997-1,contreras_lagrangian_1997-1}. A key result in this area, as established by Asselle \cite{asselle_existence_2016}, shows that a minimizer exists when the energy level exceeds one Ma\~n\'e critical value. This principle supports several numerical approaches for identifying MPTPs, such as the graph minimizer method \cite{du_graph_2021,li_gamma-limit_2021}. For the Dirichlet boundary value problem as we considered, the existence of these orbits is further constrained by the choice of the two endpoints $x_\pm$. This relationship introduces another critical value, denoted as $k_0(L;x_\pm)$ (see \eqref{sec2:26}). If the energy level is below this critical threshold, it is not possible to find transition paths at that given energy. However, if we relax the constraint on the energy level while imposing an upper bound on the transition time, the action functional is bounded from below and guarantees the existence of a minimizer with high energy. Based on this modified framework, with the assurance of a minimizer, we investigate the bifurcation phenomena that arise in the OM functional setup in response to varying noise intensity.   
 
The study of bifurcations in MPTPs has been a significant area of research, particularly within the framework of the weak-noise limit as described by Freidlin-Wentzell (FW) theory; see \cite{maier_scaling_1996}. In this setting, the most probable transition typically follows a heteroclinic orbit of a deterministic system; see, e.g., \cite{dykman_observable_1994,fleurantin_dynamical_2023,zakine_minimum-action_2023}. Bifurcations of the MPTP are directly associated with phase transitions in the stochastic system, making the bifurcation point of a model parameter an indicator of critical transitions.
 
In contrast to the weak-noise limit case, this paper investigates bifurcation phenomena within the OM theory, where the noise intensity $\sigma$ is a central parameter. This MPTP is dependent on the value of $\sigma$. It is natural to consider the bifurcation phenomenon of the transition path with respect to noise intensity. A variational bifurcation occurs when a perturbation in $\sigma$ causes the MPTP, as a critical point of the OM functional, to lose its status as the global minimizer.  We would like to figure out how the minimum property of transition paths depends on the choice of $\sigma$. 
 
In (non)linear eigenvalue problems, the Morse index\textemdash a count of the number of negative eigenvalues\textemdash serves as a key tool for detecting bifurcations of the trivial solution. Specifically, a change in the Morse index indicates the occurrence of a bifurcation; see, e.g., \cite{rabinowitz_aspects_1973-1,weber_perturbed_2002,capietto_stability_2013}.
There is an extended framework using the spectral flow, which provides a topological count of how eigenvalues of a family of operators cross zero; see \cite{fitzpatrick_spectral_1999,pejsachowicz_bifurcation_2013,alexander_spectral_2016}. We then employ the framework to detect bifurcation points with respect to noise intensities.

For the Dirichlet boundary value problem of fixed-time Euler-Lagrange system, the bifurcation of transition path is defined by the occurrence of conjugate points along the transition path. Equivalently, when considering a time-rescaling of the system, the time at which a conjugate point appears corresponds directly to a bifurcation point with respect to the scaling parameter; see \cite{musso_morse_2005-1,deng_multi-dimensional_2010} and references therein.
In OM functional framework, we establish a rigorous connection between this scaling parameter and the noise intensity $\sigma$, through the application of spectral flow theory. 
Therefore, we obtain the bifurcation result of the MPTP with respect to the noise intensity. 

Furthermore, we extend this analysis to the free-time Euler-Lagrange system, where a solution is a pair $(x,T)$ that represents a critical point of the free-time Lagrangian action functional. 
In this setting, we demonstrate that the noise intensity not only corresponds to the scaling parameter of the Euler-Lagrange orbit but also determines a discrete $\{0,1\}$-index. This index arises directly from the exceptional term that distinguishes the OM functional from the classical FW functional, revealing a richer bifurcation structure.

In summary, we analyze bifurcation phenomena in both fixed-time and free-time OM functionals, connecting them to Ma\~n\'e critical values and Hamiltonian energy levels. Furthermore, we develop a spectral flow criterion for bifurcation and stability in the OM framework. By relating the Morse index to the spectral flow and introducing an additional $\{0,1\}$-valued index, we give a new classification of stability for MPTPs.  

The remainder of the paper is organized as follows:

In Section \ref{sec2:27}, we introduce the OM functional and formulate the MPTP problem as a variational problem involving a Lagrangian functional. We discuss existence results by Ma\~n\'e critical values and energy levels.
In Section \ref{sec3:32}, we study the Morse index of critical points and establish its relationship with spectral flow, providing a rigorous tool to quantify stability and detect bifurcations.
Section \ref{sec4:47} is devoted to the bifurcation analysis of MPTPs with respect to the noise intensity. We develop a bifurcation criterion involving the Maslov index and a $\{0,1\}$-valued correction.
In Section \ref{sec5:24}, we analyze the linear stability of MPTPs and derive conditions under which minimizer property is preserved or lost under small perturbations of noise intensity.

\section{MPTP and Lagrangian functional}\label{sec2:27}
In this section, we introduce the concept of the most probable transition path (MPTP) as a critical point of a minimal Lagrangian functional. 

Let $V:\mathbb{R}^n\rightarrow\mathbb{R}$ be a continuous function with the following properties:
    \begin{property}\label{sec1:5}
 (1) $V(x)$ is $C^4$-continuous;

 (2) The function $U(x):=\sigma\Delta V(x)-\frac{1}{2}|\nabla V(x)|^2$ is bounded from above, and its maximum points are contained in a bounded domain. 
  
 (3) There exist two local minimizers $x_0,x_1$ such that $\nabla V(x_0)=\nabla V(x_1)=0$ and $\Delta V(x_0)$ and $\Delta V(x_1)$ are positively definite.

    \end{property}
    The Onsager-Machlup (OM) functional is defined by
    \begin{align*}
    	OM(\gamma)=\int_{0}^T\frac{1}{2}|\dot{\gamma}(t)+\nabla V(\gamma(t))|^2-\sigma\Delta V(\gamma(t))dt.
    \end{align*}
(see, e.g., \cite{durr_onsager-machlup_1978}). Given initial and final points $x_-,x_+$ (e.g. $x_-=x_0,x_+=x_1$) in a bounded domain of $\mathbb{R}^{2n}$, the most probable transition path $\gamma:[0,T]\rightarrow\mathbb{R}^n$ connecting $x_-$ and $x_+$ is a solution solving the double minimizing problem
\begin{align}\label{sec2:28}
	\inf_{T>0}\inf_{\gamma\in C_{\pm}[0,T]}OM(\gamma),
\end{align}
   where $C_{\pm}[0,T]=\{\gamma\in C([0,T],\mathbb{R}^n)\ |\ \gamma(0)=x_-,x(T)=x_+\}$ is the set of absolutely continuous functions connecting $x_\pm$.
Since
    \begin{equation}\label{sec1:21}
       \begin{aligned}
    OM(\gamma)=&\int_{0}^T\frac{1}{2}|\dot{\gamma}(t)|^2+\dot{\gamma}(t)\cdot\nabla V(\gamma(t))-U(\gamma(t))dt\\
    =&\int_{0}^T\frac{1}{2}|\dot{\gamma}(t)|^2-U(\gamma(t))dt+V(x_+)-V(x_-),
\end{aligned} 
    \end{equation}
without loss of generality, we assume $V(x_+)=V(x_-)$ and denote by
\begin{align}\label{sec1:1}
    L(x,v)=\frac{1}{2}|v|^2-U(x).
\end{align}
Then $OM(\gamma)=\int_0^TL(\gamma,\dot{\gamma})dt$ is a Lagrangian functional. In general, the existence of a solution to problem \eqref{sec2:28} is not guaranteed; we will return to this point after Theorem \ref{sec2:29}.

\subsection{The free-time Lagrangian action functional}
	
In order to facilitate the analysis, we consider $OM(\gamma)$ as a Lagrangian functional defined on a Hilbert manifold.

Let $W^{1,2}([0,T],\mathbb{R}^n)$ be the Hilbert space with Riemannian metric $g_H$. Fix $x_-,x_+\in\mathbb{R}^n$, we denote by $\mathbb{I}=[0,1]$ and
	\begin{align*}
	W_{x_\pm}^{1,2}(T\mathbb{I},\mathbb{R}^n):=\{x\in W^{1,2}(T\mathbb{I},\mathbb{R}^n)\ |\ 
		\gamma(0)=x_-,\gamma(T)=x_+\}.
	\end{align*}
	This set is a smooth submanifold of $W^{1,2}(T\mathbb{I},\mathbb{R}^n)$, since it is precisely given by the preimage of $(x_-,x_+)\in\mathbb{R}^n\times \mathbb{R}^n$ 
	by the smooth immersion
	\begin{align*}
		\pi: W^{1,2}(T\mathbb{I},\mathbb{R}^n)\rightarrow \mathbb{R}^n,\quad \gamma\mapsto (x(0),x(T)).
	\end{align*}
	Let $\gamma:T\mathbb{I}\rightarrow \mathbb{R}^n$ be a smooth path in 
	$	W_{x_\pm}^{1,2}(T\mathbb{I},\mathbb{R}^n)$, and $\gamma^*T\mathbb{R}^n\rightarrow T\mathbb{I}$ be the pull-back of the tangent bundle $T\mathbb{R}^n$ by $\gamma$. We denote by $W_0^{1,2}(\gamma^*T\mathbb{R}^n)$ the tangent space of $W_{x_\pm}^{1,2}(T\mathbb{I},\mathbb{R}^n)$ at $\gamma$, which consists of $W^{1,2}$-sections of the bundle $\gamma^*T\mathbb{R}^n\rightarrow \mathbb{I}$. Specifically,
	\begin{align*}
		W_0^{1,2}(\gamma^*T\mathbb{R}^n)=\{\xi\in W^{1,2}(\gamma^*T\mathbb{R}^n)\ |\ 
		\xi(0)=0,\xi(T)=0\},
	\end{align*}
where $W^{1,2}(\gamma^*T\mathbb{R}^n)$, as a Hilbert space with norm induced by the inner product $g_H$, is the tangent space of $W^{1,2}(T\mathbb{I},\mathbb{R}^n)$ at $\gamma$.
Note that the associated norm is actually independent of the choice of $\gamma \in W_{x_\pm}^{1,2}(T\mathbb{I},\mathbb{R}^n)$. In this case, we have $W_0^{1,2}(T\mathbb{I},\mathbb{R}^n)\cong W_0^{1,2}(\gamma^*T\mathbb{R}^n)$.

    We rewrite $x(\cdot):=\gamma(T\cdot)\in W_{x_\pm}^{1,2}(\mathbb{I},\mathbb{R}^n)$. The one-to-one correspondence $\mathcal{T}$ between
\begin{align*}
(x,T)\in\mathcal{W}:= W_{x_\pm}^{1,2}(\mathbb{I},\mathbb{R}^n)\times \mathbb{R}^+\quad	 \text{and}\quad \gamma\in\bigcup_{T>0}	W_{x_\pm}^{1,2}(T\mathbb{I},\mathbb{R}^n)
\end{align*}
has the form $\gamma(t)=\mathcal{T}(x,T)(t):=x(T^{-1}t)$.	
Moreover, $\mathcal{W}$ is also a Hilbert-Riemannian manifold with metric $g_{\mathcal{W}}=g_H+dT^2$, where $dT^2$ is the standard Euclidean metric of $\mathbb{R}^+$.

Observe that \begin{align*}
	OM(\gamma)=T\int_{0}^{1}L(x(t),\dot{x}(t)/T)dt
\end{align*}
for $x(t):=\gamma(Tt)$. We rewrite $\mathcal{L}(x,T):=OM(\gamma)$. If the minimizer $\gamma$ of $OM$ is a critical point, then the first variation of $\mathcal{L}$ at $(x,T)$ vanishes: 
\begin{align*}
d\mathcal{L}(x,T)[(\xi,b)]:=d_{[x,T]}\mathcal{L}(x,T)[(\xi,b)]=0
\end{align*}
 for each $(\xi,b)\in W_0^{1,2}(\gamma^*T\mathbb{R}^n)$. This is equivalent to the system
	\begin{equation}\label{sec2:25}
	\begin{cases}
		\frac{d}{dt}\partial_{v}L(x(t),\dot{x}(t)/T)
		=T\partial_xL(x(t),\dot{x}(t)/T),\\
		\partial_vL(x(t),\dot{x}(t)/T)\cdot \dot{x}(t)/T-	L(x,\dot{x}/T)	=0.
	\end{cases}
\end{equation}
The first equation is the Euler-Lagrange equation. We define the energy function $E: T\mathbb{R}^n\rightarrow\mathbb{R}$ associated with $L$ by
\begin{align}
	E(x,v):=\partial_vL(x,v)\cdot v-L(x,v).	
\end{align}
Notice that $\dot{\gamma}(t)=\dot{x}(t)/T$. It is easy to check that $E(\gamma(t),\dot{\gamma}(t))$ is constant if $\gamma$ satisfies the first equation in \eqref{sec2:25}. The second equation in \eqref{sec2:25} shows that the most probable transition path $(\gamma(t),\dot{\gamma}(t))$ lies on the energy level set $E^{-1}(0)$.

It is not always the case that the most probable transition path lies in $E^{-1}(0)$. To investigate the relationship between the existence of minimizers and the energy level, we consider a general Lagrangian functional, defined by
 \begin{align}
	\mathcal{L}_k(x,T):=T\int_{0}^{1}L(x(t),\dot{x}(t)/T)+kdt,\label{sec2:3}
\end{align}
where $(x,T)\in W_{x_\pm}^{1,2}(\mathbb{I},\mathbb{R}^n)\times \mathbb{R}^+$ and $k\in\mathbb{R}$ is a constant.

  The Ma\~n\'e critical value of a Lagrangian function $L$ is defined by
   \begin{align}
       c_u(L):=\inf \{k\in\mathbb{R}\ |\ \int_0^TL(\gamma,\dot{\gamma})+kdt\geq 0, \forall \gamma\in \cup_{T>0}C([0,T],\mathbb{R}^n) \}.
   \end{align}
If $L$ is defined by \eqref{sec1:1} and satisfies Property \ref{sec1:5} (1) and (2), we have 
\begin{align*}
    L(x,v)\geq \inf_{x\in\mathbb{R}^n}(-U(x))=-\sup_{x\in\mathbb{R}^n}U(x).
\end{align*}
It follows that
\begin{align}\label{sec1:22}
    c_u(L)=\sup_{x\in\mathbb{R}^n}U(x).
\end{align}

\begin{rmk}
The Ma\~n\'e critical values have also an equivalent Hamiltonian definition; see \cite[Remark 1.4]{asselle_existence_2016} for a general definition of critical value for boundary value problem.
\end{rmk}

	\subsection{Minimizers of Lagrangian functionals}
It is well-known that the Lagrangian functional $\mathcal{L}_k$ is smooth on $W_{x_\pm}^{1,2}(\mathbb{I},\mathbb{R}^n)\times \mathbb{R}^+$; see, e.g. \cite[Section 3]{abbondandolo_smooth_2009-3}.
A local minimizer of the functional $\mathcal{L}_k$ is defined as a pair $(x_*,T_*)\in W_{x_\pm}^{1,2}(\mathbb{I},\mathbb{R}^n)\times \mathbb{R}^+$ such that 
\begin{align*}
    \mathcal{L}_k(x,T)\geq \mathcal{L}_k(x_*,T_*),\quad \forall (x,T)\in \mathcal{U}
\end{align*}
for some neighborhood $\mathcal{U}$ of $(x,T)$.

In the variational setting, the existence of critical points typically requires the Palais-Smale condition for the functional. In particular, under suitable assumptions, global minimizers can be obtained via the following result.
\begin{thm}\label{sec2:20}
    Let $X$ be a connected manifold and $f:X\rightarrow\mathbb{R}$ be a $C^2$ function. Suppose that
    \begin{itemize}
        \item the sublevel $[f \leq b]$  is complete for every $b\in\mathbb{R}$;
        \item $f$ is bounded from below;
        \item $f$ satisfies the Palais-Smale condition. 
    \end{itemize}
    Then $f$ admits a global minimizer.
\end{thm}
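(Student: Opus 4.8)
The plan is the standard route in critical point theory: manufacture a minimizing sequence that is at the same time a Palais--Smale sequence, and then extract a convergent subsequence. The only real input beyond soft continuity arguments is Ekeland's variational principle, and the role of the three hypotheses is precisely to make that principle and the subsequent extraction legitimate.

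First I would set $m:=\inf_X f$, which is a finite real number since $f$ is bounded below, fix $b:=m+1$, and pass to the sublevel set $Y:=[f\le b]$ equipped with the distance $d$ induced by the Riemannian metric of $X$; by assumption $(Y,d)$ is a complete metric space, $f$ is continuous on it, and $\inf_Y f=m$. Next, for each integer $k\ge 1$ I would choose $y_k\in Y$ with $f(y_k)\le m+k^{-2}$ and apply Ekeland's variational principle on $(Y,d)$ with $\varepsilon=k^{-2}$ and $\lambda=k^{-1}$, obtaining a point $x_k\in Y$ with
\begin{align*}
&f(x_k)\le f(y_k)\le m+k^{-2},\\
&f(x_k)<f(z)+k^{-1}\,d(x_k,z)\ \ \text{for all }z\in Y\setminus\{x_k\}.
\end{align*}

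Then I would convert the Ekeland inequality into the estimate $\|df(x_k)\|\le k^{-1}$. Because $f(x_k)\to m<b$, for all large $k$ the point $x_k$ lies in the interior of $Y$, so any sufficiently short curve issuing from $x_k$ stays in $Y$; fixing a unit vector $v\in T_{x_k}X$ and a smooth curve $z(t)$ with $z(0)=x_k$, $\dot z(0)=v$ and $d(x_k,z(t))\le t(1+o(1))$ as $t\to 0^+$ (a coordinate segment in a chart will do), the expansion $f(z(t))-f(x_k)=t\,df(x_k)[v]+o(t)$ together with the Ekeland inequality gives $-t\,df(x_k)[v]+o(t)<k^{-1}t(1+o(1))$; dividing by $t$, letting $t\to 0^+$, and then repeating with $-v$ yields $|df(x_k)[v]|\le k^{-1}$, hence $\|df(x_k)\|\le k^{-1}$. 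Finally, since $f(x_k)\to m$ is bounded and $\|df(x_k)\|\to 0$, the Palais--Smale condition provides a subsequence $x_{k_j}$ converging in $X$ to some $x_*$, and continuity of $f$ forces $f(x_*)=\lim_j f(x_{k_j})=m=\inf_X f$, so $x_*$ is a global minimizer (and $df(x_*)=0$ since $df$ is continuous).

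The one step that requires care --- essentially the only obstacle --- is the differentiability argument on a manifold: one must apply Ekeland's principle on the complete metric space $(Y,d)$ rather than on $X$ itself (this is exactly what the completeness-of-sublevels hypothesis buys), and then verify that the Ekeland point $x_k$, being a strict interior near-minimizer, admits enough admissible test curves remaining in $Y$ to extract the bound on $\|df(x_k)\|$; working in a chart around $x_k$ and using straight segments in place of geodesics makes this routine. An equivalent approach avoids Ekeland entirely --- construct a locally Lipschitz pseudo-gradient vector field for $f$, flow a minimizing sequence along its negative, and combine completeness of $[f\le b]$ with the Palais--Smale condition to obtain convergence to a critical point at level $m$ --- but I slightly prefer the Ekeland argument since it sidesteps building the flow. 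I would also remark that connectedness of $X$ is not used in this argument and only figures among the ambient standing hypotheses.
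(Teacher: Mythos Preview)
Your proof is correct. The paper does not actually supply a proof of this theorem: it is stated there as a standard tool from critical point theory and then immediately invoked to deduce the existence of minimizers for $\mathcal{L}_k$, so there is no ``paper's own proof'' to compare against. Your Ekeland-based argument is the canonical one and all the steps are sound; in particular, you correctly identify that Ekeland must be applied on the complete sublevel set $[f\le b]$ rather than on $X$ (which is precisely the role of the first hypothesis, since in the intended application $X=W^{1,2}_{x_\pm}(\mathbb{I},\mathbb{R}^n)\times\mathbb{R}^+$ is not complete), and that the strict inequality $f(x_k)<b$ guarantees $x_k$ is an interior point of the sublevel set so that short curves in all tangent directions remain admissible for the differential estimate. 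Your closing remark that connectedness is not used is also accurate; the hypothesis is superfluous for the conclusion as stated.
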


Recall that a critical point of $\mathcal{L}_k$ is a curve $\gamma=(x,T)\in \mathcal{W}$ such that $d\mathcal{L}_k(x,T)[\xi,b]=0$ for every curve $\xi\in W_0^{1,2}(\gamma^*T\mathbb{R}^n)$ and $b\in\mathbb{R}$. If a minimizer of $\mathcal{L}_k$ is obtained as the limit of a Palais-Smale sequence, it is in fact a critical point. For quadratic Lagrangian functions, or more generally Tonelli Lagrangian functions, critical points are solutions of the Euler-Lagrange equations; see \cite{mane_lagrangian_1997-1}.

Note that $(\mathcal{W},g_{\mathcal{W}})$ is not complete as the part $\mathbb{R}^+$ is not complete with respect to the standard Euclidean metric $dT^2$. However, the sublevel set of $\mathcal{L}_k$ is complete in the following sense:
 \begin{lem}\label{sec2:21}
     If $x_-\neq x_+$, the sublevel set of $\mathcal{L}_k$ in each connected component $\mathcal{N}\subset W^{1,2}_{x_\pm}(\mathbb{I},\mathbb{R}^n)\times\mathbb{R}^+$ is complete.
 \end{lem}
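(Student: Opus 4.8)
The goal is to show that a Cauchy sequence $(x_j,T_j)$ in a sublevel set $[\mathcal{L}_k\le b]\cap\mathcal{N}$ converges in $\mathcal{N}$; the only thing that can go wrong, since $W^{1,2}_{x_\pm}(\mathbb{I},\mathbb{R}^n)$ is complete, is that $T_j\to 0$ or $T_j\to+\infty$, i.e.\ the limit escapes the open factor $\mathbb{R}^+$. So the plan is to rule out both degenerations using the bound $\mathcal{L}_k(x_j,T_j)\le b$ together with $x_-\ne x_+$. First I would write out
\begin{align*}
\mathcal{L}_k(x,T)=\int_0^T\!\Big(\tfrac12|\dot x(s)|^2-U(x(s))+k\Big)\,ds
=\int_0^1\!\Big(\tfrac{1}{2T}|\dot x(t)|^2-T\,U(x(t))+kT\Big)\,dt,
\end{align*}
using the change of variables $s=Tt$. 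Since $U$ is bounded above by $c_u(L)=\sup U$ (Property \ref{sec1:5}(2) and \eqref{sec1:22}), we get the lower bound
\begin{align*}
\mathcal{L}_k(x,T)\ \ge\ \frac{1}{2T}\int_0^1|\dot x(t)|^2\,dt+(k-c_u(L))\,T.
\end{align*}

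The $T\to+\infty$ case: I would split according to the sign of $k-c_u(L)$. If $k>c_u(L)$ the term $(k-c_u(L))T\to+\infty$, contradicting $\mathcal{L}_k\le b$, so $T_j$ stays bounded. If $k\le c_u(L)$ this crude bound is not enough, and here I expect the main obstacle: one must use that $U$ attains its supremum only on a bounded set $K$ (Property \ref{sec1:5}(2)) to improve the bound. The idea is that a path of large length $T$ cannot spend all its time near $K$ without the Dirichlet energy $\frac{1}{2T}\int_0^1|\dot x|^2$ being at least of order $|x_+-x_-|^2/T$ (harmless) but more importantly, outside a neighborhood of $K$ one has $U\le c_u(L)-\delta$ for some $\delta>0$; one then shows the "time spent near $K$" is itself controlled, so that $\int_0^T(-U(x)+k)\,ds\ge \delta\,T-C$ for some constant $C$ depending on the sublevel bound and the geometry near $K$. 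This is the delicate estimate; it is essentially the argument that makes Mañé's critical value relevant, and I would model it on the completeness/coercivity arguments in \cite{asselle_existence_2016,abbondandolo_smooth_2009-3}.

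The $T\to 0$ case: here the Cauchy-Schwarz inequality gives $\int_0^1|\dot x(t)|^2\,dt\ge\big(\int_0^1|\dot x|\,dt\big)^2\ge|x(1)-x(0)|^2=|x_+-x_-|^2>0$, using precisely the hypothesis $x_-\ne x_+$. Combined with the lower bound above, $\mathcal{L}_k(x_j,T_j)\ge \frac{|x_+-x_-|^2}{2T_j}-|c_u(L)-k|\,T_j$, which blows up as $T_j\to 0$, again contradicting $\mathcal{L}_k(x_j,T_j)\le b$. Hence $T_j$ is bounded away from $0$. Putting the two cases together, $T_j$ lies in a compact subinterval $[a,A]\subset\mathbb{R}^+$; since $(x_j,T_j)$ is Cauchy in $\mathcal{W}=W^{1,2}_{x_\pm}\times\mathbb{R}^+$ and both $W^{1,2}_{x_\pm}(\mathbb{I},\mathbb{R}^n)$ and $[a,A]$ are complete, the sequence converges to a limit $(x_*,T_*)$ with $T_*\in[a,A]\subset\mathbb{R}^+$, which lies in the same connected component $\mathcal{N}$ because $\mathcal{N}$ is closed in $\mathcal{W}$ (connected components are closed). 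This proves the sublevel set is complete. I would remark that the hypothesis $x_-\ne x_+$ enters only to exclude the $T\to0$ collapse, and that when $k>c_u(L)$ the whole argument simplifies since coercivity in $T$ is immediate.
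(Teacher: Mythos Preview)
Your treatment of the $T\to 0$ case is correct and matches the paper's proof exactly: the lower bound $\mathcal{L}_k(x,T)\ge \frac{1}{2T}|x_+-x_-|^2 + (k-c_u(L))T$ forces $T$ to be bounded away from zero on any sublevel set, and this is precisely where the hypothesis $x_-\ne x_+$ enters.

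However, your $T\to+\infty$ case is both unnecessary and, in the regime $k\le c_u(L)$, wrong. It is unnecessary because you are testing completeness against \emph{Cauchy} sequences in $(\mathcal{W},g_{\mathcal{W}})$: since the $T$-component carries the standard Euclidean metric $dT^2$, a Cauchy sequence $(T_j)$ in $\mathbb{R}^+$ is automatically bounded, so $T_j\to+\infty$ simply cannot occur. The paper's proof recognizes this and states explicitly that one only has to exclude $T=0$ as a limit point. Your plan instead tries to show that the sublevel set itself is bounded in $T$; but this is false for $k<c_u(L)$: as the paper notes just after Theorem~\ref{sec2:29}, $\mathcal{L}_k$ is unbounded from below in that range, so sublevel sets extend to arbitrarily large $T$. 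Your sketched ``time spent near $K$'' argument would therefore be attempting to establish a false statement, and no refinement along those lines can succeed. Once you drop the $T\to+\infty$ case entirely, your proof coincides with the paper's.
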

 \begin{proof}
 Set $c\in\mathbb{R}$ and sublevel set $[\mathcal{L}_k\leq c]=\{(x,T)\ |\ \mathcal{L}_k(x,T)\leq c\}$. We only have to exclude $T=0$ as the limit point in the set $[\mathcal{L}_k\leq c]$. By the Property \ref{sec1:5}(1) (2),
     \begin{align}
         \mathcal{L}_k(x,T)\geq& T\int_0^1\frac{1}{2T^2}|\dot{x}(t)|^2-c_u(L)+kdt\label{sec2:6}\\
         \geq &\frac{l^2(x_\pm)}{2T}+T(k-c_u(L)),\label{sec2:1}
     \end{align}
     where $l(x_\pm):=\inf\{l(x)\ |\ x(0)=x_-,x(T)=x_+\}$ is the minimum of the length of $x$ connecting $x_-,x_+$. It is positive since $x_-\neq x_+$. Then $T$ must be far away from $0$ if $\mathcal{L}_k(x,T)$ is bounded from above. We conclude that $[\mathcal{L}_k\leq c]\subseteq W_{x_\pm}^{1,2}(\mathbb{I},\mathbb{R}^n)\times \mathbb{R}^+$ is complete for each $c\in\mathbb{R}$.
 \end{proof}
  \begin{rmk}\label{sec2:2}
  If $\{(x_n,T_n)\}$ is a sequence such that $\mathcal{L}_k(x_n,T_n)\rightarrow c$ as $n\rightarrow +\infty$, the inequality \eqref{sec2:1} yields that $T_n$ is bounded away from 0; see cf. {\cite[Corollary 3.6]{capietto_preservation_2006}}. 
   The inequality \eqref{sec2:1} also yields that Lagrangian functional $\mathcal{L}_k$ is bounded from below for $k> c_u(L)$.
  \end{rmk}

The Palais-Smale condition is closely related to the energy level.
 \begin{lem}[cf. {\cite[Lemma 5.3]{abbondandolo_lectures_2013-1}}]\label{sec1:4}
     Let $\{(x_n,T_n)\}$ be a Palais-Smale sequence at level $c$ of $\mathcal{L}_k$ in the space $W_{x_\pm}^{1,2}(\mathbb{I},\mathbb{R}^n)\times \mathbb{R}^+$. If $k>c_u(L)$, there exists a convergent subsequence $(x_{n'},T_{n'})\rightarrow (x_*,T_*)$ such that $T_*>0$.
 \end{lem}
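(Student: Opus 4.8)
The plan is to follow the classical Tonelli argument (as in Abbondandolo's lectures, \cite[Lemma 5.3]{abbondandolo_lectures_2013-1}): establish that a Palais–Smale sequence $\{(x_n,T_n)\}$ at level $c$ is bounded in the $W^{1,2}\times\mathbb{R}^+$ metric and stays away from the boundary $T=0$, then extract a weakly convergent subsequence and upgrade weak to strong convergence using the coercivity of the Lagrangian in the velocity variable. The novel point relative to a fixed-time problem is the presence of the free parameter $T$, so two separate compactness statements are needed: $T_n$ must be bounded both above and below.

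First I would record that $T_n$ is bounded away from $0$. Since $\mathcal{L}_k(x_n,T_n)\to c$, the sequence lies in a sublevel set $[\mathcal{L}_k\le c']$ for some $c'$, and inequality \eqref{sec2:1} from Lemma \ref{sec2:21} gives $\mathcal{L}_k(x_n,T_n)\ge \frac{l^2(x_\pm)}{2T_n}+T_n(k-c_u(L))$; with $l(x_\pm)>0$ this forces $T_n\ge\delta>0$. The same inequality, together with $k>c_u(L)$, gives an upper bound $T_n\le C$. Next, coercivity: from \eqref{sec2:6}, $\mathcal{L}_k(x_n,T_n)\ge \frac{1}{2T_n}\int_0^1|\dot x_n|^2\,dt+T_n(k-c_u(L))$, and since $T_n$ is now pinned in a compact subinterval of $\mathbb{R}^+$, boundedness of $\mathcal{L}_k(x_n,T_n)$ yields a uniform bound on $\|\dot x_n\|_{L^2}$, hence on $\|x_n\|_{W^{1,2}}$ (the endpoints are fixed). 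Passing to a subsequence, $x_n\rightharpoonup x_*$ weakly in $W^{1,2}$ and uniformly in $C^0$, and $T_n\to T_*$ with $T_*\ge\delta>0$.

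The main step — and the expected obstacle — is promoting weak convergence of $x_n$ to strong $W^{1,2}$-convergence, i.e. convergence of the velocities in $L^2$; weak convergence alone does not give a critical point with the right energy. Here I would invoke the Palais–Smale hypothesis: $d\mathcal{L}_k(x_n,T_n)\to 0$ in the cotangent space. Testing $d\mathcal{L}_k(x_n,T_n)$ against the variation $(\xi_n, 0)$ with $\xi_n := x_n - x_*$ (which lies in $W^{1,2}_0$ since the endpoints agree), the dominant term is $\frac{1}{T_n}\int_0^1 \dot x_n\cdot(\dot x_n-\dot x_*)\,dt$, while the remaining terms involve $\nabla U(x_n)$ and are controlled by the $C^0$-bound plus Property \ref{sec1:5}(1)–(2). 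Since $d\mathcal{L}_k(x_n,T_n)[(\xi_n,0)]\to0$ and $\int_0^1\dot x_*\cdot(\dot x_n-\dot x_*)\,dt\to0$ by weak convergence, one deduces $\int_0^1|\dot x_n-\dot x_*|^2\,dt\to0$, i.e. $x_n\to x_*$ strongly in $W^{1,2}$. Combined with $T_n\to T_*>0$, this gives $(x_n,T_n)\to(x_*,T_*)$ in $\mathcal{W}$ with $T_*>0$, which is exactly the claim. The care needed is in handling the $\partial_T$-component of the differential and the nonlinear lower-order terms, but these are routine given the growth conditions in Property \ref{sec1:5}; the role of the hypothesis $k>c_u(L)$ is precisely to keep $T_*$ from degenerating to $0$, since for $k\le c_u(L)$ the functional need not be coercive and a Palais–Smale sequence could escape to $T=0$.
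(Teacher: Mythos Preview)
Your proposal is correct and follows essentially the same approach as the paper (which defers to \cite[Lemma 5.3]{abbondandolo_lectures_2013-1} and spells out the parallel argument in Appendix~\ref{seca:2}): bound $T_n$ above and below via \eqref{sec2:1}, extract uniform and weak $W^{1,2}$ convergence, then test $d\mathcal{L}_k(x_n,T_n)$ against $(x_n-x_*,0)$ to upgrade to strong convergence using the quadratic growth of $L$ in $v$. One small correction to your closing remark: the hypothesis $k>c_u(L)$ is needed for the \emph{upper} bound on $T_n$ (preventing $T_n\to\infty$), not the lower bound; inequality \eqref{sec2:1} already forces $T_n$ away from $0$ for any $k$, since the term $l^2(x_\pm)/(2T_n)$ dominates as $T_n\to0$ (cf.\ Remark~\ref{sec2:2}).
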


 \begin{rmk}
   We refer to \cite[Lemma 5.3]{abbondandolo_lectures_2013-1} for a proof of Palais-Smale condition of a Lagrangian function defined on a compact manifold. See also \cite[Proposition 3.12]{contreras_palais-smale_2006-1} and \cite[Lemma 3.2.2]{asselle_existence_2015}. The only difference between Lemma \ref{sec1:4} and \cite[Lemma 5.3]{abbondandolo_lectures_2013-1} is that the phase space considered here is unbounded. However, the proof of Theorem \ref{sec1:4} is not relevant to the boundedness of the phase space once we consider the Palais-Smale sequence $\{(x_n,T_n)\}$ with graphs in a bounded domain.
\end{rmk}

By Lemmas \ref{sec2:21}, \ref{sec1:4} and Remark \ref{sec2:2}, Theorem \ref{sec2:20} guarantees the following result. 
\begin{thm}\label{sec2:29}
     If $k> c_u(L)$, there is a global minimizer for $\mathcal{L}_k$.
\end{thm}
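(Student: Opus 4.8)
The plan is to obtain the global minimizer as a direct consequence of the abstract Theorem \ref{sec2:20}, applied to $f=\mathcal{L}_k$ on the Hilbert--Riemannian manifold $\mathcal{W}=W^{1,2}_{x_\pm}(\mathbb{I},\mathbb{R}^n)\times\mathbb{R}^+$. Since the fixed-endpoint path space $W^{1,2}_{x_\pm}(\mathbb{I},\mathbb{R}^n)$ is an affine (hence contractible, hence connected) subspace of $W^{1,2}$ and $\mathbb{R}^+$ is connected, $\mathcal{W}$ is itself connected, so it suffices to verify the three hypotheses of Theorem \ref{sec2:20} for $\mathcal{L}_k$ on $\mathcal{W}$, under the standing assumption $x_-\neq x_+$ that is in force for a transition path between two distinct minima (this assumption is genuinely needed here, since for $x_-=x_+$ the constant path with $T\to 0^+$ shows that the infimum need not be attained).

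First I would record the regularity hypothesis: by Property \ref{sec1:5}(1), $V\in C^4$, hence $U\in C^2$ and $L(x,v)=\tfrac12|v|^2-U(x)$ is a $C^2$ Tonelli Lagrangian, so by \cite[Section 3]{abbondandolo_smooth_2009-3} the functional $\mathcal{L}_k$ is (at least) $C^2$ on $\mathcal{W}$. Next, for the boundedness from below and the completeness of the sublevels, I would invoke the estimate \eqref{sec2:1}: for $k>c_u(L)$,
\[
\mathcal{L}_k(x,T)\ \ge\ \frac{l^2(x_\pm)}{2T}+T\bigl(k-c_u(L)\bigr),
\]
whose right-hand side, since $x_-\neq x_+$ forces $l(x_\pm)>0$, is a strictly positive function of $T\in\mathbb{R}^+$ that blows up both as $T\to 0^+$ and as $T\to+\infty$. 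This gives a uniform lower bound for $\mathcal{L}_k$ on all of $\mathcal{W}$, while the completeness of every sublevel $[\mathcal{L}_k\le b]$ in $\mathcal{W}$ is exactly Lemma \ref{sec2:21} (the same estimate excludes $T=0$ as a limit point).

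Finally, for the Palais--Smale condition: given a Palais--Smale sequence $\{(x_n,T_n)\}$ for $\mathcal{L}_k$ at a level $c$, the bound above keeps $T_n$ bounded away from $0$ (Remark \ref{sec2:2}); since $k>c_u(L)$, Lemma \ref{sec1:4} then extracts a subsequence converging to some $(x_*,T_*)$ with $T_*>0$, i.e. an actual point of $\mathcal{W}$, and since $\mathcal{L}_k$ is $C^1$ this limit is a critical point. All hypotheses of Theorem \ref{sec2:20} are thus met, and it delivers a global minimizer $(x_*,T_*)\in\mathcal{W}$ of $\mathcal{L}_k$; being the limit of a minimizing Palais--Smale sequence, it is a critical point of $\mathcal{L}_k$ and hence solves the free-time Euler--Lagrange system \eqref{sec2:25} with the constant $k$ inserted. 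I expect the only genuine difficulty to be the compactness analysis behind Lemma \ref{sec1:4}---controlling a Palais--Smale sequence near the excluded boundary $T=0$ and over the unbounded phase space---but that has already been established, so what remains is precisely the bookkeeping above.
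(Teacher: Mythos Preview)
Your proposal is correct and follows essentially the same approach as the paper: the paper's proof is just the one-line observation that Lemmas \ref{sec2:21}, \ref{sec1:4} and Remark \ref{sec2:2} verify the three hypotheses of Theorem \ref{sec2:20}, which is exactly what you spell out in more detail. Your additional remarks on connectedness, regularity, and the necessity of $x_-\neq x_+$ are accurate elaborations that the paper leaves implicit.
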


This theorem shows that the most probable transition path with finite time interval exists in high energy level set. This global minimizer $(x_k,T_k)$ of $\mathcal{L}_k$ is the so-called graph minimizer. For $k_n>c_u(L)$ and $k_n\rightarrow c_u(L)$, the sequence of graph minimizers $(x_{k_n},T_{k_n})$ converges uniformly to the graph limit $(x_*,+\infty)$, where $x_*$ is an extremal (not necessary a minimizer) of the Lagrangian functional $\mathcal{L}_{c_u(L)}$; see \cite[Theorem 2]{du_graph_2021}.

We note that if $k<c_u(L)$, the functional $\mathcal{L}_k$ is unbounded from below; see, e.g., \cite[Lemma 3.4]{asselle_existence_2016}.
For example, if $V$ satisfies Property \ref{sec1:5}(3), then $0<c_u(L)$. It follows that $\mathcal{L}_0$ is unbounded from below and there is no global minimizer. However, the restriction of the functional on $W^{1,2}_{x_\pm}(\mathbb{I},\mathbb{R}^n)\times(0,\tau]$, denoted by $\mathcal{L}_k^{\leq\tau}$, is bounded from below for each $k\in\mathbb{R}$. The following results hold.

\begin{lem}\label{sec2:22}

(1) The sublevel set $[\mathcal{L}_k^{\leq \tau}\leq c]$ is complete for each $c\in\mathbb{R}$.

(2) The functional $\mathcal{L}_k^{\leq \tau}$ is bounded from below for each $k\in\mathbb{R}$.  

(3) Let $\{(x_n,T_n)\}$ be a Palais-Smale sequence at level $c$ of $\mathcal{L}_k^{\leq \tau}$. There exists a convergent subsequence $(x_{n'},T_{n'})\rightarrow (x_*,T_*)$ such that $0<T_*\leq \tau$.
\end{lem}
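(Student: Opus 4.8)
The plan is to deduce all three assertions from the single a priori estimate \eqref{sec2:1} (equivalently \eqref{sec2:6}) together with the standing constraint $T\le\tau$. The guiding observation is that the cutoff $T\le\tau$ furnishes, for free and for every $k\in\mathbb{R}$, the \emph{upper} bound on the time variable that in Lemma \ref{sec1:4} had to be extracted from the hypothesis $k>c_u(L)$; the \emph{lower} bound $T\ge\delta>0$ on sublevels and Palais-Smale levels still comes, as in Lemma \ref{sec2:21}, from the blow-up of the term $l^2(x_\pm)/2T$ in \eqref{sec2:1} as $T\to0^+$ (here I use, as throughout, that $x_-\ne x_+$, so $l(x_\pm)>0$). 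Thus parts (1)--(2) are routine refinements of Lemma \ref{sec2:21} and Remark \ref{sec2:2}, while part (3) is the analogue of Lemma \ref{sec1:4} for the restricted functional, with the cutoff playing the role of the energy hypothesis.

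For part (2): by \eqref{sec2:6}, $\mathcal{L}_k^{\le\tau}(x,T)\ge \tfrac{1}{2T}\int_0^1|\dot x|^2\,dt+T(k-c_u(L))\ge T(k-c_u(L))\ge\min\{0,\tau(k-c_u(L))\}$ for $T\in(0,\tau]$, which is a finite lower bound independent of $(x,T)$. For part (1): on $[\mathcal{L}_k^{\le\tau}\le c]$, inequality \eqref{sec2:1} and $T\le\tau$ give $l^2(x_\pm)/2T\le c-T(k-c_u(L))\le c+\tau|k-c_u(L)|$, hence $T\ge\delta>0$; therefore $[\mathcal{L}_k^{\le\tau}\le c]$ is a closed subset — by continuity of $\mathcal{L}_k$ — of the complete space $W^{1,2}_{x_\pm}(\mathbb{I},\mathbb{R}^n)\times[\delta,\tau]$, and I conclude completeness exactly as in the proof of Lemma \ref{sec2:21}.

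Part (3) is the substantive point, and here I would run the classical Tonelli-Lagrangian scheme of \cite[Lemma 5.3]{abbondandolo_lectures_2013-1}. For $n$ large one has $\mathcal{L}_k^{\le\tau}(x_n,T_n)\le c+1$, so \eqref{sec2:1} and $T_n\le\tau$ give $\delta\le T_n\le\tau$, and after passing to a subsequence $T_n\to T_*\in[\delta,\tau]$; the same bound via \eqref{sec2:6} bounds $\int_0^1|\dot x_n|^2\,dt$ by $2\tau(c+1+\tau|k-c_u(L)|)$, so $\{x_n\}$ is bounded in $W^{1,2}$, and since the endpoints are fixed this also bounds $\|x_n\|_{C^0}$, confining every curve to one compact set $K\subset\mathbb{R}^n$. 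By the compact embedding $W^{1,2}(\mathbb{I},\mathbb{R}^n)\hookrightarrow C^0(\mathbb{I},\mathbb{R}^n)$ a further subsequence satisfies $x_n\rightharpoonup x_*$ weakly in $W^{1,2}$ and $x_n\to x_*$ uniformly. I then upgrade to strong $W^{1,2}$-convergence: testing $d\mathcal{L}_k^{\le\tau}(x_n,T_n)$ against $(\xi_n,0)$ with $\xi_n:=x_n-x_*$ (an admissible interior direction in the $T$-variable, with $\|\xi_n\|_{W^{1,2}}$ bounded), the lower-order contribution $-T_n\int_0^1\nabla U(x_n)\cdot\xi_n\,dt$ is $O(\|\xi_n\|_{C^0})\to0$ because $\nabla U$ is bounded on $K$ (Property \ref{sec1:5}(1),(2)), so the Palais-Smale condition forces $\tfrac{1}{T_n}\int_0^1\dot x_n\cdot(\dot x_n-\dot x_*)\,dt\to0$; together with $T_n\to T_*>0$ and the weak $L^2$-convergence $\dot x_n\rightharpoonup\dot x_*$ this yields $\|\dot x_n\|_{L^2}\to\|\dot x_*\|_{L^2}$ and hence $\dot x_n\to\dot x_*$ strongly in $L^2$, i.e. $x_n\to x_*$ in $W^{1,2}$. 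Consequently $(x_n,T_n)\to(x_*,T_*)$ with $0<T_*\le\tau$.

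The one place demanding real care is this weak-to-strong upgrade in combination with the boundary $T=\tau$ of the parameter interval: I must fix the correct notion of Palais-Smale sequence for a $C^1$ functional on the manifold-with-boundary $W^{1,2}_{x_\pm}(\mathbb{I},\mathbb{R}^n)\times(0,\tau]$ and verify that testing against directions tangent only in the $x$-variable (with the $T$-component tending to $0$) remains legitimate when $T_n\to\tau$, and — exactly as flagged in the remark following Lemma \ref{sec1:4} — that the cited argument survives the unboundedness of the ambient phase space, which it does precisely because the a priori bounds trap the whole Palais-Smale sequence in a fixed compact region where $\nabla V$, $\Delta V$ and their derivatives are uniformly controlled.
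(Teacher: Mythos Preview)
Your proof is correct and follows essentially the same route as the paper. Parts (1) and (2) are minor rearrangements of the paper's argument: for (1) the paper instead observes that $W^{1,2}_{x_\pm}(\mathbb{I},\mathbb{R}^n)\times(0,\tau]$ is closed in $W^{1,2}_{x_\pm}(\mathbb{I},\mathbb{R}^n)\times\mathbb{R}^+$ and intersects with the complete set $[\mathcal{L}_k\le c]$ from Lemma \ref{sec2:21}, and for (2) it keeps the $l^2(x_\pm)/2\tau$ term to get the slightly sharper bound $\mathcal{L}_k(x,T)\ge l^2(x_\pm)/2\tau-\tau|k-c_u(L)|$, but your weaker bound suffices. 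For part (3) the paper's Appendix \ref{seca:2} runs the general Tonelli scheme of \cite{abbondandolo_lectures_2013-1}, using the convexity $d_{vv}\tilde L\ge I$ to control $\|\dot x_n-\dot x\|_{L^2}^2$; your upgrade exploits the explicit quadratic form of $L$ to read off $\|\dot x_n\|_{L^2}\to\|\dot x_*\|_{L^2}$ directly, which is a legitimate shortcut in this setting. Your care about testing only in the $(\xi,0)$-direction at the boundary $T=\tau$ is exactly the right point, and the paper handles it the same way.
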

\begin{proof}
	(1) Since the projection $ W^{1,2}_{x_\pm}(\mathbb{I},\mathbb{R}^n)\times\mathbb{R}^+\mapsto \mathbb{R}^+$ is continuous, the subset $W^{1,2}_{x_\pm}(\mathbb{I},\mathbb{R}^n)\times(0,\tau]$ as the preimage of $(0,\tau]$ is closed. The sublevel set $[\mathcal{L}_0^{\leq \tau}\leq c]$ is complete as it is a closed subset of the complete set $[\mathcal{L}_0\leq c]$.
	
	(2) This result is a direct consequence of the inequality \eqref{sec2:1}.
	In fact, for $(x,T)\in W^{1,2}_{x_\pm}(\mathbb{I},\mathbb{R}^n)\times(0,\tau]$, 
		\begin{align*}
\mathcal{L}_k(x,T)\geq \frac{l^2(x_\pm)}{2\tau}-\tau|k-c_u(L)|,\notag
		\end{align*}
which is bounded from below for each $k\in\mathbb{R}$.
	
	(3) The proof of the Palais-Smale condition is similar to that of Lemma \ref{sec1:4}. We verify it in \ref{seca:2}. 
\end{proof}

For the convex quadratic Lagrangian function $L:T\mathbb{R}^n\rightarrow\mathbb{R}$, we have, by the Frank dual, the Hamiltonian function $H:T^*\mathbb{R}^n\rightarrow\mathbb{R}$
\begin{align*}
    H(q,p):=\sup_{v\in T_x\mathbb{R}^n}\{\angles{p}{v}_q-L(q,v)  \}.
\end{align*}
More precisely, $H(q,p)=\angles{p}{v}_q-L(q,v)$, where $v:=v(q,p)$ is defined by $\partial_vL(q,v)=p$. One readily checks that $E(q,v(q,p))=H(q,p)$. It is well-known that the Legendre transform $\Phi_L:T\mathbb{R}^n\rightarrow T^*\mathbb{R}^n$
\begin{align*}
	(q,v)\mapsto (q,\partial_vL(q,v))
\end{align*}
defines a one-to-one correspondence between Euler-Lagrange orbits and Hamiltonian orbits. If an Euler-Lagrange orbit $\gamma$ satisfies the boundary conditions $\gamma(0)=x_-,\gamma(T)=x_+$, then the corresponding Hamiltonian orbit $(\gamma,\partial_vL(\gamma,\dot{\gamma}))$ lies in a connected component $\Omega\subset H^{-1}(0)$ such that 
\begin{align*}
	\Omega\cap T_{x_\pm}^*\mathbb{R}^{2n}\neq \emptyset.
\end{align*}
 We introduce the following critical value
 \begin{align}\label{sec2:26}
	k_0(L;x_\pm):=\inf\{k\in\mathbb{R}\ |\ \Omega\cap T^*_{x_\pm}\mathbb{R}^n\neq \emptyset,\ \exists \text{ component }\Omega\subset H^{-1}(k)\}.
\end{align}
The necessary condition for the existence of an Euler-Lagrange orbit $\gamma$ connecting $x_-,x_+$ is that $E(\gamma,\dot{\gamma})\geq k_0(L;x_\pm)$.
  
\begin{thm}\label{sec2:23}
    If $(x,T)$ is a critical point of $\mathcal{L}_k^{\leq\tau}$ for $T<\tau$, then $k\geq k_0(L;x_\pm)$.
\end{thm}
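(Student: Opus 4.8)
The plan is to exploit the fact that a critical point with $T<\tau$ sits in the interior of the factor $(0,\tau]$, hence is automatically an (unconstrained) critical point of $\mathcal{L}_k$ on $W^{1,2}_{x_\pm}(\mathbb{I},\mathbb{R}^n)\times\mathbb{R}^+$. First I would record the two components of the resulting first-order conditions: varying $x$ gives the Euler--Lagrange equation $\frac{d}{dt}\partial_v L(x,\dot x/T)=T\,\partial_x L(x,\dot x/T)$, exactly as in the first line of \eqref{sec2:25}; and, since $T<\tau$ so that $\partial_T\mathcal{L}_k^{\leq\tau}(x,T)=\partial_T\mathcal{L}_k(x,T)=0$, differentiating $\mathcal{L}_k(x,T)=T\int_0^1\bigl(L(x(t),\dot x(t)/T)+k\bigr)dt$ in $T$ yields
\begin{align*}
\partial_T\mathcal{L}_k(x,T)=\int_0^1\Bigl(L(x,\dot x/T)-\partial_v L(x,\dot x/T)\cdot\tfrac{\dot x}{T}+k\Bigr)dt=\int_0^1\bigl(k-E(x(t),\dot x(t)/T)\bigr)dt=0,
\end{align*}
i.e. $\int_0^1 E(x(t),\dot x(t)/T)\,dt=k$. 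This is the $k$-analogue of the second line of \eqref{sec2:25}: the additive constant $k$ simply shifts the energy condition from level $0$ to level $k$.

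Next I would upgrade this averaged identity to a pointwise one. By the regularity theory for Tonelli (here convex quadratic) Lagrangians quoted after Theorem \ref{sec2:29}, the curve $\gamma=\mathcal{T}(x,T)$, $\gamma(t)=x(t/T)$ on $[0,T]$, is a genuine smooth solution of the Euler--Lagrange equation, and along such a solution $E(\gamma(t),\dot\gamma(t))$ is constant, as already noted in the excerpt. Since $\dot\gamma(t)=\dot x(t/T)/T$, the averaged identity reads $\int_0^1 E(\gamma(Tt),\dot\gamma(Tt))\,dt=k$, and constancy then forces $E(\gamma(t),\dot\gamma(t))\equiv k$ on $[0,T]$. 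Thus $\gamma$ is an Euler--Lagrange orbit on the energy level $E^{-1}(k)$ satisfying $\gamma(0)=x_-$ and $\gamma(T)=x_+$.

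Finally I would transport this to the Hamiltonian side via the Legendre transform $\Phi_L$. The lifted orbit $t\mapsto(\gamma(t),\partial_v L(\gamma(t),\dot\gamma(t)))$ is a Hamiltonian trajectory contained in $H^{-1}(k)$, using the identity $E(q,v(q,p))=H(q,p)$; being a connected (indeed continuous) curve, it lies in a single connected component $\Omega\subset H^{-1}(k)$, and its endpoints lie in the cotangent fibers over $x_-$ and over $x_+$, so $\Omega\cap T^*_{x_\pm}\mathbb{R}^n\neq\emptyset$. Hence $k$ belongs to the set over which the infimum defining $k_0(L;x_\pm)$ in \eqref{sec2:26} is taken, and therefore $k\geq k_0(L;x_\pm)$.

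The argument is essentially bookkeeping, and the only points that need a little care are: (i) justifying that an interior critical point ($T<\tau$) of the restricted functional $\mathcal{L}_k^{\leq\tau}$ is a critical point of $\mathcal{L}_k$ with no boundary contribution from the $T$-variable — immediate because $(0,\tau)$ is open; and (ii) passing from the averaged energy identity to the pointwise identity $E(\gamma,\dot\gamma)\equiv k$, which rests on the $C^2$-regularity of critical points and on conservation of energy along the Euler--Lagrange flow, both already available in the excerpt. Everything after that is a direct application of the definition of $k_0(L;x_\pm)$.
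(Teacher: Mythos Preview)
Your proposal is correct and follows essentially the same approach as the paper: compute $\partial_T\mathcal{L}_k(x,T)=0$ to obtain the averaged energy identity, invoke conservation of $E$ along the Euler--Lagrange orbit to upgrade it to $E\equiv k$, and conclude via the definition of $k_0(L;x_\pm)$. You spell out more carefully the passage through the Legendre transform and the connected-component argument than the paper does (which simply writes ``we then conclude that $k_0(L;x_\pm)\leq k$''), but the underlying argument is identical.
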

\begin{proof}
	By direct calculation, we have
     \begin{align*}
      0=  \frac{\partial\mathcal{L}_k(x,T)}{\partial T}=&\int_0^1\left [k- \partial_vL(x(t),\dot{x}(t)/T)\cdot \dot{x}(t)/T+L(x,\dot{x}/T) \right]dt\\
        =&\int_0^1[k-E(x(t),\dot{x}(t)/T)]dt.
    \end{align*}
   Since $E(\cdot,\cdot)$ is constant along the Euler-Lagrange orbit $\gamma(\cdot):=x(T^{-1}\cdot)$, it follows that $E(x(t),\dot{x}(t)/T)-k$ is constant and hence zero. We then conclude that $k_0(L;x_\pm)\leq k$.
\end{proof}

The global minimizer of $\mathcal{L}_0^{\leq \tau}$ exists only at $T=\tau$ if $V(x)$ satisfies Property \ref{sec1:5}(3). In fact, since $k_0(L;x_\pm)=\max\{ U(x_-), U(x_+) \}>0$ by Property \ref{sec1:5}(3), Lemma \ref{sec2:23} shows that no critical point $(x,T)$ of $\mathcal{L}_0^{\leq \tau}$ occurs at a time $T$ in the interior of $(0, \tau)$. In general, $\mathcal{L}_k^{\leq \tau}$ admits the global minimizer at $T=\tau$ provided $k<k_0(L;x_\pm)$.
In other words, the minimizer of $\mathcal{L}_k^{\leq \tau}$ is the same with that of $\mathcal{L}_k^\tau:=\mathcal{L}_k(\cdot,\tau)$ in the Riemannian-Hilbert manifold $W_{x_\pm}^{1,2}(\mathbb{I},\mathbb{R}^{2n})$. 

	Since the Lagrangian $L$ is quadratic, extremal curves of $\mathcal{L}_k^\tau$ are smooth; see \cite[Corollary 2.2.11]{fathi_weak_2008}. Recall from Lemma \ref{sec2:22}(2) that $\mathcal{L}_0^\tau$ is bounded from below, the minimizer exists and therefore, is a critical point. We conclude it as follows.
\begin{thm}
    The minimizer of $\mathcal{L}_0^\tau$ exists and satisfies the
    Euler-Lagrange equation.
\end{thm}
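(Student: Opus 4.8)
The plan is to obtain the minimizer by the abstract existence result, Theorem \ref{sec2:20}, applied to the function $f=\mathcal{L}_0^\tau$ on the Hilbert manifold $X=W_{x_\pm}^{1,2}(\mathbb{I},\mathbb{R}^n)$, and then to promote that minimizer to a classical solution of the Euler-Lagrange equation using the smoothness of $\mathcal{L}_0^\tau$ together with the regularity of extremals of quadratic Lagrangians.

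First I would verify the hypotheses of Theorem \ref{sec2:20}. Connectedness of $X$ is immediate: fixing any path $x_0$ joining $x_-$ to $x_+$ (e.g.\ the affine segment) gives $X=x_0+W_0^{1,2}(\mathbb{I},\mathbb{R}^n)$, a convex, hence connected, affine subspace of the Hilbert space $W^{1,2}(\mathbb{I},\mathbb{R}^n)$. Completeness of the sublevels $[\mathcal{L}_0^\tau\leq b]$ follows since $X$ is metrically complete and $\mathcal{L}_0^\tau$ is continuous, so $[\mathcal{L}_0^\tau\leq b]$ is a closed subset of $X$; alternatively it is the $T=\tau$ slice of the complete set $[\mathcal{L}_0^{\leq\tau}\leq b]$ from Lemma \ref{sec2:22}(1). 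Boundedness from below is exactly Lemma \ref{sec2:22}(2) with $k=0$ and $T\equiv\tau$.

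The one remaining and slightly more substantial point is the Palais-Smale condition for $\mathcal{L}_0^\tau$. Since $T=\tau$ is now fixed, the delicate behaviour near $T=0$ handled in Lemmas \ref{sec1:4} and \ref{sec2:22}(3) does not occur, and one is left with the classical Palais-Smale property of the fixed-time action $x\mapsto\int_0^1\bigl(\tfrac{1}{2\tau}|\dot x|^2-\tau\,U(x)\bigr)\,dt$ on $W_{x_\pm}^{1,2}(\mathbb{I},\mathbb{R}^n)$: any Palais-Smale sequence is bounded in $W^{1,2}$ because the kinetic term is coercive (the boundary value $x(0)=x_-$ converts control of $\|\dot x\|_{L^2}$ into control of $\|x\|_{W^{1,2}}$) and $-U$ is bounded below by Property \ref{sec1:5}(2); passing to a weakly convergent subsequence, the compact embedding $W^{1,2}\hookrightarrow C^0$ and continuity of $U$ yield a strongly convergent subsequence, exactly as in the references cited for Lemma \ref{sec1:4}.

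With the hypotheses in place, Theorem \ref{sec2:20} produces a global minimizer $x_*\in W_{x_\pm}^{1,2}(\mathbb{I},\mathbb{R}^n)$ of $\mathcal{L}_0^\tau$. Since $\mathcal{L}_0^\tau$ is smooth on this Hilbert manifold, $x_*$ is a critical point, i.e.\ $d\mathcal{L}_0^\tau(x_*)[\xi]=0$ for every $\xi\in W_0^{1,2}(\gamma^*T\mathbb{R}^n)$, which is the weak form of the first equation in \eqref{sec2:25} with $T=\tau$. Finally, because $L$ is convex quadratic, hence Tonelli, every such extremal is smooth\textemdash this is the regularity statement \cite[Corollary 2.2.11]{fathi_weak_2008} invoked just above\textemdash so $x_*$ is a classical solution of $\frac{d}{dt}\partial_vL(x_*,\dot x_*/\tau)=\tau\,\partial_xL(x_*,\dot x_*/\tau)$. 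I expect the only step requiring genuine care to be the Palais-Smale verification, and in particular the bookkeeping that restricting to graphs in a bounded region of $\mathbb{R}^n$ (as in the remark following Lemma \ref{sec1:4}) is harmless here, since Property \ref{sec1:5}(2) already bounds $U$ from above globally.
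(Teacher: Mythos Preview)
Your proposal is correct and follows essentially the same route as the paper: boundedness from below (Lemma \ref{sec2:22}(2)), existence of a minimizer which is then a critical point of the smooth functional, and smoothness of extremals via \cite[Corollary 2.2.11]{fathi_weak_2008}. The paper states this tersely in the paragraph preceding the theorem without isolating the Palais--Smale verification for the fixed-time functional, whereas you spell it out through Theorem \ref{sec2:20}; but the ingredients and logic are the same.
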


Recall from \eqref{sec1:22} that
$k_0(L;x_\pm)\leq c_u(L)$. A general proof of this inequality can be found in \cite[Proposition 3.5]{asselle_existence_2016}.
 However, there exists $L$ such that there is no Euler-Lagrange orbit with energy level $k\in(k_0(L;x_\pm),c_u(L))$; see \cite[Theorem 2]{asselle_existence_2016}.

In this paper, we mainly consider the existence of Lagrangian minimizers in the space $W_{x_\pm}^{1,2}(\mathbb{I},\mathbb{R}^n) \times (0,\tau]$, where $(0,\tau]$ is a fixed time interval.
In order to study the effect of noise intensity on the stability of most probable transition paths, we study the Morse indices of critical points $x$ and $(x,T)$, respectively, in the following two cases: 
(1) $ k_0(L;x_\pm)>0$ and (2) $ k_0(L;x_\pm)<0$.

\section{Morse index of transition path}\label{sec3:32}

The MPTP $(x,T)$ is an Euler-Lagrange orbit satisfying the energy identity $E(x,\dot{x}/T)=0$. In other words, it is a critical point of the functional $\mathcal{L}_0$. The existence of such an orbit requires that $k_0(L;x_\pm)\leq 0$ (see Theorem \ref{sec2:23}). 
Under this assumption, we investigate in this section the Morse index of the critical point $(x,T)$ of $\mathcal{L}_0$.

In the following, we omit the subscript $0$ of $\mathcal{L}_0$ and $\mathcal{L}_0^T$ if there is no ambiguity.
	We denote by 
    \begin{equation}\label{sec3:47}
         \begin{aligned}
		P(t)=\partial_{vv}L\circ(x,\dot{x}/T),\\
		Q(t)=\partial_{xv}L\circ(x,\dot{x}/T),\\
		R(t)=\partial_{xx}L\circ(x,\dot{x}/T),
	\end{aligned}
    \end{equation}
    and $Q^\top(t)=\partial_{vx}L\circ(x,\dot{x}/T)$.
	Given $C^1$-continuous functions $\xi,\eta\in W_{0}^{1,2}(\mathbb{I},\mathbb{R}^n)$, the second variation of 
	$\mathcal{L}^T$ at $x$ is
	\begin{align}\label{sec3:21}
	d^2\mathcal{L}^T(x)[\xi,\eta]:=\int_{0}^{1}\angles{-\frac{d}{dt}\left[\frac{1}{T}P(t)\dot{\xi}+Q(t)\xi\right]+	Q^\top(t)\dot{\xi}+TR(t)\xi}{\eta} dt.
	\end{align}
  Therefore, a $C^1$-continuous function $\xi\in\ker d^2\mathcal{L}^T(x)$ if and only if $\xi$ satisfies the following Strum system
	\begin{align}\label{sec3:49}
		-\frac{d}{dt}\left(\frac{1}{T}P(t)\dot{\xi}(t)+Q(t)\xi\right)+Q^\top(t)\dot{\xi}+TR(t)\xi=0\text{ with }
		\xi(0)=0=\xi(1).
	\end{align}

We denote by
\begin{align*}
	\bar{L}(t)=\partial_xL(x(t),\dot{x}(t)/T),\quad \kappa(t)=\angles{ {P}(t)\dot{x}(t)}{\dot{x}(t)}
\end{align*}
 and $0\neq b,d\in \mathbb{R}$.
Differentiating both sides of the identity $E(x(t),\dot{x}(t)/T)=0$ with respect to $t$ yields
\begin{align}\label{sec3:48}
	P(t)\ddot{x}(t)/T+Q^\top(t)\dot{x}(t)/T=\bar{L}(t).
\end{align} 
We now compute the second variation of 
$\mathcal{L}$ at $(x,T)\in W_{x_\pm}^{1,2}(\mathbb{I},\mathbb{R}^n)\times \mathbb{R}^+$:
\begin{equation}\label{sec3:20}
    \begin{aligned}
		&d^2\mathcal{L}(x,T)[( {\xi},b),( {\eta},d)]\\
		=&\int_{0}^{1}\angles{-\frac{d}{dt}\left[\frac{1}{T} {P}(t)\dot{ {\xi}}+ {Q}(t) {\xi}\right]+ {Q}^\top(t)\dot{ {\xi}}+T {R}(t)
			 {\xi} }{ {\eta}}dt\\
		&+\int_{0}^{1}\left\{-\frac{1}{T^2}\angles{ {P}(t)\dot{x}}{\dot{ {\eta}}}\cdot b-\frac{1}{T^2}\angles{ P(t)\dot{x}}{\dot{ {\xi}}}\cdot d\right.\\
		&\left.+\angles{\bar{L}(t)-\frac{1}{T} {Q}(t)\dot{x}}{ {\xi}}\cdot d+\angles{\bar{L}(t)-\frac{1}{T} {Q}^\top(t)\dot{x}}{ {\eta}}\cdot b+\frac{1}{T^3}\kappa(t) b\cdot d\right\}dt.
	\end{aligned}	
\end{equation}
Using identity \eqref{sec3:48} we obtain a simplified expression
\begin{equation}
	\begin{aligned}
		&d^2\mathcal{L}(x,T)[( {\xi},b),( {\eta},d)]\\
		=&\int_{0}^{1}\angles{-\frac{d}{dt}\left[\frac{1}{T} {P}(t)\dot{ {\xi}}+ {Q}(t) {\xi}\right]+ Q^\top(t)\dot{ {\xi}}+T {R}(t)
			{\xi} }{ \eta}dt\\
		&+\int_{0}^{1}\left\{\frac{1}{T^2}\angles{ \dot{P}(t)\dot{x}}{ \eta}\cdot b+\frac{1}{T^2}\angles{ \dot{P}(t)\dot{x}}{ \xi}\cdot d+\frac{1}{T^3}\kappa(t) b\cdot d\right\}dt.
	\end{aligned}	
\end{equation}
To ensure the non-degeneracy of the Hessian $d^2\mathcal{L}^T(x)$, we consider a modification of $d^2\mathcal{L}^T(x)$ with a regularization term:
\begin{align}\label{sec3:27}
		\mathcal{I}^T_{r}[\xi,\eta]&=d^2\mathcal{L}^T(x)[\xi,\eta]+\frac{r}{T}\int_{0}^{1}P(t)\xi(t)\cdot\eta(t)dt.
\end{align}
Similarly, we define \begin{align}\label{sec3:28}
		\mathcal{I}_r[(\xi,b),(\eta,d)]&=d^2\mathcal{L}(x,T)[(\xi,b),(\eta,d)]+\frac{r}{T}\int_{0}^{1} {P}(t)\xi(t)\cdot\eta(t)+\frac{r}{T^3}\kappa(t)b\cdot d\ dt.		
\end{align}
Here $\mathcal{I}^T_r$ and $\mathcal{I}_{r}$ are two Fredholm operators associated to $(x,T)$ and are defined on $W_{0}^{1,2}(\mathbb{I},\mathbb{R}^n)$ and $W_{0}^{1,2}(\mathbb{I},\mathbb{R}^n)\times\mathbb{R}$ respectively.

\begin{prop}\cite{portaluri_linear_2022}\label{sec3:30}
	There exists $r_0>0$ such that $\mathcal{I}_r, \mathcal{I}^T_{r}$ are non-degenerate for each $r\geq r_0$.
\end{prop}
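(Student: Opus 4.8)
The plan is to recognize each of $\mathcal{I}^T_r$ and $\mathcal{I}_r$ as a bounded self-adjoint Fredholm operator on the appropriate Hilbert space (via the Riesz representation of the bilinear form) of the shape ``invertible leading term plus compact perturbation,'' and then to exploit the explicit regularizing term $\tfrac{r}{T}\int_0^1 P(t)\xi\cdot\eta\,dt$ (resp.\ together with $\tfrac{r}{T^3}\int_0^1\kappa(t)\,b\cdot d\,dt$) to push the whole spectrum away from $0$ once $r$ is large. Concretely, I would first observe that, after integrating by parts in \eqref{sec3:21} and \eqref{sec3:27}, the form $\mathcal{I}^T_r$ splits as
\begin{align*}
\mathcal{I}^T_r[\xi,\eta]=\tfrac1T\int_0^1 P(t)\dot\xi\cdot\dot\eta\,dt+\tfrac{r}{T}\int_0^1 P(t)\xi\cdot\eta\,dt+\int_0^1\big(\text{lower-order terms in }\xi,\dot\xi\big)\cdot\eta\,dt,
\end{align*}
where the first two terms, using Property \ref{sec1:5} together with convexity of $L$ in $v$ (so $P(t)=\partial_{vv}L=\mathrm{Id}$ here, in fact, since $L(x,v)=\tfrac12|v|^2-U(x)$), define an equivalent inner product on $W^{1,2}_0(\mathbb{I},\mathbb{R}^n)$ with lowest ``eigenvalue'' comparable to $\tfrac{r}{T}$ times a positive constant, while the remaining cross terms $\int Q\,\dot\xi\cdot\eta$, $\int TR\,\xi\cdot\eta$, etc., are bounded forms whose associated operators are \emph{compact} (they factor through the inclusion $W^{1,2}_0\hookrightarrow C^0$ or $W^{1,2}_0\hookrightarrow L^2$, which is compact on the bounded interval $\mathbb{I}$). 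Hence $\mathcal{I}^T_r$ corresponds to a positive operator minus a compact operator whose norm is independent of $r$; choosing $r_0$ so that $\tfrac{r_0}{T}\cdot c_P$ exceeds that fixed compact-operator norm makes $\mathcal{I}^T_r$ strictly positive, in particular non-degenerate, for all $r\ge r_0$.

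For $\mathcal{I}_r$ on $W^{1,2}_0(\mathbb{I},\mathbb{R}^n)\times\mathbb{R}$ I would argue the same way but paying attention to the extra finite-dimensional $\mathbb{R}$-direction: the block-diagonal ``principal part'' now includes $\tfrac{r}{T^3}\kappa(t)$ integrated against $b\cdot d$, and by the definition $\kappa(t)=\angles{P(t)\dot x}{\dot x}=|\dot x(t)|^2$ this is strictly positive provided $\dot x\not\equiv 0$ (which holds since $x_-\neq x_+$). The coupling terms between the $(\xi)$-block and the $b$-block, namely $\tfrac{1}{T^2}\int_0^1\angles{\dot P\dot x}{\eta}b$ and its transpose, are again bounded bilinear forms factoring through a compact inclusion, hence compact as operators; and the internal $\xi$-block is exactly $\mathcal{I}^T_r$ handled above. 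So $\mathcal{I}_r$ is again (uniformly-in-$r$ bounded compact) plus (principal part with lowest eigenvalue $\gtrsim r\cdot\mathrm{const}$ in every coordinate direction), which is non-degenerate once $r\ge r_0$ after possibly enlarging $r_0$. Finally, the Fredholm property itself is immediate from the decomposition ``invertible $+$ compact,'' so the claim that these are ``two Fredholm operators'' is covered simultaneously.

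The main obstacle I anticipate is purely bookkeeping rather than conceptual: one must verify carefully that \emph{every} term other than the two explicitly positive quadratic forms $\int P\dot\xi\cdot\dot\eta$, $\int P\xi\cdot\eta$ (and their $b$–$d$ analogue) gives rise to a \emph{compact} operator with norm bounded \emph{uniformly in $r$} — in particular that no hidden $r$-dependence or $T$-blow-up sneaks in — so that a single $r_0$ works; and one must confirm $\kappa(t)>0$ on a set of positive measure, i.e.\ $\dot x\not\equiv0$, to control the $\mathbb{R}$-direction. All of this follows from the $C^4$-regularity of $V$ in Property \ref{sec1:5}(1), the compactness of the Sobolev embedding $W^{1,2}(\mathbb{I})\hookrightarrow L^2(\mathbb{I})$, and $x_-\neq x_+$; since the statement is attributed to \cite{portaluri_linear_2022}, I would cite that reference for the detailed estimates and present only this structural argument.
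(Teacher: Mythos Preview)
Your proposal is correct, and in fact it goes beyond what the paper itself does: the paper gives no proof at all for this proposition, merely attributing it to \cite{portaluri_linear_2022} in the statement. Your structural argument (principal positive part whose lowest eigenvalue grows like $r$ plus an $r$-independent compact remainder, compactness via the Rellich embedding $W^{1,2}_0(\mathbb{I})\hookrightarrow L^2(\mathbb{I})$, and positivity of $\int_0^1\kappa(t)\,dt$ from $x_-\neq x_+$) is precisely the standard route, and your concluding sentence about citing the reference for the detailed estimates already matches the paper's treatment.

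One simplifying observation you make parenthetically but could promote: in the specific OM Lagrangian $L(x,v)=\tfrac12|v|^2-U(x)$ of this paper one has $P\equiv I_n$, hence $\dot P=0$ and $B=0$ in \eqref{sec3:29}; the paper notes this explicitly just after \eqref{sec3:29}. Thus $I(r)$ is block-diagonal, the $(\xi)$-block and the $b$-block decouple completely, and the bookkeeping you flag as the main obstacle essentially disappears.
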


We denote by $I(r)$ the representation  of $\mathcal{I}_r $ under the $W_0^{1,2}(\mathbb{I},\mathbb{R}^n)\times \mathbb{R}$-inner product. Namely,
\begin{align}
	\mathcal{I}_r[(u,b),(v,d)]=\angles{I(r)\left[ \begin{matrix}
			u\\
			b
		\end{matrix} \right]}{\left[ \begin{matrix}
		v\\
		d
		\end{matrix} \right]}.
\end{align}
Similarly, we denote by $\mathcal{I}^T_{r}[u,v]=\angles{A(r)u}{v}$. The relation between $I(r)$ and $A(r)$ is that
\begin{equation}\label{sec3:29}
	\begin{aligned}
		I(r)=\left[\begin{matrix}
			A(r)& B\\
			B^*& C(r)
		\end{matrix}\right],
	\end{aligned}
\end{equation}
 which is directly derived from the calculation of $d^2\mathcal{L}(x,T)[({\xi},b),({\eta},d)]$ and $d^2\mathcal{L}^T(x)[\xi,\eta]$ in \eqref{sec3:20} and \eqref{sec3:21}.
Here $\angles{Bb}{{\eta}}=\frac{1}{T^2}\int_{0}^{1}\angles{\dot{P}(t)\dot{x}}{ \eta}\cdot b\ dt$ and 
$$\angles{C(r)b}{d}=(1+r)\frac{1}{T^3}\int_0^1\kappa(t)dt b\cdot d=(1+r)\frac{1}{T^3}\int_0^1\angles{P(t)\dot{x}(t)}{\dot{x}(t)}dt b\cdot d.$$ 

In particular, if we consider the Lagrangian functional defined by the quadratic Lagrangian function \eqref{sec1:1}, then $P(t)=I_{n}$ and $B=0$. Therefore, $I(r)$ is a block diagonal matrix.

\subsection{Spectral flow}
Let $\mathcal{CF}^{sa}(E)$ be a set consisting of self-adjoint Fredholm operators defined on the Hilbert space $E$. Given a continuous family of self-adjoint Fredholm operators 
$w=\{w_\lambda,\lambda\in[0,1]\}\subset \mathcal{CF}^{sa}(E)$, the spectral flow is defined as the net number of eigenvalues of $w_0$ crossing from negative to positive as the parameter $\lambda$ travels along $[0,1]$. 
 The spectral flow is invariant for homotopy paths of $w$ with fixed end points.

In the following, we assume that $w_{(\cdot)}:[0,1]\rightarrow \mathcal{CF}^{sa}(E)$ is a continuously differentiable path. 
A crossing instant of $w$ is a point $\lambda_*\in[0,1]$ such that $\ker w_{\lambda_*}\neq 0$. The associated crossing form at the crossing instant $\lambda_*$ is the quadratic form defined by
\begin{equation}\label{sec3:43}
    \begin{aligned}
    \Gamma(w,\lambda_*):\ker w_{\lambda_*}\rightarrow \mathbb{R},\quad 
     \Gamma(w,\lambda_*)[u]=\angles{\tfrac{d}{d\lambda}|_{\lambda=\lambda_*}(w_{\lambda})u}{u}_E.
\end{aligned}
\end{equation}
Moreover, a crossing $\lambda_*$ is called regular if the crossing form $\Gamma(w,\lambda_*)$ is non-degenerate.

If there are only regular crossing instants of the path $w$ along $[0,1]$, we have a precise formula for the spectral flow:
\begin{equation}\label{sec3:40}
    \begin{aligned}
    sf(w_\lambda,\lambda\in[0,1])=&-m^-(\Gamma(w,0))
    +\sum_{0<\lambda_*<\lambda}\sign(\Gamma(w,\lambda_*))\\
    &+m^+(\Gamma(w,1)),
\end{aligned}
\end{equation}
where $\sign(\cdot)=m^+(\cdot)-m^-(\cdot)$.

The spectral flow is invariant for a homotopy of fixed end paths of self-adjoint Fredholm operators. In particular, we have the following result.
	\begin{thm}\label{sec3:8}
Let $(r,s)\in [a,b]\times[c,d]$ and $w:(r,s)\mapsto w_{r,s}\in\mathcal{CF}^{sa}(E)$ be a family of self-adjoint Fredholm operators. Suppose $w_{r,s}$
	is continuous on $[a,b]\times[c,d]$. The spectral flow of
		$\{w_{r,d},r\in[a,b]\}$ 
		satisfies the following property
		\begin{align*}
			sf(w_{r,d},r\in[a,b])
			=sf(w_{b,s},s\in 
			[c,d])+sf(w_{r,c},r\in[a,b])-sf(w_{a,s},s\in [c,d]).
		\end{align*}		
	\end{thm}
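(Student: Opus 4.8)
The plan is to exploit the homotopy invariance of the spectral flow together with its additivity under concatenation of paths. Consider the boundary of the rectangle $[a,b]\times[c,d]$, traversed so that it bounds a contractible region inside the parameter space on which $w$ is everywhere a continuous family of self-adjoint Fredholm operators. Concretely, I would form the closed loop obtained by concatenating the four edges: the bottom edge $s=c$ from $r=a$ to $r=b$, the right edge $r=b$ from $s=c$ to $s=d$, the top edge $s=d$ from $r=b$ back to $r=a$ (i.e. in the reversed orientation), and the left edge $r=a$ from $s=d$ back to $s=c$. Since the spectral flow is additive under concatenation of paths sharing endpoints, the spectral flow along this loop equals
\begin{align*}
sf(w_{r,c},r\in[a,b])+sf(w_{b,s},s\in[c,d])+sf(w_{r,d},r:b\to a)+sf(w_{a,s},s:d\to c).
\end{align*}

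The next step is to use two standard facts about the spectral flow: first, reversing the orientation of a path negates its spectral flow, so $sf(w_{r,d},r:b\to a)=-sf(w_{r,d},r\in[a,b])$ and likewise $sf(w_{a,s},s:d\to c)=-sf(w_{a,s},s\in[c,d])$; second, the spectral flow of a loop that is null-homotopic within $\mathcal{CF}^{sa}(E)$ through fixed-endpoint homotopies vanishes. Here the loop is contractible because the rectangle $[a,b]\times[c,d]$ is a contractible domain and $w$ is continuous on all of it, so the standard shrinking homotopy contracts the boundary loop to the constant path at the corner $w_{a,c}$ while remaining in $\mathcal{CF}^{sa}(E)$ throughout. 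Hence the total spectral flow around the loop is zero, and rearranging the displayed identity yields exactly
\begin{align*}
sf(w_{r,d},r\in[a,b])=sf(w_{b,s},s\in[c,d])+sf(w_{r,c},r\in[a,b])-sf(w_{a,s},s\in[c,d]),
\end{align*}
which is the claimed formula.

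The main obstacle is to justify rigorously that the boundary loop has vanishing spectral flow, i.e. that the homotopy invariance I invoked applies in this two-parameter setting. Since $w$ is only assumed continuous (not differentiable) on the rectangle, one cannot directly use the crossing-form formula \eqref{sec3:40}; instead I would appeal to the definition of spectral flow for merely continuous paths and its homotopy invariance with fixed endpoints, which is exactly the property quoted just before the statement of Theorem \ref{sec3:8}. The homotopy $H(\theta,\cdot)$ shrinking the boundary loop toward the corner is built coordinatewise by linearly contracting the rectangle's boundary onto the point $(a,c)$ and composing with $w$; one checks that $H$ is continuous in $(\theta,\cdot)$, takes values in $\mathcal{CF}^{sa}(E)$, and fixes the (common) endpoint, so that homotopy invariance gives $sf(\text{loop})=sf(\text{constant path})=0$. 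The remaining bookkeeping — orientation reversal changes sign, concatenation adds — is routine and needs no computation.
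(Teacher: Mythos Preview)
Your proposal is correct and follows exactly the route the paper indicates: the paper does not give a detailed proof of Theorem~\ref{sec3:8} but simply states it as a consequence of the homotopy invariance of spectral flow (the sentence immediately preceding the theorem). Your argument via concatenation around the rectangle boundary, orientation reversal, and contractibility is the standard way to make that implication precise, so you have supplied the details the paper omits.
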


\begin{defi}\label{sec3:19}
		Let $(x,T)\in W_{x_\pm}^{1,2}(\mathbb{I},\mathbb{R}^n)\times\mathbb{R}^+$ be an extremal of the Lagrangian functional $\mathcal{L}$. Let $I(r)$ and $A(r)$ be representations of Fredholm operators associated with $(x,T)$ and $x$. We denote by
		\begin{align*}
			sf(I(r),r\in[0,r_0]),\quad sf(A(r),r\in[0,r_0])
		\end{align*}
        the spectral flow of operators $\mathcal{I}_r$ and $\mathcal{I}_{T,r}$.
	\end{defi}
\begin{rmk}
   We assume that the Lagrangian function $L$ is convex in the sense that $\partial_{vv}L(x,v)\geq cI_n$ for some constant $c>0$. Under this assumption, the self-adjoint operators $I(r)$ and $A(r)$ have only finitely many negative eigenvalues. It can be proved that, for $r_0$ large enough, both $I(r)$ and $A(r)$ are positive definite. As a consequence, there are at most finitely many crossing instants in the interval $r\in[0,r_0]$. Thus the definition of spectral flows in Definition \ref{sec3:19} are well-defined.
\end{rmk}

    Let $m^-(x,T),\ m^-_T(x) $ be Morse indices of the critical points $(x,T)$ and $x$, defined as the numbers of negative eigenvalues of Hess$\mathcal{L}(x,T)$ and Hess$\mathcal{L}^T(x)$. The following results hold.
	\begin{prop}\label{sec3:26}
  Let $(x,T)$ be a critical point of the functional \eqref{sec2:3}.  For $s_0$ large enough,
		\begin{align}
		sf(I(r) ,r\in[0,r_0])&=m^-(x,T),\label{sec3:51}\\
        sf(A(r),r\in[0,r_0])&=m_T^-(x).\label{sec3:52}
		\end{align}
	\end{prop}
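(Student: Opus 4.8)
The plan is to identify the spectral flow $sf(I(r),r\in[0,r_0])$ with the Morse index $m^-(x,T)$ by tracking how many eigenvalues of the operator family $I(r)$ cross zero as $r$ runs from $0$ to $r_0$, and then reading off both endpoints. The key structural facts are already in place: by Proposition \ref{sec3:30} the operators $\mathcal{I}_r$ and $\mathcal{I}^T_r$ are non-degenerate for every $r\geq r_0$, so $r=r_0$ is not a crossing instant, and by the convexity remark preceding Proposition \ref{sec3:26} the operators $I(r)$ and $A(r)$ become positive definite once $r$ is large. Combining these, I may choose $r_0$ large enough that $I(r)$ and $A(r)$ are positive definite for all $r\geq r_0$; hence their Morse indices there are $0$, and by the general formula \eqref{sec3:40} the spectral flow over $[0,r_0]$ equals (minus) the signed count of crossings plus $m^+$ at the right endpoint, the latter vanishing since there is no kernel at $r_0$.

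The main point is the monotonicity of the perturbation. Inspecting \eqref{sec3:27} and \eqref{sec3:28}, the regularization adds $\tfrac{r}{T}\int_0^1 P(t)\xi\cdot\eta\,dt$ to $d^2\mathcal{L}^T(x)$ and additionally $\tfrac{r}{T^3}\int_0^1\kappa(t)\,b\cdot d\,dt$ to $d^2\mathcal{L}(x,T)$; since $\partial_{vv}L=P\geq cI_n>0$ and $\kappa(t)=\angles{P(t)\dot x(t)}{\dot x(t)}\geq 0$ (with $\kappa\not\equiv 0$ when $x$ is nonconstant, which holds as $x_-\neq x_+$), the derivative $\tfrac{d}{dr}I(r)$ is a non-negative operator, and is in fact positive-definite on the relevant finite-dimensional subspaces. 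Therefore at every crossing instant $r_*\in[0,r_0)$ the crossing form $\Gamma(I(\cdot),r_*)[u]=\angles{\tfrac{d}{dr}|_{r_*}I(r)u}{u}$ is positive definite on $\ker I(r_*)$: every crossing is regular and every eigenvalue crosses from negative to positive, so each crossing contributes $+\dim\ker I(r_*)$ to the spectral flow and, equivalently, reduces the Morse index of $I(r)$ by exactly $\dim\ker I(r_*)$ as $r$ increases past $r_*$. Summing these contributions, $sf(I(r),r\in[0,r_0])=m^-(I(0))-m^-(I(r_0))=m^-(I(0))-0$, and $m^-(I(0))=m^-(x,T)$ since $I(0)$ represents $\mathrm{Hess}\,\mathcal{L}(x,T)$. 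The argument for $A(r)$ and $m^-_T(x)$ is verbatim the same, using only the $P$-term of the perturbation. One should note that the ``$s_0$ large enough'' in the statement is the same threshold $r_0$; I will phrase everything in terms of $r_0$.

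The step I expect to be the genuine obstacle is the verification that the Fredholm-operator path $r\mapsto I(r)$ is $C^1$ in the appropriate sense and that the abstract crossing-form formula \eqref{sec3:40} applies, i.e.\ that all crossings in $[0,r_0]$ are isolated and regular — this is where I need the convexity hypothesis to guarantee only finitely many negative eigenvalues and hence finitely many crossings, and the non-degeneracy at $r_0$ to close the interval cleanly. A secondary technical point is the relation \eqref{sec3:29}: because $\mathcal{L}$ decomposes with a block structure $I(r)=\bigl[\begin{smallmatrix}A(r)&B\\ B^*&C(r)\end{smallmatrix}\bigr]$, one must be careful that the spectral flow of the full operator is computed on $W_0^{1,2}\times\mathbb{R}$ and not mistakenly split as $sf(A(r))+\mathrm{(scalar\ contribution)}$; in the quadratic case \eqref{sec1:1} one has $B=0$ and the split is literal, but in general the identification $sf(I(r))=m^-(x,T)$ must be proved directly from the endpoint Morse indices via monotonicity, exactly as above, rather than by decomposing the flow.
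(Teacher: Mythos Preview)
Your proposal is correct and reaches the same conclusion as the paper, but the mechanism differs slightly. The paper's proof replaces $\{I(r)\}$ by a small $C^1$-perturbation $\{\tilde I(r)\}$ (with the same endpoints) having only regular crossings, invokes homotopy invariance of the spectral flow, and then reads off $sf=m^-(x,T)$ from the fact that $\tilde I(r_0)=I(r_0)$ is positive definite while $\tilde I(0)=I(0)$ has Morse index $m^-(x,T)$. You instead compute $\tfrac{d}{dr}\mathcal{I}_r$ directly from \eqref{sec3:27}--\eqref{sec3:28} and observe that it is already a strictly positive quadratic form (using $P\geq cI_n>0$ and $\int_0^1\kappa>0$ when $x_-\neq x_+$), so every crossing of the \emph{original} path is regular with positive signature and no perturbation is needed. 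Your route is more explicit and exploits the concrete structure of the regularization term; the paper's route is agnostic to that structure and would work for any path joining $I(0)$ to a positive-definite endpoint. Both arguments are standard, and your concern about isolating crossings is resolved automatically by the positivity of $\tfrac{d}{dr}I(r)$: each eigenvalue is strictly increasing in $r$, hence crosses zero at most once.
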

    
\begin{proof} 
We consider a small $C^1$-perturbation of the path $\{I(r),r\in[0,r_0]\}$, denoted by $\{\tilde{I}(r),r\in[0,r_0]\}$, such that $I(i)=\tilde{I}(i),i=0,r_0$, and there are only regular crossings for $\tilde{I}(r)$ on $[0,r_0]$. Then
\begin{align*}
	sf(I(r),r\in[0,r_0])=sf(\tilde{I}(r),r\in[0,r_0]).
\end{align*}

 Since $\tilde{I}(r_0)$ is positive, we assume the perturbation is increasing at $r=0$. Hence, if $r=0$ is a crossing instant, we have $m^-(P_0\dot{\tilde{I}}(0)P_0)=0$, where $P_{0}$ denotes the orthogonal projection onto $\ker \tilde{I}(0)$. The spectral flow $sf(\{\tilde{I}(r),r\in[0,r_0]\})$, counting the net number of negative eigenvalues of $\tilde{I}(0)=I(0)$ becoming positive, is exactly the number $m^-(x,T)$. Similarly, we have the second identity $ sf(A(r),r\in[0,r_0])=m_T^-(x )$.
    See also the proofs in \cite[Proposition 3.5]{portaluri_linear_2021} and \cite[Proposition 3.5]{portaluri_linear_2022}, where periodic orbits of Lagrangian systems are studied.
\end{proof}

\subsection{\texorpdfstring{Relation between indices $m^-(x,T)$ and $m_T^-(x)$}{Relation between indices m-(x,T) and mT-(x)}  }

In this section, we treat the noise intensity $\sigma$ as a parameter in the Lagrangian function.
Such a function $L$ in \eqref{sec1:1} takes the form
\begin{align*}
	L(\sigma,x,v)=L_1(x,v)+\sigma L_2(x,v),\quad \sigma\in\mathbb{R}^+.
\end{align*}
We denote the associated Lagrangian functional by
\begin{align*}
	\mathcal{L}(\sigma,x,T)&:=T\int_{0}^{1}L(\sigma,x,\dot{x}/T)dt\\
	&=T\int_{0}^{1}L_1(x,\dot{x}/T)dt+\sigma T\int_{0}^{1}L_2(x,\dot{x}/T)dt\\
	&=\mathcal{L}_1(x,T)+\sigma\mathcal{L}_2(x,T).
\end{align*}
Suppose $(x_\sigma,T_\sigma)$ is a critical point of $\mathcal{L}({\sigma},\cdot)$ in $W_{x_\pm}^{1,2}(\mathbb{I},\mathbb{R}^n)\times\mathbb{R}^+$ and is a $C^1$-continuous path with respect to $\sigma\in[\sigma_0,\sigma_1]$ for some $0\leq \sigma_0<\sigma_1$. We denote by
\begin{align*}
	\tfrac{d}{d\sigma}(x_\sigma,T_\sigma)=(\xi_\sigma,b_\sigma),\quad d^j\mathcal{L}(\sigma,x,T):=d^j_{[x,T]}\mathcal{L}(\sigma,x,T),\ j=1,2.
\end{align*} 
 Since for each $\sigma$,
\begin{align*}
    d\mathcal{L}(\sigma,x_\sigma,T_\sigma)[(\eta,d)]=0,\quad \forall (\eta,d)\in W_0^{1,2}(\mathbb{I},\mathbb{R}^n)\times \mathbb{R},
\end{align*}
we have, under the derivation with respect to $\sigma$,
\begin{align}\label{sec3:14}
d^2\mathcal{L}(\sigma,x_\sigma,T_\sigma)[(\xi_\sigma,b_\sigma),(\eta,d)]+ d\mathcal{L}_2(x_\sigma,T_\sigma)[(\eta,d)]=0,\ \forall (\eta,d)\in W_0^{1,2}(\mathbb{I},\mathbb{R}^n)\times \mathbb{R}.	
\end{align} 
We denote by $\mathcal{M}(\sigma)$ the set consisting of $(\xi,b)\in W_0^{1,2}(\mathbb{I},\mathbb{R}^n)\times \mathbb{R}$ such that the above equation \eqref{sec3:14} holds for each $(\eta,d)$.
Then $\mathcal{M}(\sigma)$ is an affine space in the sense 
\begin{align*}
 \mathcal{M}(\sigma)=  (\xi_\sigma,b_\sigma)+\ker d^2\mathcal{L}(\sigma,x_\sigma,T_\sigma).
\end{align*} 


Now we denote by $$\mathcal{H}(\sigma)=\ker d\mathcal{L}_2(x_\sigma,T_\sigma)$$ and 
\begin{align*}
	\mathcal{H}^\bot(\sigma)=\{(\xi,b)\in W_0^{1,2}(\mathbb{I},\mathbb{R}^n)\times\mathbb{R}\ |\ d^2\mathcal{L}(\sigma,x_\sigma,T_\sigma)[(\xi,b),(\eta,d)]=0,\forall (\eta,d)\in\mathcal{H}(\sigma) \}.
\end{align*}
From equation \eqref{sec3:14} we know that $\mathcal{H}^\bot(\sigma)\subseteq\mathcal{H}(\sigma)$.
Moreover, we have the following result.
 \begin{lem}[{cf. \cite[Lemma 3.13]{portaluri_linear_2022}}]\label{sec2:8}
   \begin{align}
        \quad \mathcal{H}^\bot(\sigma)=\ker d^2\mathcal{L}(\sigma,x_\sigma,T_\sigma)+ \mathbb{R}(\xi_\sigma,b_\sigma),\quad  \mathcal{H}(\sigma) \supseteq\ker d^2\mathcal{L}(\sigma,x_\sigma,T_\sigma) 
     \end{align}
 \end{lem}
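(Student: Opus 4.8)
The plan is to prove the two assertions in Lemma~\ref{sec2:8} by a direct argument based on equation~\eqref{sec3:14}, exploiting the fact that $d^2\mathcal{L}(\sigma,x_\sigma,T_\sigma)$ is (the bilinear form associated with) a self-adjoint Fredholm operator, so that its kernel is finite-dimensional and one has an orthogonal decomposition of the ambient space into the kernel and the range of the operator. Throughout, write $H=W_0^{1,2}(\mathbb{I},\mathbb{R}^n)\times\mathbb{R}$, abbreviate the Hessian as $\mathcal{Q}:=d^2\mathcal{L}(\sigma,x_\sigma,T_\sigma)$ and the linear functional as $\ell:=d\mathcal{L}_2(x_\sigma,T_\sigma)$, and recall from~\eqref{sec3:14} that $\mathcal{Q}[(\xi_\sigma,b_\sigma),\cdot\,]=-\ell(\cdot)$ on all of $H$, i.e. $(\xi_\sigma,b_\sigma)\in\mathcal{M}(\sigma)$.

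First I would establish the easy inclusion $\mathcal{H}(\sigma)\supseteq\ker\mathcal{Q}$: if $(\xi,b)\in\ker\mathcal{Q}$, then $\mathcal{Q}[(\xi,b),\cdot\,]=0$ on $H$, and pairing against $(\xi_\sigma,b_\sigma)$ and using symmetry of $\mathcal{Q}$ together with~\eqref{sec3:14} gives $0=\mathcal{Q}[(\xi,b),(\xi_\sigma,b_\sigma)]=-\ell(\xi,b)$, so $(\xi,b)\in\ker\ell=\mathcal{H}(\sigma)$. Next, for the description of $\mathcal{H}^\bot(\sigma)$: the inclusion $\supseteq$ is immediate, since both $\ker\mathcal{Q}$ and $\mathbb{R}(\xi_\sigma,b_\sigma)$ lie in $\mathcal{H}^\bot(\sigma)$ — the former because it is $\mathcal{Q}$-orthogonal to everything, the latter because $\mathcal{Q}[(\xi_\sigma,b_\sigma),(\eta,d)]=-\ell(\eta,d)=0$ for $(\eta,d)\in\mathcal{H}(\sigma)=\ker\ell$ — and $\mathcal{H}^\bot(\sigma)$ is a linear subspace. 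For the reverse inclusion $\subseteq$, take $(\xi,b)\in\mathcal{H}^\bot(\sigma)$. Two cases arise. If $\ell$ vanishes identically then $\mathcal{H}(\sigma)=H$, so $(\xi,b)\in\ker\mathcal{Q}$ and we are done (and also $(\xi_\sigma,b_\sigma)\in\ker\mathcal{Q}$ by~\eqref{sec3:14}); if $\ell\not\equiv0$, pick any $(\eta_0,d_0)$ with $\ell(\eta_0,d_0)=1$, and observe that by~\eqref{sec3:14} $\mathcal{Q}[(\xi_\sigma,b_\sigma),(\eta_0,d_0)]=-1$, so $(\xi_\sigma,b_\sigma)\notin\mathcal{H}^\bot(\sigma)$, hence any $(\eta,d)\in H$ decomposes as $(\eta,d)=\ell(\eta,d)\,(\eta_0,d_0)+\big[(\eta,d)-\ell(\eta,d)(\eta_0,d_0)\big]$ with the bracket in $\mathcal{H}(\sigma)$. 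Given $(\xi,b)\in\mathcal{H}^\bot(\sigma)$, set $\mu:=-\mathcal{Q}[(\xi,b),(\eta_0,d_0)]$ and consider $(\zeta,e):=(\xi,b)-\mu(\xi_\sigma,b_\sigma)$; I claim $(\zeta,e)\in\ker\mathcal{Q}$. Indeed, testing $\mathcal{Q}[(\zeta,e),\cdot\,]$ against the two pieces of the decomposition above: against the $\mathcal{H}(\sigma)$-piece it vanishes since $(\xi,b)\in\mathcal{H}^\bot(\sigma)$ and $\mathcal{Q}[(\xi_\sigma,b_\sigma),\cdot\,]=-\ell$ annihilates $\mathcal{H}(\sigma)=\ker\ell$; against $(\eta_0,d_0)$ it equals $\mathcal{Q}[(\xi,b),(\eta_0,d_0)]-\mu\,\mathcal{Q}[(\xi_\sigma,b_\sigma),(\eta_0,d_0)]=-\mu-\mu(-1)=0$. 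Since $\mathcal{Q}$ represents a bounded self-adjoint operator and these two families of test vectors span $H$, the form $\mathcal{Q}[(\zeta,e),\cdot\,]$ vanishes on $H$, i.e. $(\zeta,e)\in\ker\mathcal{Q}$, whence $(\xi,b)=(\zeta,e)+\mu(\xi_\sigma,b_\sigma)\in\ker\mathcal{Q}+\mathbb{R}(\xi_\sigma,b_\sigma)$, as required.

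The main obstacle I anticipate is bookkeeping rather than conceptual: one must handle the degenerate case $\ell\equiv0$ separately (there $\mathcal{H}(\sigma)=H=\mathcal{H}^\bot(\sigma)$ and one needs~\eqref{sec3:14} to see this forces $(\xi_\sigma,b_\sigma)\in\ker\mathcal{Q}$, so the claimed formula still reads correctly), and one must be careful that ``$\mathcal{Q}[(\zeta,e),\cdot\,]$ vanishes on a spanning set $\Rightarrow$ $(\zeta,e)\in\ker\mathcal{Q}$'' uses that $\mathcal{Q}$ is the bilinear form of a \emph{bounded} self-adjoint operator on $H$ — this is exactly the Fredholm structure of $\mathcal{I}_r$ (equivalently of $d^2\mathcal{L}$ plus a compact correction) recorded before Proposition~\ref{sec3:30}, so no new analytic input is needed. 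The argument is otherwise purely linear-algebraic and is a transcription, to the present free-time setting on $W_0^{1,2}(\mathbb{I},\mathbb{R}^n)\times\mathbb{R}$, of the proof of \cite[Lemma~3.13]{portaluri_linear_2022}; I would close by noting that the same computation shows $\mathcal{H}^\bot(\sigma)=\ker\mathcal{Q}$ precisely when $(\xi_\sigma,b_\sigma)\in\ker\mathcal{Q}$, which is the degenerate situation to be excluded in the bifurcation analysis of Section~\ref{sec4:47}.
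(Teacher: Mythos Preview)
Your argument is correct and reaches the same conclusion, but the route differs from the paper's in two places. For $\ker d^2\mathcal{L}\subseteq\mathcal{H}(\sigma)$ you give a one-line direct proof (pair a kernel element against $(\xi_\sigma,b_\sigma)$ and invoke~\eqref{sec3:14}), whereas the paper argues by contradiction, using the first assertion to show that a hypothetical kernel element outside $\mathcal{H}(\sigma)$ would force $d\mathcal{L}_2\equiv0$. For the inclusion $\mathcal{H}^\bot(\sigma)\subseteq\ker d^2\mathcal{L}+\mathbb{R}(\xi_\sigma,b_\sigma)$ you construct the scalar $\mu$ explicitly and verify $(\xi,b)-\mu(\xi_\sigma,b_\sigma)\in\ker\mathcal{Q}$ by testing against a spanning set; the paper instead uses the Fredholm structure to bound $\dim\mathcal{H}^\bot(\sigma)\le\dim\ker d^2\mathcal{L}+1$ and then matches dimensions. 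Your constructive approach is a bit more elementary (no dimension counting needed); the paper's is shorter once the Fredholm bound is granted. Two harmless slips in your writeup: the claim ``$(\xi_\sigma,b_\sigma)\notin\mathcal{H}^\bot(\sigma)$'' is false---you had just shown the opposite---and should read ``$(\xi_\sigma,b_\sigma)\notin\ker\mathcal{Q}$''; and in your closing paragraph ``$\mathcal{H}(\sigma)=H=\mathcal{H}^\bot(\sigma)$'' is incorrect (when $\ell\equiv0$ one has $\mathcal{H}^\bot(\sigma)=\ker\mathcal{Q}$, exactly as your main body correctly says). Neither slip affects the logic.
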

 \begin{proof}
 (1)    By the definition of $\mathcal{H}^\bot(\sigma)$ we have
     \begin{align*}
\ker d^2\mathcal{L}(\sigma,x_\sigma,T_\sigma)\subseteq\mathcal{H}^\bot(\sigma).
     \end{align*}
 Recall the identity \eqref{sec3:14} we get $ \mathbb{R}(\xi_\sigma,b_\sigma) \subseteq \mathcal{H}^\bot(\sigma)$.
 
   On the other hand, $\mathcal{H}^\bot(\sigma)\subseteq\ker d^2\mathcal{L}(\sigma,x_\sigma,T_\sigma)+\mathbb{R}(\xi_\sigma,b_\sigma)$. In fact, the codimension of $\mathcal{H}(\sigma)$ is at most 1. Then from the definition of  $\mathcal{H}^\bot(\sigma)$, we have 
     \begin{align}
         \dim \mathcal{H}^\bot(\sigma)\leq \dim\ker  d^2\mathcal{L}(\sigma,x_\sigma,T_\sigma)+1.
     \end{align}
If it holds that $ \mathbb{R}(\xi_\sigma,b_\sigma)\notin \ker  d^2\mathcal{L}(\sigma,x_\sigma,T_\sigma)$, we obtain the result. Otherwise, we have $\mathcal{H}(\sigma)=0$ and hence $\mathcal{H}^\bot(\sigma)=\ker  d^2\mathcal{L}(\sigma,x_\sigma,T_\sigma)$. This completes the proof of the first argument.

  (2)   Now we prove $\ker d^2\mathcal{L}(\sigma,x_\sigma,T_\sigma)\subseteq \mathcal{H}(\sigma)$.
If there is
\begin{align*}
	(\xi,b)\in\ker d^2\mathcal{L}(\sigma,x_\sigma,T_\sigma) \text{ and }d\mathcal{L}_2(x_\sigma,T_\sigma)[(\xi,b)]\neq 0,
\end{align*}
 we have $(\xi,b)\notin \mathcal{H}(\sigma)$.
     Then \begin{align}
         \mathbb{R}[(\xi,b)]+\mathcal{H}(\sigma)= W_0^{1,2}(\mathbb{I},\mathbb{R}^n)\times\mathbb{R}
     \end{align}
    and $\ker d^2\mathcal{L}(\sigma,x_\sigma,T_\sigma)+\mathcal{H}(\sigma)= \mathbb{R}[(\xi,b)]+\mathcal{H}(\sigma)$. We have
     \begin{align}
         \mathcal{H}^\bot(\sigma)=(\mathcal{H}(\sigma)+\ker d^2\mathcal{L}(\sigma,x_\sigma,T_\sigma))^\bot=(W_0^{1,2}(\mathbb{I},\mathbb{R}^n)\times\mathbb{R})^\bot=\ker d^2\mathcal{L}(\sigma,x_\sigma,T_\sigma).
     \end{align}
     Therefore, $\mathcal{M}(\sigma)\subseteq\ker d^2\mathcal{L}(\sigma,x_\sigma,T_\sigma)$
and hence $d\mathcal{L}_2(x_\sigma,T_\sigma)=0$, which contradicts the assumption that  $d\mathcal{L}_2(x_\sigma,T_\sigma)[(\xi,b)]\neq 0$. Then we conclude that $\ker d^2\mathcal{L}(\sigma,x_\sigma,T_\sigma)\subseteq \mathcal{H}(\sigma)$.
\end{proof}
\bigskip

We denote by 
	\begin{align}\label{sec3:41}
		\mathfrak{n}(\sigma)=m^-(d^2\mathcal{L}(\sigma,x_\sigma,T_\sigma)|_{\mathcal{H}^\bot(\sigma)})+
		\dim(\mathcal{H}(\sigma)\cap \mathcal{H}^\bot(\sigma))-\dim(\mathcal{H}(\sigma)\cap
		\ker d^2\mathcal{L}(\sigma,x_\sigma,T_\sigma)),
	\end{align}
    and
$\mathfrak{a}(\sigma)=\tfrac{d}{d\sigma}\mathcal{L}_2(x_\sigma,T_\sigma):[\sigma_0,\sigma_1]\rightarrow\mathbb{R}$.

\begin{lem}\label{sec3:46}
Under the above notations, we have $\mathfrak{n}(\sigma)\in \{0,1\}$. Moreover, there hold
\begin{itemize}
	\item[(1)] $\mathfrak{n}(\sigma)=m^+(\mathfrak{a}(\sigma))$
	if $\mathfrak{a}(\sigma)\neq 0$;
	\item[(2)] $\mathfrak{n}(\sigma)=1$ if $d\mathcal{L}_2(x_\sigma,T_\sigma)\neq 0$ and $\mathfrak{a}(\sigma)=0$;
	\item[(3)] $\mathfrak{n}(\sigma)-m^+(\mathfrak{a}(\sigma))\geq 0$, with equality if $\mathfrak{n}(\sigma)= 0$.
\end{itemize}
\end{lem}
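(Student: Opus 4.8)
The plan is to collapse the whole computation of $\mathfrak n(\sigma)$ onto the one–dimensional subspace spanned by the branch velocity $(\xi_\sigma,b_\sigma)=\tfrac{d}{d\sigma}(x_\sigma,T_\sigma)$, and to read everything off the scalar $\mathfrak a(\sigma)$. Write $H_\sigma:=d^2\mathcal L(\sigma,x_\sigma,T_\sigma)$; convexity of $L$ makes $H_\sigma$ a symmetric Fredholm form with finite Morse index and finite–dimensional radical $\ker H_\sigma$. First I would rewrite \eqref{sec3:14} as the identity of linear functionals $d\mathcal L_2(x_\sigma,T_\sigma)[\,\cdot\,]=-H_\sigma[(\xi_\sigma,b_\sigma),\,\cdot\,]$ on $W_0^{1,2}(\mathbb I,\mathbb R^n)\times\mathbb R$. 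Three consequences are immediate: $(\xi_\sigma,b_\sigma)\in\ker H_\sigma$ iff $d\mathcal L_2(x_\sigma,T_\sigma)=0$; by the chain rule (there is no explicit $\sigma$ in $\mathcal L_2$) and \eqref{sec3:14} taken at $(\eta,d)=(\xi_\sigma,b_\sigma)$, $\mathfrak a(\sigma)=d\mathcal L_2(x_\sigma,T_\sigma)[(\xi_\sigma,b_\sigma)]=-H_\sigma[(\xi_\sigma,b_\sigma),(\xi_\sigma,b_\sigma)]$; and, via Lemma \ref{sec2:8}, $\mathcal H^\bot(\sigma)=\ker H_\sigma+\mathbb R(\xi_\sigma,b_\sigma)$ together with $\ker H_\sigma\subseteq\mathcal H(\sigma)$ and $\ker H_\sigma\subseteq\mathcal H^\bot(\sigma)$ (the last because $\ker H_\sigma$ is the radical of $H_\sigma$).

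Next I would evaluate the three terms of \eqref{sec3:41}. Since $\ker H_\sigma$ is the radical, $H_\sigma$ restricted to $\mathcal H^\bot(\sigma)=\ker H_\sigma+\mathbb R(\xi_\sigma,b_\sigma)$ is block diagonal: it vanishes on $\ker H_\sigma$ and reduces on the line $\mathbb R(\xi_\sigma,b_\sigma)$ to the $1\times1$ form $t\mapsto-\mathfrak a(\sigma)\,t^2$ (which is identically zero precisely in the case $(\xi_\sigma,b_\sigma)\in\ker H_\sigma$, and there $\mathfrak a(\sigma)=0$ as well). Hence $m^-(d^2\mathcal L(\sigma,x_\sigma,T_\sigma)|_{\mathcal H^\bot(\sigma)})=m^+(\mathfrak a(\sigma))$ in every case. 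The term $\dim(\mathcal H(\sigma)\cap\ker H_\sigma)=\dim\ker H_\sigma$ by Lemma \ref{sec2:8}. The delicate term is $\dim(\mathcal H(\sigma)\cap\mathcal H^\bot(\sigma))$: writing a generic element of $\mathcal H^\bot(\sigma)$ as $v+t(\xi_\sigma,b_\sigma)$ with $v\in\ker H_\sigma\subseteq\mathcal H(\sigma)$, membership in $\mathcal H(\sigma)=\ker d\mathcal L_2(x_\sigma,T_\sigma)$ is equivalent to $t\,\mathfrak a(\sigma)=0$; therefore $\mathcal H(\sigma)\cap\mathcal H^\bot(\sigma)$ equals all of $\mathcal H^\bot(\sigma)$ when $\mathfrak a(\sigma)=0$ and equals $\ker H_\sigma$ when $\mathfrak a(\sigma)\neq0$. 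Setting $\delta(\sigma):=\dim(\mathcal H(\sigma)\cap\mathcal H^\bot(\sigma))-\dim(\mathcal H(\sigma)\cap\ker H_\sigma)$, this says $\delta(\sigma)=1$ exactly when $\mathfrak a(\sigma)=0$ and $d\mathcal L_2(x_\sigma,T_\sigma)\neq0$, and $\delta(\sigma)=0$ otherwise.

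Assembling, \eqref{sec3:41} becomes $\mathfrak n(\sigma)=m^+(\mathfrak a(\sigma))+\delta(\sigma)$, where $\delta(\sigma)\in\{0,1\}$ and $\delta(\sigma)=1$ only if $\mathfrak a(\sigma)=0$ (so $m^+(\mathfrak a(\sigma))=0$ there). Thus the two summands are never simultaneously $1$ and $\mathfrak n(\sigma)\in\{0,1\}$. If $\mathfrak a(\sigma)\neq0$ then $\delta(\sigma)=0$ and $\mathfrak n(\sigma)=m^+(\mathfrak a(\sigma))$, giving (1). If $\mathfrak a(\sigma)=0$ and $d\mathcal L_2(x_\sigma,T_\sigma)\neq0$, then $\delta(\sigma)=1$ and $m^+(\mathfrak a(\sigma))=0$, so $\mathfrak n(\sigma)=1$, giving (2). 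Finally $\mathfrak n(\sigma)-m^+(\mathfrak a(\sigma))=\delta(\sigma)\ge0$, and $\mathfrak n(\sigma)=0$ forces $\delta(\sigma)=0$ (since $\delta(\sigma)\le\mathfrak n(\sigma)$), hence equality, giving (3).

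The main obstacle is the bookkeeping of the intersection $\mathcal H(\sigma)\cap\mathcal H^\bot(\sigma)$: one should not identify it with $\mathcal H^\bot(\sigma)$ in general — that identification is valid only when $\mathfrak a(\sigma)=0$ — because the extra direction $(\xi_\sigma,b_\sigma)$, while it always lies in $\mathcal H^\bot(\sigma)$, leaves $\mathcal H(\sigma)$ the moment $\mathfrak a(\sigma)\neq0$, and it is precisely this drop in dimension that compensates the negative direction of $H_\sigma|_{\mathcal H^\bot(\sigma)}$ and keeps $\mathfrak n(\sigma)$ inside $\{0,1\}$. The only other technical point is making the $1\times1$ reduction of $H_\sigma|_{\mathcal H^\bot(\sigma)}$ rigorous in the Hilbert setting, which rests solely on $\ker H_\sigma$ being the radical of the Fredholm form $H_\sigma$, so that no cross terms arise between $\ker H_\sigma$ and $\mathbb R(\xi_\sigma,b_\sigma)$.
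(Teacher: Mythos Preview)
Your proof is correct and follows essentially the same approach as the paper: both rely on Lemma~\ref{sec2:8} to collapse $\mathcal H^\bot(\sigma)$ to $\ker H_\sigma+\mathbb R(\xi_\sigma,b_\sigma)$, use the identity $\mathfrak a(\sigma)=-H_\sigma[(\xi_\sigma,b_\sigma),(\xi_\sigma,b_\sigma)]$ from \eqref{sec3:14}, and then split on whether $\mathfrak a(\sigma)$ vanishes and whether $(\xi_\sigma,b_\sigma)\in\ker H_\sigma$. Your packaging via the decomposition $\mathfrak n(\sigma)=m^+(\mathfrak a(\sigma))+\delta(\sigma)$ is a bit more streamlined than the paper's four-case enumeration, but the content is the same.
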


\begin{proof}
The first result of Lemma \ref{sec2:8} shows that
\begin{align}
m^-(d^2\mathcal{L}(\sigma,x_\sigma,T_\sigma)|_{\mathcal{H}^\bot(\sigma)})&=m^-(d^2\mathcal{L}(\sigma,x_\sigma,T_\sigma)|_{\mathbb{R}(\xi_\sigma,b_\sigma)}).
\end{align}
Then the identity \eqref{sec3:14} implies that
\begin{equation}\label{sec3:53}
	\begin{aligned}
		m^-(d^2\mathcal{L}(\sigma,x_\sigma,T_\sigma)|_{\mathcal{H}^\bot(\sigma)})=&m^+\left(d\mathcal{L}_2(x_\sigma,T_\sigma)[(\xi_\sigma,b_\sigma)]\right)\\
		=&m^+(\mathfrak{a}(\sigma)).
	\end{aligned}
\end{equation}
Using the first result of Lemma \ref{sec2:8} again we get
\begin{align}\label{sec3:44}
    \dim(\mathcal{H}(\sigma)\cap \mathcal{H}^\bot(\sigma))-\dim(\mathcal{H}(\sigma)\cap
		\ker d^2\mathcal{L}(\sigma,x_\sigma,T_\sigma))\geq 0.
\end{align}

Let $(\xi,b)\in\mathcal{M}(\sigma)$ be an element such that $(\xi,b)= (\xi_\sigma,b_\sigma)+(\eta,d)$ for some $(\eta,d)\in \ker d^2\mathcal{L}(\sigma,x_\sigma,T_\sigma)$.

 Case 1: $(\xi,b)\in \mathcal{H}(\sigma)$. Then by \eqref{sec3:14} we have $ \mathfrak{a}(\sigma)=0$. Therefore, 
\begin{align*}
	\mathfrak{n}(\sigma)=
\dim(\mathcal{H}(\sigma)\cap \mathcal{H}^\bot(\sigma))-\dim(\mathcal{H}(\sigma)\cap
\ker d^2\mathcal{L}(\sigma,x_\sigma,T_\sigma)).
\end{align*}

(i) If $(\xi,b)\in \ker d^2\mathcal{L}(\sigma,x_\sigma,T_\sigma)$, then by Lemma \ref{sec2:8} we have
\begin{align*}
    \mathcal{H}(\sigma)\cap \mathcal{H}^\bot(\sigma)=\ker d^2\mathcal{L}(\sigma,x_\sigma,T_\sigma),
\end{align*}
which yields \eqref{sec3:44} is zero and $\mathfrak{n}(\sigma)=0$.

(ii) If $(\xi,b)\notin \ker d^2\mathcal{L}(\sigma,x_\sigma,T_\sigma)$, we have
\begin{align*}
    \dim(\mathcal{H}(\sigma)\cap \mathcal{H}^\bot(\sigma))-\dim (\mathcal{H}(\sigma)\cap \ker d^2\mathcal{L}(\sigma,x_\sigma,T_\sigma))=1.
\end{align*}
It follows that $\mathfrak{n}(\sigma)=1$. We notice that in this case,
\begin{align}
d^2\mathcal{L}(\sigma,x_\sigma,T_\sigma)[(\xi,b),(\xi,b)]=- d\mathcal{L}_2(x_\sigma,T_\sigma)[(\xi,b)]=0.  
\end{align}

Case 2: $(\xi,b)\notin\mathcal{H}(\sigma) $. Then $ \mathfrak{a}(\sigma)\neq 0$.

In this case, Lemma \ref{sec2:8} shows that $(\xi,b)\notin \ker d^2\mathcal{L}(\sigma,x_\sigma,T_\sigma)$ and 
\begin{align*}
    \dim(\mathcal{H}(\sigma)\cap \mathcal{H}^\bot(\sigma))-\dim (\mathcal{H}(\sigma)\cap \ker d^2\mathcal{L}(\sigma,x_\sigma,T_\sigma))=0.
\end{align*}
Then by \eqref{sec3:53} we have $\mathfrak{n}(\sigma)=m^+(\mathfrak{a}(\sigma))$.  Moreover, 
\begin{align*}
    d^2\mathcal{L}(\sigma,x_\sigma,T_\sigma)[(\xi,b),(\xi,b)]
    =-d\mathcal{L}_2(x_\sigma,T_\sigma)[(\xi,b)]=-\mathfrak{a}(\sigma)\neq 0.
\end{align*}

(iii) If $d^2\mathcal{L}(\sigma,x_\sigma,T_\sigma)[(\xi,b),(\xi,b)]<0$, then $\mathfrak{a}(\sigma)>0$, and hence $\mathfrak{n}(\sigma)=1$

 (iv) If $d^2\mathcal{L}(\sigma,x_\sigma,T_\sigma)[(\xi,b),(\xi,b)]>0$, then $\mathfrak{a}(\sigma)<0 $ and $\mathfrak{n}(\sigma)=0$. 

This completes the proof that $\mathfrak{n}(\sigma)\in \{0,1\}$. 
The result (1) follows directly from (iii)(iv). 
If $d\mathcal{L}_2(x_\sigma,T_\sigma)\neq 0$, then $(\xi,b)\notin \ker d^2\mathcal{L}(\sigma,x_\sigma,T_\sigma)$. It follows by (ii) that $\mathfrak{n}(\sigma)=1$ if $\mathfrak{a}(\sigma)=0$.
The result (3) is also an immediate consequence of the above analysis.
\end{proof}

\bigskip

Let $(x_\sigma,T_\sigma)$ be a critical point of $\mathcal{L}(\sigma)$.
Denote by
	\begin{align}\label{sec3:42}
		I_\sigma(r,\epsilon)=\left[\begin{matrix}
			A_\sigma(r)& (1-\epsilon)B_\sigma\\
			(1-\epsilon)B_\sigma^*& (1-\epsilon)C_\sigma(r)
		\end{matrix}\right].
        \end{align}
    Then $I_\sigma(r,0)=I_\sigma(r)$ as defined in \eqref{sec3:29}. Moreover, we have
\begin{lem}\label{sec3:31}
  Under the above notation, we have
  $sf(I_\sigma(0,\epsilon),\epsilon\in[0,1])	=\mathfrak{n}(\sigma)$.
\end{lem}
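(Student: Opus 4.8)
\emph{Plan.} I would compute $sf(I_\sigma(0,\epsilon),\epsilon\in[0,1])$ directly from the crossing form formula \eqref{sec3:40}, and then identify the outcome with $\mathfrak n(\sigma)$ through the three cases of Lemma \ref{sec3:46}. The path $\{I_\sigma(0,\epsilon)\}_{\epsilon\in[0,1]}$ is a $C^1$ curve in $\mathcal{CF}^{sa}(W_0^{1,2}(\mathbb I,\mathbb R^n)\times\mathbb R)$: each $I_\sigma(0,\epsilon)$ is a finite-rank perturbation (carried by $(1-\epsilon)B_\sigma$ and $(1-\epsilon)C_\sigma(0)$) of the Fredholm operator $A_\sigma(0)\oplus 0$, and it runs from $I_\sigma(0,0)=I_\sigma(0)$, which by \eqref{sec3:29} represents $d^2\mathcal L(\sigma,x_\sigma,T_\sigma)$, to $I_\sigma(0,1)=\mathrm{diag}(A_\sigma(0),0)$, which represents $d^2\mathcal L^{T_\sigma}(x_\sigma)$ on $W_0^{1,2}(\mathbb I,\mathbb R^n)\times\{0\}$ and $0$ on the second factor $\mathbb R$. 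Since $\partial_{vv}L>0$, only finitely many eigenvalues are ever non-positive along the path, so the crossings are isolated.

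\emph{Step 1 (the spectral flow).} Because $\tfrac{d}{d\epsilon}I_\sigma(0,\epsilon)=\left[\begin{matrix}0&-B_\sigma\\-B_\sigma^*&-C_\sigma(0)\end{matrix}\right]$ is constant, \eqref{sec3:43} gives, at every crossing $\lambda_*$, the crossing form $\Gamma(I_\sigma(0,\cdot),\lambda_*)[(\xi,b)]=-2\langle B_\sigma b,\xi\rangle-C_\sigma(0)b^2$ on $\ker I_\sigma(0,\lambda_*)$. The subspace $K:=\{(\xi,0):A_\sigma(0)\xi=0,\ B_\sigma^*\xi=0\}$ lies in $\ker I_\sigma(0,\epsilon)$ for every $\epsilon$ and in the kernel of every crossing form, so it splits off with zero contribution and one works on $K^\perp$. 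On $K^\perp$ there are at most three crossings: $\epsilon=0$ (only if $I_\sigma(0)$ is degenerate), one possible interior instant $\epsilon_*$ coming from the single eigenvalue of the rank-one family $A_\sigma(0)-(1-\epsilon)C_\sigma(0)^{-1}B_\sigma B_\sigma^*$ that can vanish, and $\epsilon=1$ (always, with $\ker I_\sigma(0,1)=\ker A_\sigma(0)\oplus\mathbb R$). Since $C_\sigma(0)=T^{-3}\int_0^1\kappa>0$, the kernel relations force $\Gamma=C_\sigma(0)b^2>0$ on $K^\perp$ both at $\epsilon=0$ and at $\epsilon_*$, so $\epsilon=0$ contributes $-m^-=0$ and $\epsilon_*$ contributes $+1$ whenever that eigenvalue genuinely crosses, while at $\epsilon=1$ the form $-2b\,(B_\sigma^*\xi)-C_\sigma(0)b^2$ on $\ker A_\sigma(0)\oplus\mathbb R$ has positive index $1$ on $K^\perp$ iff $B_\sigma^*$ does not vanish identically on $\ker A_\sigma(0)$, and $0$ otherwise. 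The interior crossing occurs exactly when $B_\sigma^*$ \emph{does} vanish on $\ker A_\sigma(0)$ and $m^-(A_\sigma(0)-C_\sigma(0)^{-1}B_\sigma B_\sigma^*)>m^-(A_\sigma(0))$, so the two positive contributions exclude one another; feeding everything into \eqref{sec3:40} and using the monotonicity in $\epsilon$ of the Morse index of the rank-one family yields $sf(I_\sigma(0,\epsilon),\epsilon\in[0,1])=m^-(I_\sigma(0))-m^-(A_\sigma(0))=m^-(x_\sigma,T_\sigma)-m^-_{T_\sigma}(x_\sigma)$, which lies in $\{0,1\}$ by Cauchy interlacing for the codimension-one inclusion $W_0^{1,2}(\mathbb I,\mathbb R^n)\times\{0\}\subset W_0^{1,2}(\mathbb I,\mathbb R^n)\times\mathbb R$. (Alternatively, the same identity drops out of Theorem \ref{sec3:8} applied on $[0,r_0]\times[0,1]$ together with \eqref{sec3:51}, \eqref{sec3:52} and the positivity of $I_\sigma(r_0,\epsilon)$ for $\epsilon<1$ when $r_0$ is large.)

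\emph{Step 2 (matching $\mathfrak n(\sigma)$).} By \eqref{sec3:14}, $I_\sigma(0)(\xi_\sigma,b_\sigma)$ represents $-d\mathcal L_2(x_\sigma,T_\sigma)$, hence $(\xi_\sigma,b_\sigma)\in\ker d^2\mathcal L(\sigma,x_\sigma,T_\sigma)$ iff $d\mathcal L_2(x_\sigma,T_\sigma)=0$, and $d^2\mathcal L(\sigma,x_\sigma,T_\sigma)[(\xi_\sigma,b_\sigma),(\xi_\sigma,b_\sigma)]=-\mathfrak a(\sigma)$. By Lemma \ref{sec2:8} the expression \eqref{sec3:41} simplifies to $\mathfrak n(\sigma)=m^+(\mathfrak a(\sigma))+\delta$, where $\delta\in\{0,1\}$ equals $1$ precisely when $\mathfrak a(\sigma)=0$ and $(\xi_\sigma,b_\sigma)\notin\ker d^2\mathcal L(\sigma,x_\sigma,T_\sigma)$ (compare \eqref{sec3:53} and the case split in the proof of Lemma \ref{sec3:46}). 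In each of those cases the line $\mathbb R(\xi_\sigma,b_\sigma)$ supplies exactly one negative, or isotropic but non-kernel, direction of $d^2\mathcal L(\sigma,x_\sigma,T_\sigma)$ which is lost upon restriction to the fixed-time space, so that $m^-(x_\sigma,T_\sigma)-m^-_{T_\sigma}(x_\sigma)=1$ iff $\mathfrak n(\sigma)=1$ and $=0$ otherwise. Together with Step 1 this gives $sf(I_\sigma(0,\epsilon),\epsilon\in[0,1])=\mathfrak n(\sigma)$.

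\emph{Where the difficulty lies.} The crossing bookkeeping at $\epsilon=1$ is the crux: that crossing is genuinely degenerate, its kernel containing all of $\ker A_\sigma(0)$, so one must either decouple the constant kernel $K$ as above or perturb the path rel endpoints and verify that the net count is insensitive to the perturbation. The other delicate point is Step 2, i.e. checking case by case that the $\{0,1\}$-index built from $\mathcal H^\bot(\sigma)$, $\mathfrak a(\sigma)$ and $d\mathcal L_2$ coincides with the jump of the Morse index between the free-time and fixed-time problems; this is exactly where Lemma \ref{sec2:8} and relation \eqref{sec3:14} are doing the work.
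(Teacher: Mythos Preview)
The paper omits the proof entirely and refers to \cite[Lemma 3.10]{portaluri_linear_2022}, so there is no in-paper argument to compare against; the logical role of the lemma, however, is clear: it is used to \emph{derive} Proposition~\ref{sec3:54}, i.e.\ the identity $m^-(x_\sigma,T_\sigma)-m^-_{T_\sigma}(x_\sigma)=\mathfrak n(\sigma)$. Your proposal inverts that flow: you first argue that the spectral flow equals the Morse index difference (Step~1) and then that the Morse index difference equals $\mathfrak n(\sigma)$ (Step~2). That is a different route, and in principle it can be made to work, but as written there are two genuine problems.

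First, the ``alternative'' you offer in Step~1 is the homotopy on $[0,r_0]\times[0,1]$, which is precisely the proof of Proposition~\ref{sec3:54} in the paper---and that proof \emph{uses} Lemma~\ref{sec3:31}. So invoking it here is circular. Moreover, the claim that $I_\sigma(r_0,\epsilon)$ is non-degenerate for all $\epsilon$ is false at $\epsilon=1$, where $I_\sigma(r_0,1)=\mathrm{diag}(A_\sigma(r_0),0)$ always has the kernel $\{0\}\times\mathbb R$; the paper handles this edge in its own conventions, but you cannot simply cite positivity. Your main crossing-form computation in Step~1 is more promising but not yet rigorous: the endpoint $\epsilon=1$ is a degenerate crossing (its kernel contains all of $\ker A_\sigma(0)\oplus\mathbb R$, not just the ``constant'' piece $K$), and the assertion that there are ``at most three'' crossings on $K^\perp$ rests on a rank-one reduction you have not justified.

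Second, Step~2 is where the real gap lies. You assert that ``the line $\mathbb R(\xi_\sigma,b_\sigma)$ supplies exactly one negative, or isotropic but non-kernel, direction \dots\ which is lost upon restriction to the fixed-time space''. This is exactly Proposition~\ref{sec3:54}, and your argument does not establish it: nothing guarantees $b_\sigma\neq 0$ (if $T_\sigma$ is locally constant in $\sigma$ then $(\xi_\sigma,b_\sigma)$ already lies in the fixed-time subspace and is not ``lost''), and even when $b_\sigma\neq 0$ the codimension-one interlacing only gives $m^-(x_\sigma,T_\sigma)-m^-_{T_\sigma}(x_\sigma)\in\{0,1\}$, not which value occurs. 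The referenced proof avoids this by reading the crossing contributions of $\epsilon\mapsto I_\sigma(0,\epsilon)$ directly against the three terms in the definition \eqref{sec3:41} of $\mathfrak n(\sigma)$ (via $\mathcal H(\sigma)$, $\mathcal H^\perp(\sigma)$ and Lemma~\ref{sec2:8}), never passing through the full Morse index. If you want a self-contained argument here, that direct matching is what you should carry out.
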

  \begin{proof}
      The proof is similar to \cite[Lemma 3.10]{portaluri_linear_2022} and we omit it here.
  \end{proof}      
\begin{prop}\label{sec3:54}
	Let $(x_\sigma,T_\sigma)$ be a critical point of $\mathcal{L}(\sigma)$ and $\mathfrak{n}(\sigma)$ be the correction term defined by \eqref{sec3:41}. The following holds
	\begin{align}\label{sec3:50}
        m^-(x_\sigma,T_\sigma)=m^-_{T_\sigma}(x_\sigma)+\mathfrak{n}(\sigma).
	\end{align}
\end{prop}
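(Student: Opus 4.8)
The plan is to translate both Morse indices into spectral flows and then apply the homotopy invariance of the spectral flow (Theorem \ref{sec3:8}) to the two-parameter family $I_\sigma(r,\epsilon)$ of \eqref{sec3:42} on the rectangle $[0,r_0]\times[0,1]$. By Proposition \ref{sec3:26}, $m^-(x_\sigma,T_\sigma)=sf(I_\sigma(r,0),r\in[0,r_0])$ and $m^-_{T_\sigma}(x_\sigma)=sf(A_\sigma(r),r\in[0,r_0])$, and by Lemma \ref{sec3:31}, $\mathfrak{n}(\sigma)=sf(I_\sigma(0,\epsilon),\epsilon\in[0,1])$; here $r_0$ is fixed as in Proposition \ref{sec3:30}, so that $I_\sigma(r_0,0)=I_\sigma(r_0)$ and $A_\sigma(r_0)$ are positive definite. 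Applying Theorem \ref{sec3:8} with $[a,b]=[0,r_0]$, $[c,d]=[0,1]$, $w_{r,\epsilon}=I_\sigma(r,\epsilon)$ gives
\begin{align*}
sf(I_\sigma(r,1),r\in[0,r_0])=sf(I_\sigma(r_0,\epsilon),\epsilon\in[0,1])+sf(I_\sigma(r,0),r\in[0,r_0])-sf(I_\sigma(0,\epsilon),\epsilon\in[0,1]).
\end{align*}
Thus the bottom edge is $m^-(x_\sigma,T_\sigma)$ and the left edge is $\mathfrak{n}(\sigma)$; it remains to evaluate the top and right edges.

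On the top edge $\epsilon=1$, the block form \eqref{sec3:29} degenerates to $I_\sigma(r,1)=\mathrm{diag}(A_\sigma(r),0)$: the $\mathbb{R}$-factor decouples and carries the constant eigenvalue $0$, so it contributes no spectral flow and $sf(I_\sigma(r,1),r\in[0,r_0])=sf(A_\sigma(r),r\in[0,r_0])=m^-_{T_\sigma}(x_\sigma)$. For the right edge $r=r_0$, I would enlarge $r_0$ if necessary so that, besides the positivity of $A_\sigma(r_0)$, one also has $C_\sigma(r_0)=(1+r_0)T_\sigma^{-3}\int_0^1\kappa(t)\,dt>B_\sigma^{*}A_\sigma(r_0)^{-1}B_\sigma$. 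This is possible because convexity $P(t)=\partial_{vv}L\geq cI_n>0$ makes the regularization term $\tfrac{r_0}{T_\sigma}\int_0^1 P(t)\xi\cdot\xi\,dt$ in $\mathcal{I}^{T_\sigma}_{r_0}$ force $B_\sigma^{*}A_\sigma(r_0)^{-1}B_\sigma\to 0$ as $r_0\to\infty$, while $C_\sigma(r_0)\to\infty$ since $\int_0^1\kappa(t)\,dt=\int_0^1\langle P(t)\dot x_\sigma,\dot x_\sigma\rangle\,dt>0$ (the MPTP is non-constant, as $x_-\neq x_+$). A Schur-complement computation then shows $I_\sigma(r_0,\epsilon)$ is positive definite for $\epsilon\in[0,1)$ and positive semidefinite with one-dimensional kernel $\{0\}\times\mathbb{R}$ at $\epsilon=1$; this endpoint is the only crossing, with crossing form $(0,b)\mapsto\langle\partial_\epsilon I_\sigma(r_0,\epsilon)|_{\epsilon=1}(0,b),(0,b)\rangle=-C_\sigma(r_0)|b|^2<0$, so by \eqref{sec3:40} (whose endpoint term is $m^+$ of the crossing form) the right edge contributes $0$.

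Substituting the four values into the displayed identity gives $m^-_{T_\sigma}(x_\sigma)=0+m^-(x_\sigma,T_\sigma)-\mathfrak{n}(\sigma)$, i.e. \eqref{sec3:50}; this is consistent with $\mathfrak{n}(\sigma)\in\{0,1\}$ from Lemma \ref{sec3:46}. The steps I expect to be most delicate are the two edges on which the family degenerates: on the top edge one must justify that a path of operators with a fixed one-dimensional kernel has the spectral flow of its restriction to the complement of that kernel; on the right edge one must pin down the sign of the crossing form at $(r_0,1)$, which is exactly what forces the quantitative choice of $r_0$ large enough for the diagonal term $C_\sigma(r_0)$ to dominate the off-diagonal term $B_\sigma$. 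For the Onsager-Machlup Lagrangian \eqref{sec1:1} one has $P\equiv I_n$ and $B_\sigma\equiv 0$, so $I_\sigma(r,\epsilon)$ is block diagonal and both points become trivial; the rest is bookkeeping with Theorem \ref{sec3:8} and formula \eqref{sec3:40}.
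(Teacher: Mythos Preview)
Your approach is essentially the same as the paper's: both apply the homotopy invariance of the spectral flow (Theorem \ref{sec3:8}) to the two-parameter family $I_\sigma(r,\epsilon)$ on $[0,r_0]\times[0,1]$, invoke Proposition \ref{sec3:26} for the edges $\epsilon=0,1$ and Lemma \ref{sec3:31} for the edge $r=0$, and argue that the edge $r=r_0$ contributes nothing. Your treatment is in fact more careful than the paper's on the two points you flag: the paper simply asserts that $I_\sigma(r_0,1-\epsilon)$ is non-degenerate for all $\epsilon\in[0,1]$ (citing Proposition \ref{sec3:30}), whereas at $\epsilon'=1$ one has $I_\sigma(r_0,1)=\mathrm{diag}(A_\sigma(r_0),0)$ with a one-dimensional kernel, and it identifies $sf(I_\sigma(r,1),r\in[0,r_0])$ with $m^-_{T_\sigma}(x_\sigma)$ without comment on the fixed $\{0\}\times\mathbb{R}$ kernel. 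Your crossing-form computation at $(r_0,1)$ and your Schur-complement choice of $r_0$ repair these gaps; in the Onsager--Machlup case $B_\sigma=0$ the issue is, as you note, vacuous.
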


\begin{proof}
According to the spectral flow formula Theorem \ref{sec3:8}, we have
	\begin{align*}
		&sf(I_\sigma(r,0),r\in[0,r_0])-sf(I_\sigma(r,1),r\in[0,r_0])\\
		=&sf(I_\sigma(0,\epsilon),\epsilon\in[0,1])+sf(I_\sigma(r_0,1-\epsilon),\epsilon\in[0,1]),
	\end{align*}
	where $r_0$ is large enough such that $I_\sigma(r_0,1-\epsilon)$ is non-degenerate for each $\epsilon\in[0,1]$ (see Proposition \ref{sec3:30}).
We deduce that  $sf(I_\sigma(r_0,1-\epsilon),\epsilon\in[0,1])=0$. By Lemma \ref{sec3:31} and Theorem \ref{sec3:26}, we get 
    \begin{align*}
    m^-(x_\sigma,T_\sigma)-m^-_{T_\sigma}(x_\sigma)= & sf(I_\sigma(r,0),r\in[0,r_0])-sf(I(r,1),r\in[0,r_0])\\
     =&\mathfrak{n}(\sigma).
    \end{align*}
    This completes the proof.
\end{proof}

\begin{rmk}
 Since the Morse indices $m^-(x,T), m^-_{T}(x)$ are non-negative, both $m_T^-(x)$ and $\mathfrak{n}$ are zero if $(x,T)$ is a minimizer.
\end{rmk}

\begin{rmk}
	The result in Proposition \ref{sec3:54} is analogous to the formula (3.22) in \cite{portaluri_linear_2022} and Theorem 1.3 in \cite{merry_index_2011}. The differences are as follows: (1) the problem considered here is a Dirichlet boundary value problem, rather than a periodic one; (2) the coefficient functional of $\sigma$ is
	\begin{align*}
		\mathcal{L}_2(x,T)=T\int_{0}^{1}L_2(x,\dot{x}/T)dt
	\end{align*}
	rather than a particular choice $\mathcal{L}_2(x,T)=T$. This is motivated by the fact that the term $\mathcal{L}_2(x,T)=-T\int_{0}^{1}\Delta V(x(t))dt$ plays a central role in OM theory, which is the primary focus of this paper.
\end{rmk}

\section{Bifurcation phenomenon of noise intensity}\label{sec4:47}
In this section we detect the bifurcation points $\sigma\in[\sigma_0,\sigma_1]$ by spectral flow. 
\begin{defi}
    Let $X$ and $Y$ be real Banach spaces, $I\subset\mathbb{R}$ be an 
 interval. Let $\mathcal{P}:I\times X\rightarrow Y$ be a 
 	function of class $C^l,l\in \{2,\dots,+\infty\}$, satisfying:
 	\begin{align*}
 		\mathcal{P}(\mu,x_0)=0,\quad \forall \mu\in I,
 	\end{align*}
 	for some fixed $x_0\in X$. The set $C:=\{(\mu,x_0),\mu\in I\}$ is called zeros (trivial branch) of the function $\mathcal{P}$. If there is a number $\mu_0\in I$ such that every neighborhood of $(\mu_0,x_0)\in I\times X$ contains zeros of $\mathcal{P}$ not lying on $C$, then $\mu_0$ is called the bifurcation point for the equation $\mathcal{P}(\cdot,\cdot)=0$.
\end{defi}

There is a classical result in bifurcation theory involving spectral flow.

\begin{thm}[{\cite[Theorem 1]{fitzpatrick_spectral_1999}}]\label{sec4:42}
    Let $\mathcal{U}$ be a neighborhood of $I\times \{0\}$ in $\mathbb{R}\times H$ and $\psi:\mathcal{U}\rightarrow\mathbb{R}$ be a $C^2$ function such that for each $\lambda$ in $I$, 0 is a critical point of the functional $\psi_\lambda$. Assume that the Hessian $L_\lambda$ of $\psi_\lambda$ at $0$ is Fredholm and that $L_a$ and $L_b$ are non-singular. If the spectral flow of $\{L_\lambda\}_{\lambda\in I}$ on the interval $I$ is nonzero, then every neighborhood of $I\times\{0\}$ contains points of the form $(\lambda,x)$, where $x\neq 0$ is a critical point of $\psi_\lambda$.
\end{thm}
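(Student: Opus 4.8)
The statement is the classical Fitzpatrick--Pejsachowicz--Recht criterion, so the plan is to recover its proof by coupling a parameter--dependent Lyapunov--Schmidt reduction with a Morse--theoretic (critical--group) continuation argument. I argue by contradiction: assume that no $\lambda\in I$ is a bifurcation point for $\nabla\psi_\lambda$. Since $I=[a,b]$ is compact, each non--bifurcation point $\lambda$ comes with a product neighbourhood $U_\lambda\times B_{\varepsilon_\lambda}(0)$ containing no critical points of $\psi_\mu$ other than $0$; covering $I$ by finitely many $U_\lambda$ and taking $\varepsilon=\min\varepsilon_\lambda$ makes the absence of nontrivial critical points uniform, i.e.\ for every $\lambda\in I$ the point $0$ is the only critical point of $\psi_\lambda$ in $B_\varepsilon(0)\subset H$. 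The goal is then to deduce $\mathrm{sf}(L_\lambda,I)=0$, contradicting the hypothesis.

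Next I would reduce to a finite--dimensional problem over $I$. For each $\lambda_0\in I$ pick $\delta_{\lambda_0}>0$ with $\pm\delta_{\lambda_0}\notin\Sigma(L_{\lambda_0})$; on a neighbourhood $J_{\lambda_0}$ of $\lambda_0$ the Riesz spectral projection $P_\lambda$ of $L_\lambda$ onto the spectral subspace for $(-\delta_{\lambda_0},\delta_{\lambda_0})$ is continuous and of constant finite rank, because $L_\lambda$ is self--adjoint and Fredholm. On $\ker P_\lambda$ the operator $L_\lambda$ is invertible, so the implicit function theorem solves the infinite block of $\nabla\psi_\lambda=0$ and produces a $C^2$ reduced functional $\phi_\lambda$ on an open subset of the finite--dimensional space $\mathrm{ran}\,P_\lambda$, whose critical points near $0$ correspond to those of $\psi_\lambda$ near $0$ and whose Hessian at $0$ is conjugate to $L_\lambda|_{\mathrm{ran}\,P_\lambda}$. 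Covering $I$ by finitely many such $J_{\lambda_0}$ and patching on overlaps, using additivity of the spectral flow under concatenation together with the fact that enlarging $\mathrm{ran}\,P_\lambda$ by subspaces on which $L_\lambda$ is definite of fixed signature changes the Morse indices only by a $\lambda$--independent constant and leaves the spectral flow unchanged, one obtains a single finite--dimensional family $\{\phi_\lambda\}_{\lambda\in I}$ on some $\mathbb{R}^N$ with $\mathrm{Hess}\,\phi_a(0)$, $\mathrm{Hess}\,\phi_b(0)$ invertible, with $0$ still a uniformly isolated critical point of each $\phi_\lambda$, and with $\mathrm{sf}(L_\lambda,I)=\mathrm{sf}(\mathrm{Hess}\,\phi_\lambda(0),I)$.

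Finally I would apply continuation invariance of critical groups. Since $0$ is an isolated critical point of $\phi_\lambda$ with a uniform isolating neighbourhood, the local critical groups $C_\ast(\phi_\lambda,0)$ are independent of $\lambda$, so $C_\ast(\phi_a,0)\cong C_\ast(\phi_b,0)$. At the endpoints $0$ is nondegenerate, so by the Morse lemma $C_q(\phi_a,0)$ is the coefficient group for $q=m^-(\mathrm{Hess}\,\phi_a(0))$ and vanishes otherwise, and likewise at $b$. Hence $m^-(\mathrm{Hess}\,\phi_a(0))=m^-(\mathrm{Hess}\,\phi_b(0))$, and since for a path of symmetric matrices with invertible endpoints the spectral flow equals the difference of the Morse indices of the endpoints (up to the fixed sign convention in \eqref{sec3:40}), we get $\mathrm{sf}(L_\lambda,I)=\mathrm{sf}(\mathrm{Hess}\,\phi_\lambda(0),I)=0$, a contradiction. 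Therefore some $\lambda_0\in I$ is a bifurcation point.

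The step I expect to be the main obstacle is the global bookkeeping in the Lyapunov--Schmidt reduction: the projections $P_\lambda$ are only locally continuous because eigenvalues of $L_\lambda$ cross $0$ along the path, so the local reductions must be glued on overlaps, and one must verify carefully that neither the spectral flow nor the relative Morse index is corrupted by the gluing or by the choice of cutoff levels $\delta_{\lambda_0}$. A secondary technical point is checking that the reduced family $\{\phi_\lambda\}$ has enough regularity and that the isolating neighbourhood can be chosen uniformly so that the critical--group continuation theorem applies; in finite dimensions and on bounded sets this is automatic. The endpoint computation via the Morse lemma and the identification of the spectral flow with the difference of the endpoint Morse indices are then routine.
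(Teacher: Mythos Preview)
The paper does not prove Theorem~\ref{sec4:42}; it is quoted verbatim from \cite{fitzpatrick_spectral_1999} as a black-box result and used as input for the bifurcation analysis. There is therefore no ``paper's own proof'' to compare against.

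That said, your outline is essentially the original Fitzpatrick--Pejsachowicz--Recht argument: contradiction plus compactness to get a uniform isolating neighbourhood, a parameter-dependent Lyapunov--Schmidt reduction to a finite-dimensional family, and homotopy invariance of critical groups to force equality of the endpoint Morse indices. You have correctly identified the genuine technical crux, namely that the spectral projections $P_\lambda$ cannot be chosen globally continuous when eigenvalues cross zero, so one must cover $I$ by finitely many subintervals with their own reductions and verify that both the spectral flow and the critical-group computation are compatible with the gluing. In the original reference this is handled via the ``cogredient parametrix'' construction, which trivialises the family of Hessians up to a compact perturbation and makes the reduction global in one stroke; your patching-on-overlaps description is equivalent but requires more bookkeeping. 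One small point to tighten: the step ``enlarging $\mathrm{ran}\,P_\lambda$ by subspaces on which $L_\lambda$ is definite of fixed signature changes the Morse indices only by a $\lambda$-independent constant'' needs the observation that the added block is not merely definite at each $\lambda$ but uniformly bounded away from zero on the overlap, so that no spurious crossings are introduced. With that caveat, the plan is sound.
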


Suppose $\{(x_\sigma,T_\sigma),\sigma\in[\sigma_0,\sigma_1]\}$ is a $C^1$-continuous path of extremals of $\mathcal{L}(\sigma)$. Then $k_0(L(\sigma);x_\pm)\leq0$ for each $\sigma\in[\sigma_0,\sigma_1]$; see Theorem \ref{sec2:23}. Now we consider the bifurcation of extremals with respect to $\sigma$. We simply write $\nabla \mathcal{L}(\sigma,x_\sigma,T_\sigma)$ as the representation of $d\mathcal{L}(\sigma,x_\sigma,T_\sigma)$ under the Riemannian inner product. A parameter $\sigma_*\in[\sigma_0,\sigma_1]$ is called a bifurcation point if it is a bifurcation point for the equation
\begin{align}
    \mathcal{P}(\sigma,x_\sigma,T_\sigma):=\nabla \mathcal{L}(\sigma,x_\sigma,T_\sigma)=0.
\end{align}
 We simply write $I_\sigma(r)=I_\sigma(r,0)$ for $I_\sigma$ introduced in \eqref{sec3:42}.
By Theorem \ref{sec4:42}, if the spectral flow of the Hessian $I_\sigma(0)$ along $\sigma\in[\sigma_0,\sigma_1]$ is nonzero, we conclude that there is a (variational) local bifurcation at the point $\sigma_*$.
 
By the definition of spectral flow in \eqref{sec3:40}, a bifurcation point $\sigma_*\in(\sigma_0,\sigma_1)$ occurs at the crossing instant such that the crossing form makes a nonzero contribution to the spectral flow 
\begin{align*}
    sf(I_\sigma(0),\sigma\in[\sigma_0,\sigma_1])\neq 0.
\end{align*} 
To compute the net change in the index, we employ a decomposition of the spectral flow; see Theorem \ref{sec4:46}.

\subsection{Bifurcation formula with respect to noise intensity}
For the critical points $x_\sigma,\sigma\in[\sigma_0,\sigma_1]$, we have the corresponding linear operators $P=P_\sigma,Q=Q_\sigma,R=R_\sigma$ defined by \eqref{sec3:47}, which are exactly dependent on $\sigma$.

Let $\xi\in W_0^{1,2}$ and 
\begin{align*}
	y(t)=(\tfrac{1}{sT}P(t)\dot{\xi}+Q(t)\xi),\quad u(t)=(y(t),\xi(t)).
\end{align*}
 Then by the condition $\xi(0)=0=\xi(1)$ we get $u(0)\in\mathbb{R}^n\times \{0\}$ and $ u(1)\in \mathbb{R}^n\times \{0\}$. 
Denote by
\begin{align*}
    \Lambda=\mathbb{R}^n\times \{0\}\oplus \mathbb{R}^n\times 
    \{0\}
\end{align*}
and $W_\Lambda^{1,2}=\{u\in W^{1,2}(\mathbb{I},\mathbb{R}^{2n})\ |\ (u(0),u(1))\in\Lambda\}$.
The linear Euler-Lagrange system \eqref{sec3:49} with Dirichlet boundary condition corresponds to the linear Hamiltonian system
    \begin{equation}\label{sec4:41}
        \begin{aligned}
	\begin{cases}
		\frac{d}{dt}u(t)=J {B}_{\sigma,s}(t)u(t),\\
		(u(0),u(1))\in\Lambda
	\end{cases}
\end{aligned}
    \end{equation}
with 
\begin{align}\label{sec3:25}
	{B}_{\sigma,s}(t)=sT_\sigma\left( \begin{matrix}       
		P^{-1}& -P^{-1}Q\\
		-Q^\top P^{-1}& Q^\top P^{-1}Q-R   	\end{matrix}\right)(st)=:sB_\sigma(st).
\end{align}

 We denote by
 \begin{align}\label{sec4:55}
 	\mathcal{A}(\sigma,s)=-J\frac{d}{dt}-B_{\sigma,s}(t):W_\Lambda^{1,2}\rightarrow L^2,
 \end{align} 
 which is continuous at $s=0$.
Notice that the path $\mathcal{A}(\sigma,0)=-J\frac{d}{dt}$ is constant along $\sigma\in[\sigma_0,\sigma_1]$, we have
     \begin{align}\label{sec5:1}
    sf(\mathcal{A}(\sigma,0),\sigma\in[\sigma_0,\sigma_1])=0.
\end{align}
The relation between Morse index $m^-_{T_\sigma}(x_\sigma)$ and spectral flow $sf(\mathcal{A}(\sigma,s),s\in[0,1])$ is
\begin{align}\label{sec4:45}
m_{T_\sigma}^-(x_\sigma)+n=	sf(\mathcal{A}(\sigma,s),s\in[0,1]).
\end{align}
See, e.g., \cite[Proposition 4.4]{hu_dihedral_2020}.

In the following, we show that the spectral flow of $\{I_\sigma(0),\sigma\in[\sigma_0,\sigma_1]\}$ is the sum of $sf(\mathcal{A}(\sigma,1),\sigma\in[\sigma_0,\sigma_1]) $ and  
$sf( \mathfrak{a}(\sigma),\sigma\in[\sigma_0,\sigma_1])$ under some conditions. To establish this decomposition, we need two lemmas.
 \begin{lem}\label{sec5:2}
 We have
     \begin{align}
sf(\mathcal{A}(\sigma,1),\sigma\in[\sigma_0,\sigma_1])
=m^-_{T_{\sigma_1}}(x_{\sigma_1})-m^-_{T_{\sigma_0}}(x_{\sigma_0}).
	\end{align}
 \end{lem}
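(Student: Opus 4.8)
The plan is to reduce the claim to the additivity of the spectral flow along one edge of a homotopy square, combined with the already-established identity \eqref{sec4:45}. First I would invoke the homotopy-invariance machinery of Theorem \ref{sec3:8} applied to the two-parameter family $\mathcal{A}(\sigma,s)$, $(\sigma,s)\in[\sigma_0,\sigma_1]\times[0,1]$, which is continuous by \eqref{sec4:55}. Concatenating the spectral flows along the four sides of the rectangle gives
\begin{align*}
sf(\mathcal{A}(\sigma,1),\sigma\in[\sigma_0,\sigma_1])
&= sf(\mathcal{A}(\sigma_1,s),s\in[0,1]) + sf(\mathcal{A}(\sigma,0),\sigma\in[\sigma_0,\sigma_1]) \\
&\quad - sf(\mathcal{A}(\sigma_0,s),s\in[0,1]).
\end{align*}

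Next I would plug in the two facts already available in the excerpt. By \eqref{sec5:1} the bottom edge $\{\mathcal{A}(\sigma,0)=-J\tfrac{d}{dt}\}$ is a constant path, so its spectral flow vanishes. By \eqref{sec4:45}, the vertical edges satisfy $sf(\mathcal{A}(\sigma_i,s),s\in[0,1])=m^-_{T_{\sigma_i}}(x_{\sigma_i})+n$ for $i=0,1$, where $n=\dim\mathbb{R}^n$ enters uniformly because the boundary condition $\Lambda$ is the same at both ends. Substituting, the two additive constants $n$ cancel, leaving exactly
\begin{align*}
sf(\mathcal{A}(\sigma,1),\sigma\in[\sigma_0,\sigma_1]) = m^-_{T_{\sigma_1}}(x_{\sigma_1}) - m^-_{T_{\sigma_0}}(x_{\sigma_0}),
\end{align*}
which is the assertion.

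The one point requiring care, and the main obstacle, is the legitimacy of applying Theorem \ref{sec3:8} at $s=0$: the operator $\mathcal{A}(\sigma,s)=-J\tfrac{d}{dt}-B_{\sigma,s}(t)$ involves $B_{\sigma,s}(t)=sB_\sigma(st)$, which degenerates as $s\to 0$, and one must confirm that the family stays inside $\mathcal{CF}^{sa}(W^{1,2}_\Lambda)$ (self-adjoint Fredholm) and depends continuously on $(\sigma,s)$ up to and including $s=0$ — this is precisely the continuity statement asserted after \eqref{sec4:55}, so I would cite it. A secondary check is that $n$ really is the same correction term at $\sigma_0$ and $\sigma_1$: this follows because formula \eqref{sec4:45} (via \cite[Proposition 4.4]{hu_dihedral_2020}) has the form $m^-_{T_\sigma}(x_\sigma)+n=sf(\mathcal{A}(\sigma,s),s\in[0,1])$ with $n$ depending only on the fixed Lagrangian subspace $\Lambda$ and the dimension, not on $\sigma$. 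With these two observations in place the computation is purely formal.
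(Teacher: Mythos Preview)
Your proposal is correct and follows essentially the same approach as the paper: apply the homotopy invariance of spectral flow (Theorem \ref{sec3:8}) to the rectangle $[\sigma_0,\sigma_1]\times[0,1]$, kill the bottom edge via \eqref{sec5:1}, and evaluate the two vertical edges via \eqref{sec4:45} so that the common $+n$ terms cancel. The paper's proof is identical in structure, only citing Proposition \ref{sec3:26} alongside \eqref{sec4:45} and without your explicit remarks on the continuity at $s=0$.
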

 \begin{proof}
 	The Homotopy invariance of the spectral flow of $\mathcal{A}(\sigma,s)$ implies that
 	\begin{align*}
 		&sf(\mathcal{A}(\sigma,1),\sigma\in[\sigma_0,\sigma_1])+sf(\mathcal{A}(\sigma_0,s),s\in [0,1])\\
 		=&sf(\mathcal{A}(\sigma_1,s),s\in 
 		[0,1])+sf(\mathcal{A}(\sigma,0),\sigma\in[\sigma_0,\sigma_1]).
 	\end{align*}	
 	By Proposition \ref{sec3:26} and equation \eqref{sec4:45}, the path $\{\mathcal{A}(\sigma,s),s\in[0,1]\}$ in above identity has the form 
\begin{align}\label{sec5:5}
    sf(\mathcal{A}(\sigma,s),s\in[0,1])=m_{T_\sigma}^-(x_\sigma)+n=sf(A_\sigma(r),r\in[0,r_0])+n.
\end{align}
Moreover, from \eqref{sec5:1} we know that $sf(\mathcal{A}(\sigma,0),\sigma\in[\sigma_0,\sigma_1])=0$. Substituting this into the previous identity yields
	\begin{align*}
sf(\mathcal{A}(\sigma,1),\sigma\in[\sigma_0,\sigma_1])
=m^-_{T_{\sigma_1}}(x_{\sigma_1})-m^-_{T_{\sigma_0}}(x_{\sigma_0}).
	\end{align*}
This completes the proof.
 \end{proof}

Now we consider the spectral flow of a path of continuous functions $\{\mathfrak{a}(\sigma),\sigma\in[\sigma_0,\sigma_1]\}$.

\begin{lem}\label{sec5:3}
 Let $\{(x_\sigma,T_\sigma),\sigma\in[\sigma_0,\sigma_1]\}$ be a path of critical points such that $m^+(\mathfrak{a}(\sigma_i))=\mathfrak{n}(\sigma_i),i=0,1$. We have 
     \begin{align*}
    sf(\mathfrak{a}(\sigma),\sigma\in[\sigma_0,\sigma_1])= \mathfrak{n}(\sigma_1)-\mathfrak{n}(\sigma_0).
    \end{align*}
\end{lem}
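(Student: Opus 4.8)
## Proof Proposal for Lemma \ref{sec5:3}

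Since $\mathfrak{a}$ takes values in $\mathbb{R}$, and a real number is precisely a self-adjoint Fredholm operator on the one-dimensional Hilbert space $\mathbb{R}$, the object under study is the spectral flow of a scalar path. The plan is to show that, for such a path, the spectral flow is a purely topological quantity that depends only on the sign of $\mathfrak{a}$ at the two endpoints, namely
\begin{align*}
	sf(\mathfrak{a}(\sigma),\sigma\in[\sigma_0,\sigma_1])=m^+(\mathfrak{a}(\sigma_1))-m^+(\mathfrak{a}(\sigma_0)),
\end{align*}
and then to invoke the standing hypothesis $m^+(\mathfrak{a}(\sigma_i))=\mathfrak{n}(\sigma_i)$ to conclude.

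To establish the displayed identity I would first pass, by a $C^1$-small perturbation of $\mathfrak{a}$ keeping the endpoint values $\mathfrak{a}(\sigma_0),\mathfrak{a}(\sigma_1)$ fixed, to a path whose zeros in $[\sigma_0,\sigma_1]$ are finitely many and simple; homotopy invariance of the spectral flow with fixed endpoints preserves $sf$. Each such zero $\sigma_*$ is a regular crossing with $\ker\mathfrak{a}(\sigma_*)=\mathbb{R}$ and crossing form $\Gamma(\mathfrak{a},\sigma_*)[u]=\mathfrak{a}'(\sigma_*)u^2$, so the crossing formula \eqref{sec3:40} applies. Between consecutive zeros $\mathfrak{a}$ has constant sign, and $\sign(\mathfrak{a}'(\sigma_*))$ is $+1$ precisely when $\mathfrak{a}$ changes from negative to positive at $\sigma_*$ and $-1$ in the opposite case, so the interior zeros alternate in type. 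Telescoping the alternating sum $\sum_{\sigma_0<\sigma_*<\sigma_1}\sign(\mathfrak{a}'(\sigma_*))$ and absorbing the boundary terms $-m^-(\Gamma(\mathfrak{a},\sigma_0))$ and $m^+(\Gamma(\mathfrak{a},\sigma_1))$ of \eqref{sec3:40} collapses the whole expression to $m^+(\mathfrak{a}(\sigma_1))-m^+(\mathfrak{a}(\sigma_0))$: when $\mathfrak{a}(\sigma_i)\neq0$ the boundary term is absent and $m^+(\mathfrak{a}(\sigma_i))$ is the usual positivity count, while when $\mathfrak{a}(\sigma_i)=0$ the value is read off the boundary crossing form. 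Applying $m^+(\mathfrak{a}(\sigma_i))=\mathfrak{n}(\sigma_i)$ then gives $sf(\mathfrak{a}(\sigma),\sigma\in[\sigma_0,\sigma_1])=\mathfrak{n}(\sigma_1)-\mathfrak{n}(\sigma_0)$.

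The one point requiring genuine care — and the main obstacle — is the bookkeeping at a degenerate endpoint, i.e. when $\mathfrak{a}(\sigma_i)=0$. Here the hypothesis $m^+(\mathfrak{a}(\sigma_i))=\mathfrak{n}(\sigma_i)$ forces, via Lemma \ref{sec3:46}(2), that $d\mathcal{L}_2(x_{\sigma_i},T_{\sigma_i})=0$ and $\mathfrak{n}(\sigma_i)=0$; moreover pairing \eqref{sec3:14} with $(\xi_\sigma,b_\sigma)$ yields the identity
\begin{align*}
	\mathfrak{a}(\sigma)=-\,d^2\mathcal{L}(\sigma,x_\sigma,T_\sigma)\big[(\xi_\sigma,b_\sigma),(\xi_\sigma,b_\sigma)\big],
\end{align*}
so the sign of $\mathfrak{a}$ near such a $\sigma_i$ is governed by the Hessian along the direction $(\xi_\sigma,b_\sigma)$. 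Combining this with $(\xi_{\sigma_i},b_{\sigma_i})\in\ker d^2\mathcal{L}(\sigma_i,x_{\sigma_i},T_{\sigma_i})$ (which follows from \eqref{sec3:14} once $d\mathcal{L}_2(x_{\sigma_i},T_{\sigma_i})=0$), one checks that the boundary crossing term in \eqref{sec3:40} contributes $0$, in agreement with $\mathfrak{n}(\sigma_i)=0$. The interior telescoping and the homotopy reduction are routine; it is precisely this reconciliation of the endpoint convention built into \eqref{sec3:40} with the normalization imposed in the hypothesis that is the substantive step. In the generic situation $\mathfrak{a}(\sigma_i)\neq 0$ for $i=0,1$ there is nothing to check: the endpoints are nondegenerate, $sf$ equals $m^+(\mathfrak{a}(\sigma_1))-m^+(\mathfrak{a}(\sigma_0))$ outright, and the hypothesis — equivalently Lemma \ref{sec3:46}(1) — finishes the argument.
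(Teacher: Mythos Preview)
Your proposal is correct and follows essentially the same approach as the paper: both reduce to the identity $sf(\mathfrak{a}(\sigma),\sigma\in[\sigma_0,\sigma_1])=m^+(\mathfrak{a}(\sigma_1))-m^+(\mathfrak{a}(\sigma_0))$ for a scalar path via a regular-crossing perturbation and homotopy invariance, then invoke the hypothesis. Your telescoping of the alternating interior contributions is a minor variant of the paper's reduction to monotone or definite paths, and your explicit treatment of the degenerate endpoint (forcing $\mathfrak{n}(\sigma_i)=0$ via Lemma~\ref{sec3:46}(2)) is in fact more careful than the paper's own argument, which tacitly assumes nondegenerate endpoints.
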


\begin{proof}
According to the assumption, we only have to show that  \begin{align*}
    sf(\mathfrak{a}(\sigma),\sigma\in[\sigma_0,\sigma_1])=  m^+(\mathfrak{a}(\sigma_1))  -m^+(\mathfrak{a}(\sigma_0)).
    \end{align*}

Without loss of generality, we always assume there are only regular crossings for the $C^1$-continuous path $\{\mathfrak{a}(\sigma),\sigma\in[\sigma_0,\sigma_1]\}$.
  Firstly, we assume that $\mathfrak{a}(\sigma_i),i=0,1 $ have the same signature. Then $m^+(\mathfrak{a}(\sigma_0))-m^+(\mathfrak{a}(\sigma_1))=0$. And also, by the homotopy invariance of spectral flow, we can choose $\tilde{\mathfrak{a}}(\sigma)$, positive or negative definite for $\sigma\in(\sigma_0,\sigma_1)$ such that $sf(\mathfrak{a}(\sigma),\sigma\in[\sigma_0,\sigma_1])=sf(\tilde{\mathfrak{a}}(\sigma),\sigma\in[\sigma_0,\sigma_1])$. Then by \eqref{sec3:40} we obtain 
 \begin{align*}
     m^+(\mathfrak{a}(\sigma_1))-   m^+(\mathfrak{a}(\sigma_0))=0=sf(\mathfrak{a}(\sigma),\sigma\in[\sigma_0,\sigma_1]).
    \end{align*}
 
 Now we suppose $\mathfrak{a}(\sigma_0)\neq \mathfrak{a}(\sigma_1)$. Using the homotopy invariance of spectral flow again, we can modify the path $\mathfrak{a}(\sigma)$ such that it is monotone with respect to $\sigma\in[\sigma_0,\sigma_1]$. If the path is increasing and $\sigma_0$ is the only regular crossing, it follows that $\mathfrak{a}(\sigma_0)-\mathfrak{a}(\sigma_1)=0-1=-1$. By the definition of spectral flow \eqref{sec3:40}, we have
 \begin{align*}
     sf(\mathfrak{a}(\sigma),\sigma\in[\sigma_0,\sigma_1])=1-0=1. 
 \end{align*}
  If the path $\mathfrak{a}(\sigma)$ is decreasing and $\sigma_0$ is the only regular crossing, we have the following result by the same reasoning
  \begin{align*}
m^+(\mathfrak{a}(\sigma_1))-  m^+(\mathfrak{a}(\sigma_0))=0= sf(\mathfrak{a}(\sigma),\sigma\in[\sigma_0,\sigma_1]). 
 \end{align*}
Then we complete the proof. 
\end{proof}
\bigskip

We now derive the main result of this section.
\begin{thm}[Bifurcation w.r.t $\sigma$] \label{sec4:46}
Under the assumption of Lemma \ref{sec5:3} that $m^+(\mathfrak{a}(\sigma_i))=\mathfrak{n}(\sigma_i),i=0,1$, we have
	\begin{align}\label{sec4:43}
	sf(I_\sigma(0),\sigma\in[\sigma_0,\sigma_1])
    =sf(\mathcal{A}(\sigma,1),\sigma\in[\sigma_0,\sigma_1])+sf(\mathfrak{a}(\sigma),\sigma\in[\sigma_0,\sigma_1]).
	\end{align}
\end{thm}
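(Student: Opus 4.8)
The plan is to combine the index relation of Proposition~\ref{sec3:54} with the two computational lemmas just proved, using homotopy invariance of the spectral flow as the glue. First I would write down the index formula at the two endpoints: by Proposition~\ref{sec3:54}, $m^-(x_{\sigma_i},T_{\sigma_i})=m^-_{T_{\sigma_i}}(x_{\sigma_i})+\mathfrak{n}(\sigma_i)$ for $i=0,1$. Next I would recall from Proposition~\ref{sec3:26} that $sf(I_\sigma(r),r\in[0,r_0])=m^-(x_\sigma,T_\sigma)$, so that the endpoint Morse indices are precisely the quantities entering the path spectral flow $sf(I_\sigma(0),\sigma\in[\sigma_0,\sigma_1])$. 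The strategy is then to show this latter path spectral flow depends only on the endpoints (so equals the difference $m^-(x_{\sigma_1},T_{\sigma_1})-m^-(x_{\sigma_0},T_{\sigma_0})$), and to split that difference using the two index relations.

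In more detail, the key steps in order are: (1) Apply Theorem~\ref{sec3:8} (or more directly homotopy invariance along a two-parameter family, e.g. the family $I_\sigma(r,0)$ for $(\sigma,r)\in[\sigma_0,\sigma_1]\times[0,r_0]$, together with the fact that $I_\sigma(r_0,0)$ is positive definite hence has no crossings in $\sigma$) to reduce $sf(I_\sigma(0),\sigma\in[\sigma_0,\sigma_1])$ to the difference of endpoint Morse indices $m^-(x_{\sigma_1},T_{\sigma_1})-m^-(x_{\sigma_0},T_{\sigma_0})$. (2) Substitute the endpoint identity from Proposition~\ref{sec3:54} to obtain
\begin{align*}
sf(I_\sigma(0),\sigma\in[\sigma_0,\sigma_1])=\bigl(m^-_{T_{\sigma_1}}(x_{\sigma_1})-m^-_{T_{\sigma_0}}(x_{\sigma_0})\bigr)+\bigl(\mathfrak{n}(\sigma_1)-\mathfrak{n}(\sigma_0)\bigr).
\end{align*}
(3) Invoke Lemma~\ref{sec5:2} to identify the first bracket with $sf(\mathcal{A}(\sigma,1),\sigma\in[\sigma_0,\sigma_1])$. (4) Invoke Lemma~\ref{sec5:3}, whose hypothesis $m^+(\mathfrak{a}(\sigma_i))=\mathfrak{n}(\sigma_i)$ is exactly the standing assumption of the theorem, to identify the second bracket with $sf(\mathfrak{a}(\sigma),\sigma\in[\sigma_0,\sigma_1])$. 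Adding these gives \eqref{sec4:43}.

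The main obstacle I expect is step (1): one must be careful that the spectral flow of $\{I_\sigma(0),\sigma\in[\sigma_0,\sigma_1]\}$ genuinely reduces to the difference of endpoint Morse indices. This requires (a) $I_{\sigma_0}(0)$ and $I_{\sigma_1}(0)$ themselves to be non-degenerate, or else a careful treatment of half-crossing contributions at the endpoints via the $m^\pm$ terms in \eqref{sec3:40}; and (b) a legitimate homotopy connecting the path $r\mapsto I_\sigma(r)$ through the parameter $r$ to a path with no $\sigma$-crossings, which is where Proposition~\ref{sec3:30} and the convexity remark (ensuring $I_\sigma(r_0)$ is positive definite uniformly in $\sigma$) are used. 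One subtlety is that the family $I_\sigma(r)$ must be \emph{jointly} continuous in $(\sigma,r)$ and the corner pieces along $\{r=0\}$ and $\{r=r_0\}$ handled as in Theorem~\ref{sec3:8}; once continuity and the endpoint non-degeneracy (after a small perturbation, as in the proof of Proposition~\ref{sec3:26}) are in place, the rest is bookkeeping. A secondary point to check is that the hypothesis $m^+(\mathfrak{a}(\sigma_i))=\mathfrak{n}(\sigma_i)$ is compatible with the generic situation where $\mathfrak{a}(\sigma_i)\neq0$ (Lemma~\ref{sec3:46}(1)), so the theorem is non-vacuous; this is really a remark rather than part of the proof.
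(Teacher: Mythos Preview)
Your proposal is correct and follows essentially the same route as the paper: homotopy invariance (Theorem~\ref{sec3:8}) on the rectangle $(\sigma,r)\in[\sigma_0,\sigma_1]\times[0,r_0]$ together with $sf(I_\sigma(r_0),\sigma)=0$ reduces $sf(I_\sigma(0),\sigma)$ to the endpoint difference $m^-(x_{\sigma_1},T_{\sigma_1})-m^-(x_{\sigma_0},T_{\sigma_0})$ via Proposition~\ref{sec3:26}, then Proposition~\ref{sec3:54} splits this into the two brackets, and Lemmas~\ref{sec5:2} and~\ref{sec5:3} identify them with the two spectral flows on the right of~\eqref{sec4:43}. Your discussion of the subtleties in step~(1) (joint continuity, uniform positivity of $I_\sigma(r_0)$, endpoint degeneracy) is a fair elaboration of points the paper leaves implicit.
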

\begin{proof}
 By the homotopy invariance of the spectral flow along $(\sigma,r)\in [\sigma_0,\sigma_1]\times [0,r_0]$ (see Theorem \ref{sec3:8}) and the fact that
$sf(I_\sigma(r_0),\sigma\in[\sigma_0,\sigma_1])=0$, we obtain
   \begin{align}\label{sec4:54}     
   	 sf(I_\sigma(0),\sigma\in[\sigma_0,\sigma_1])
   =&sf(I_{\sigma_1}(r),r\in[0,r_0])-sf(I_{\sigma_0}(r),r\in[0,r_0]).
   \end{align} 
 Combining the identities \eqref{sec3:51} and \eqref{sec3:50} with Lemmas \ref{sec5:2} and \ref{sec5:3}, it follows that 
   \begin{align*}
    sf(I_\sigma(0),\sigma\in[\sigma_0,\sigma_1])	=&m^-(x_{\sigma_1},T_{\sigma_1})-m^-(x_{\sigma_0},T_{\sigma_0})\\
   	=&m^-_{T_{\sigma_1}}(x_{\sigma_1})-m^-_{T_{\sigma_0}}(x_{\sigma_0})+\mathfrak{n}(\sigma_1)-\mathfrak{n}(\sigma_0)\\
   	=&sf(\mathcal{A}(\sigma,1),\sigma\in[\sigma_0,\sigma_1])+sf(\mathfrak{a}(\sigma),\sigma\in[\sigma_0,\sigma_1]).
   \end{align*}
This completes the proof.
\end{proof}

This decomposition of the spectral flow in Theorem \ref{sec4:46} is significant because it separates the sources of instability.
The first term, involving the operator family $\{\mathcal{A}(\sigma,1),\sigma\in[\sigma_0,\sigma_1]\}$, reflects the contribution of conjugate points along the interior of the path $x_\sigma$
The second term, involving the scalar function family $\{\mathfrak{a}(\sigma),\sigma\in[\sigma_0,\sigma_1]\}$, captures more subtle bifurcations that arise from a distinctive feature of the OM functional.
Together, these two components provide a precise method for counting bifurcations with respect to noise intensity.

\begin{rmk}\label{sec4:44}
	In the case $k_0(L(\sigma);x_\pm)>0$, the minimizer $(x_\sigma,T_\sigma)$ of $\mathcal{L}^{\leq \tau}(\sigma)$ occurs at $T_\sigma=\tau$. A parameter $\sigma_*$ is a bifurcation point if it is a bifurcation point for the equation
	\begin{align*}
		\mathcal{P}(\sigma,x_\sigma):=\nabla \mathcal{L}^\tau(\sigma,x_\sigma)=0.
	\end{align*}
	The bifurcation point $\sigma_*$ is detected by the nonzero spectral flow of the Hessian $A_\sigma$ of $\mathcal{L}^\tau(\sigma,x_\sigma)$. 
Moreover, analogous to formula \eqref{sec4:54}, we have	\begin{align*}
		sf(A_\sigma,\sigma\in[\sigma_0,\sigma_1])=m_\tau^-(x_{\sigma_1})-m_\tau^-(x_{\sigma_0}).
	\end{align*}
The identity \eqref{sec4:45} implies that
	\begin{align*}
		m_\tau^-(x_\sigma)+n=sf(\mathcal{A}(\sigma,s),s\in[0,1]).
	\end{align*}
By Lemma \ref{sec5:2} we conclude that $sf(A_\sigma,\sigma\in[\sigma_0,\sigma_1])=sf(\mathcal{A}(\sigma,1),\sigma\in[\sigma_0,\sigma_1])$.
\end{rmk}

\section{Linear stability}\label{sec5:24}	

The spectral flow provides an effective tool for detecting bifurcation points and offers a criterion about the stability of transition paths. Consider $\sigma\in[\sigma_0,\sigma_1]$ with $\sigma_1>\sigma_0$. Let  $(x,T):=(x_{\sigma_0},T_{\sigma_0})$ be a minimizer of $\mathcal{L}_0^{\leq \tau}(\sigma_0)$.  Without loss of generality, we simply write
\begin{align*}
	\mathcal{L}^\tau:=\mathcal{L}_0^\tau(\sigma_0),\quad  	\mathcal{L}^{\leq\tau}:=\mathcal{L}_0^{\leq\tau}(\sigma_0).
\end{align*}
If $(x,T)$ is non-degenerate as a critical point of $\mathcal{L}^{\leq \tau}$, the Morse index $m^-(x,T)$ is unchanged for small positive perturbation of $\sigma\in(\sigma_0,\sigma_0+\epsilon)$. If $(x,T)$ is degenerate, from \eqref{sec3:40} we know that the Morse index $m^-(x,T)$ changes provided the crossing form at $\sigma=\sigma_0$ has nonzero $m^-$ index.

\begin{defi}
  The most probable transition path is called stable with respect to $\sigma$ if it remains a minimizer under small perturbations of the parameter. It is positively stable if this property holds for small positive perturbations, and negatively stable if it holds for small negative perturbations.
\end{defi}
\subsection{\texorpdfstring{Case of positive energy level: $k_0(L;x_\pm)>0$}{Case of positive energy level: k0(L;x-+)>0}}\label{sec4:56}
In this section, we assume
\begin{align}\label{sec4:53}
	k_0(L(\sigma);x_\pm)>0,\ \forall \sigma\in[\sigma_0,\sigma_1].
\end{align}
 Then the minimizer $(x,T)$ of the Lagrangian functional $\mathcal{L}^{\leq \tau}$ exists only at $ T=\tau$ (see statement below Theorem \ref{sec2:23}). The resulting path is a critical point of $\mathcal{L}^\tau$ with positive energy level, and has a Morse index of $m^-_\tau(x)=0$. Moreover, condition \eqref{sec4:53} ensures that a small perturbation of $\sigma$ preserves the property that $T_\sigma=\tau$. The instability with respect to $\sigma$ is characterized by the variation of the Morse index $m^-\tau(x_\sigma)$ as $\sigma$ changes, or equivalently, by the nonzero spectral flow $sf(A_\sigma, \sigma \in [\sigma_0, \sigma_1])$. By Remark \ref{sec4:44} we only have to study the spectral flow of the Hamiltonian operators $\mathcal{A}(\sigma,1)$ defined in \eqref{sec4:55}.

In general, we consider the operator $\mathcal{A}(\sigma,s)$ with $ s\in[0,1]$. A regular crossing instant along the path of critical points $(x_\sigma(s\cdot),\tau)$ is a pair $(\sigma_*,s_*)$ such that 
\begin{align*}
    \sign\Gamma(\mathcal{A}(\sigma_*,s_*),\sigma_*) \neq 0.
\end{align*}
We simply write
\begin{align*}
	{\partial_{\sigma_*}}(B_{\sigma,s})=\tfrac{\partial}{\partial 
		\sigma}|_{\sigma=\sigma_*}B_{\sigma,s},\quad
       	{\partial_{s_*}}(B_{\sigma,s})=\tfrac{\partial}{\partial 
		 s}|_{s=s_*}B_{\sigma,s}.
\end{align*} 
Suppose $(\sigma,s)$ is a crossing instant. Then the kernel of $\mathcal{A}$ at $(\sigma,s)$ is nonzero, i.e., $K(\sigma,s):=\ker \mathcal{A}(\sigma,s)\neq 0$. In directions of $\sigma$ and $s$ respectively, we have
    \begin{align}
        \Gamma(\mathcal{A}(\sigma,s),\sigma)=&	\angles{\partial_{\sigma}(\mathcal{A}{(\sigma,s)})\cdot}{\cdot}|_{K(\sigma,s)}=
		\angles{\partial_{\sigma}(B_{\sigma,s})\cdot}{\cdot}|_{K(\sigma,s)},\label{sec4:49}\\
        \Gamma(\mathcal{A}(\sigma,s),s)=&\angles{\partial_{s}(\mathcal{A}{(\sigma,s)})\cdot}{\cdot}|_{K(\sigma,s)}=
		\angles{\partial_{s}(B_{\sigma,s})\cdot}{\cdot}|_{K(\sigma,s)}\label{sec4:50}
    \end{align}
 (see \eqref{sec3:43}). As a consequence, the formula for spectral flow \eqref{sec3:40} implies that
\begin{equation}\label{sec5:20}
    \begin{aligned}
       sf(\mathcal{A}(\sigma,1),\sigma\in[\sigma_0,\sigma_1])=&-m^-(\angles{\partial_{\sigma_0}(B_{\sigma,1})\cdot}{\cdot}|_{K(\sigma_0,1)})+\sum_{\sigma_*\in(\sigma_0,\sigma_1)} \sign \angles{\partial_{\sigma_*}(B_{\sigma,1})\cdot}{\cdot}|_{K(\sigma_*,1)}\\
    &+m^+(\angles{\partial_{\sigma_1}(B_{\sigma,1})\cdot}{\cdot}|_{K(\sigma_1,1)}).
    \end{aligned}
\end{equation}

Let $\phi_{\sigma,s}(\cdot):[0,1]\rightarrow \Sp(2n)$ be the fundamental solution of the equation $\mathcal{A}(\sigma,s)u=-J\tfrac{du}{dt}-sB_\sigma(st)u=0$ (see \eqref{sec4:41}). Each element $u$ in $K(\sigma,s)$, as a solution of $\mathcal{A}(\sigma,s)u=0$, has the form $u(t)=\phi_{\sigma,s}(t)u(0)$ with $(u(0),u(1))\in\Lambda$. In other words,
\begin{align*}
	(u(0),\phi_{\sigma,s}(1)u(0))\in\Lambda\text{ if and only if } u\in K(\sigma,s).
\end{align*}
We denote by $\Lambda(\sigma,s)=Gr(\phi_{\sigma,s}(1))\cap\Lambda$ the set of $(u_0,\phi_{\sigma,s}(1)u_0)$ for $u_0\in\mathbb{R}^n\times\{0\}$. Then $\Lambda(\sigma,s)\cap\Lambda\subset \mathbb{R}^n\times\{0\}\oplus \mathbb{R}^n\times\{0\}$ consists of $u_0$ such that $u(t)=\phi_{\sigma,s}(t)u_0\in K(\sigma,s)$. It follows that 
\begin{align*}
	\dim \Lambda(\sigma,s)\cap\Lambda=\dim K(\sigma,s).
\end{align*} 

In the following two lemmas, we investigate properties of the crossing instant, which will be used in the proof of Proposition \ref{sec5:22}.

\begin{lem}[Property of crossing instant] \label{sec3:4}
Let $\mathcal{A}(\sigma,s)$ denote the operator defined above. Then the crossing instant $(\sigma,s)$ is always regular in the $s$-direction. 
Moreover, \begin{align*}
\sign\Gamma(\mathcal{A}(\sigma,s),s)=\dim 
	K(\sigma,s)>0.
\end{align*}
\end{lem}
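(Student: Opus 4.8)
The plan is to compute the crossing form $\Gamma(\mathcal{A}(\sigma,s),s)$ explicitly on the kernel $K(\sigma,s)$ and show that it is positive definite there, which immediately gives both regularity in the $s$-direction and the signature formula $\sign\Gamma(\mathcal{A}(\sigma,s),s)=\dim K(\sigma,s)$. By \eqref{sec4:50} the crossing form is the quadratic form $u\mapsto\angles{\partial_s(B_{\sigma,s})u}{u}_{L^2}$ restricted to $K(\sigma,s)$. Recall from \eqref{sec3:25} that $B_{\sigma,s}(t)=sB_\sigma(st)$, so $\partial_s B_{\sigma,s}(t)=B_\sigma(st)+st\,B_\sigma'(st)$; a priori this mixes a "scaling" term and a "derivative" term, but the second term should be handled by an integration by parts against the Hamiltonian equation satisfied by $u\in K(\sigma,s)$.

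First I would write $u\in K(\sigma,s)$ as $u(t)=\phi_{\sigma,s}(t)u(0)$, the fundamental solution, and use the defining equation $-J\dot u(t)=sB_\sigma(st)u(t)$, i.e. $\dot u = J B_{\sigma,s}u$. The key identity I want is that $\langle B_{\sigma,s}(t)u(t),u(t)\rangle$ is, up to the right constant and sign, the derivative in $t$ of the "Hamiltonian" of the path, or more directly that it relates to $\langle -J\dot u, u\rangle$. Plugging the equation into the crossing form, $\angles{\partial_s B_{\sigma,s}\,u}{u}=\int_0^1\langle(B_\sigma(st)+stB_\sigma'(st))u(t),u(t)\rangle\,dt$; I then recognize $\langle B_\sigma(st)u,u\rangle=\tfrac1s\langle B_{\sigma,s}u,u\rangle=\tfrac1s\langle -J\dot u,u\rangle$ and, since $B_\sigma$ is symmetric, $\tfrac{d}{dt}\langle B_\sigma(st)u,u\rangle = s\langle B_\sigma'(st)u,u\rangle+2\langle B_\sigma(st)u,\dot u\rangle$. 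Combining these, the $stB_\sigma'$ term can be turned into a total derivative plus boundary data; because $u(0),u(1)\in\mathbb{R}^n\times\{0\}$ are Lagrangian boundary conditions and $B_\sigma$ is built from $P,Q,R$ as in \eqref{sec3:25}, the boundary contributions either vanish or combine with the remaining term to leave exactly the positive quantity coming from convexity of $L$ (i.e. $P=\partial_{vv}L>0$). Concretely I expect the crossing form to reduce to something like $\tfrac{1}{s}\int_0^1 \langle P^{-1}(st)\,y(t)\rangle\cdots$ — a manifestly positive integral — perhaps after noting that on $K(\sigma,s)$ the second component of $u$ vanishes at the endpoints so the "momentum" part $y$ controls everything. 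This is essentially the classical fact that for a Tonelli Lagrangian the Maslov/conjugate-point crossings in the time (rescaling) direction are all of the same sign; I would cite the analogous computation in the references already invoked, e.g. \cite{musso_morse_2005-1} or \cite{hu_dihedral_2020}, for the structural identity and then adapt the boundary terms to the $\Lambda=\mathbb{R}^n\times\{0\}\oplus\mathbb{R}^n\times\{0\}$ Dirichlet case.

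Once positivity is established, regularity in the $s$-direction is automatic: a positive definite quadratic form is non-degenerate, so every crossing instant $(\sigma,s)$ is a regular crossing of the path $s\mapsto\mathcal{A}(\sigma,s)$. Moreover $\sign\Gamma(\mathcal{A}(\sigma,s),s)=m^+-m^-=\dim K(\sigma,s)-0=\dim K(\sigma,s)$, and since $(\sigma,s)$ is assumed to be a crossing, $\dim K(\sigma,s)>0$. I would also record, for use in Proposition \ref{sec5:22}, the geometric reformulation via $\dim \Lambda(\sigma,s)\cap\Lambda=\dim K(\sigma,s)$ already set up in the excerpt.

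The main obstacle is the bookkeeping of the boundary terms in the integration by parts: one must verify that, under the Dirichlet-type Lagrangian condition $(u(0),u(1))\in\Lambda$, the boundary contributions from $\tfrac{d}{dt}\langle B_\sigma(st)u,u\rangle$ genuinely cancel (they should, since the $\mathbb{R}^n\times\{0\}$ component forces the relevant quadratic expression to vanish at $t=0,1$), and that the leftover integrand is exactly the positive definite expression dictated by $\partial_{vv}L>0$ rather than merely non-negative. A secondary subtlety is the factor $s$ in the denominators: one should either treat $s\in(0,1]$ and handle $s=0$ separately (where $K(\sigma,0)=0$ since $\mathcal{A}(\sigma,0)=-J\tfrac{d}{dt}$ on $W^{1,2}_\Lambda$ has trivial kernel), or rescale time at the outset so the computation is uniform in $s$.
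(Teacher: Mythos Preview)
Your overall strategy is right: show the crossing form in the $s$-direction is positive definite on $K(\sigma,s)$, using the convexity $P=\partial_{vv}L>0$. But your expected output is wrong, and you have not identified the mechanism that makes the computation collapse. You write that after integration by parts you expect ``something like $\tfrac{1}{s}\int_0^1\langle P^{-1}(st)y(t),\ldots\rangle\,dt$'' --- a bulk integral. In fact the crossing form reduces to a \emph{single boundary evaluation}. If you pursue your route, the point you are missing is that for $u\in K(\sigma,s)$ one has $\dot u=sJB_\sigma(st)u$, and hence
\[
\langle B_\sigma(st)u,\dot u\rangle=s\langle B_\sigma(st)u,\,JB_\sigma(st)u\rangle=0
\]
by skew-symmetry of $J$. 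With this, $\frac{d}{dt}\bigl[t\langle B_\sigma(st)u,u\rangle\bigr]=\langle\partial_sB_{\sigma,s}(t)u,u\rangle$ exactly, so the crossing form is just $\langle B_\sigma(s)u(1),u(1)\rangle$. Since $u(1)\in\mathbb{R}^n\times\{0\}$ and $u(1)\neq0$ (else $u\equiv0$ by the ODE), this equals $T_\sigma\langle P^{-1}(s)y_1,y_1\rangle>0$. Your boundary-term bookkeeping worry dissolves once you see this cancellation.

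The paper reaches the same endpoint by a cleaner route that avoids integration by parts altogether. It observes the reparametrization identity $\phi_{\sigma,s}(t)=\phi_\sigma(st)$, which gives $\partial_s\phi_{\sigma,s}(1)=JB_\sigma(s)\phi_{\sigma,s}(1)$ in one line; combined with the general formula \eqref{sec4:30} expressing the crossing form as $\langle -J\phi_{\sigma,s}^{-1}(1)\partial_s\phi_{\sigma,s}(1)u(0),u(0)\rangle$, the symplectic relation $-J\phi^{-1}=\phi^\top J$ immediately yields the same boundary expression $\langle B_\sigma(s)u(1),u(1)\rangle$ restricted to $\Lambda(\sigma,s)\cap\Lambda$. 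So the two arguments are equivalent in content; the paper's version trades your ad hoc integration by parts for a reusable lemma (Theorem~\ref{sec4:16}) plus a one-line derivative of the fundamental solution.
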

\begin{proof}
In the case $s\in (0,1]$, we prove that the crossing form 
$\Gamma(\mathcal{A}(\sigma,s),s)$ is positive. 
By the formula \ref{sec4:30} we have
\begin{align*}
\Gamma(\mathcal{A}(\sigma,s),s)(u,v)=\angles{-J\phi_{\sigma,s}^{-1}(1)
\partial_s\phi_{\sigma,s}(1)u(0)}{v(0)},\quad \forall u,v\in K(\sigma,s).
\end{align*}
Let $\phi_\sigma(t)$ be the fundamental solution of the linear system $-J\tfrac{du}{dt}-B_\sigma(t)u=0$. Since $s^{-1}\mathcal{A}(\sigma,s)=-J\tfrac{d}{dst}-B_\sigma(st)$, we have $\phi_{\sigma,s}(t)=\phi_\sigma(st)$ and
\begin{align*}
	\frac{d}{ds}\phi_{\sigma,s}(t)&=t\frac{d}{dst}\phi_{\sigma}(st)=tJB_\sigma(st)\phi_\sigma(st)\\
	&=tJB_\sigma(st)\phi_{\sigma,s}(t),\quad \forall t\in [0,1].
\end{align*}
Using the symplectic property of $\phi_{\sigma}(\cdot)$ we obtain $\phi_{\sigma,s}^\top=-J\phi_{\sigma,s}^{-1}J$ and
\begin{equation*} 
-J\phi_{\sigma,s}^{-1}(1)\tfrac{d}{ds}\phi_{\sigma,s}(1)=	\phi_{\sigma,s}^\top(1)B_\sigma(s)\phi_{\sigma,s}(1).
\end{equation*}
Therefore,\begin{align*}
	\sign \Gamma(\mathcal{A}(\sigma,s),s)&=\sign 
	(B_\sigma(s)|_{\Lambda(\sigma,s)\cap \Lambda})\\
	&=\sign 
	(T_\sigma P^{-1}|_{\Lambda(\sigma,s)\cap \Lambda})=\dim 
	K(\sigma,s).
\end{align*}
The last equality is by the fact that $P$ is positive definite.
This completes the proof.
\end{proof}

\begin{lem}[Representation of crossing forms]\label{sec4:48}
	Let $(\sigma_*,s_*)$ be a crossing instant and 
	$\ker\mathcal{A}( \sigma_*,s_*)$ be a $k$-dimensional subspace of 
	$W_\Lambda^{1,2}$ with basis
	$ \{{z_1},..., { z_k}\}$. There exists a $k\times k$ matrix 
	$M(\sigma,s)$, defined 
	near $(\sigma_*,s_*)$, such that $\ker\mathcal{A}( \sigma,s)\neq 
	\{0\}$ 
	if and only if $\det M(\sigma,s)=0$. Moreover, $\det M(\sigma_*,s_*)=0$ 
	and 
	\begin{equation}\label{sec4:15}
		\begin{aligned}
			\frac{\partial M_{i,j}}{\partial 
				\sigma}(\sigma_*,s_*)&=-s_*\angles{\partial_{\sigma}B_\sigma(s_*\cdot)
				{z_i}
			}{ { z_j}}_L,\\ \frac{\partial M_{i,j}}{\partial 
				s}(\sigma_*,s_*)&=-\angles{(B_{\sigma_*}(s_*\cdot)+s_*
				{\partial_s}B_{\sigma_*}(s_*\cdot)) {z_i}}{ z_j}_L,
		\end{aligned}
	\end{equation}
	where $B_{\sigma,s}(t)=sB_\sigma(st)$ is defined in \eqref{sec3:25}.
\end{lem}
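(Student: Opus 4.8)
The statement is what one obtains from a Lyapunov--Schmidt (finite-dimensional) reduction of the Fredholm family $\{\mathcal{A}(\sigma,s)\}$ in a neighbourhood of the crossing $(\sigma_*,s_*)$, so the plan is to carry out that reduction and then differentiate the resulting ``bifurcation matrix''. First I would record the structural facts that make the reduction go through. The operator $\mathcal{A}(\sigma,s)\colon W^{1,2}_\Lambda\to L^2$ in \eqref{sec4:55} is bounded and depends at least $C^1$ on $(\sigma,s)$, because $-J\tfrac{d}{dt}$ is independent of $(\sigma,s)$ and $B_{\sigma,s}(t)=sB_\sigma(st)$ is a $C^1$ family of symmetric multiplication operators, symmetry of $B_\sigma$ being read off from \eqref{sec3:25} together with the symmetry of $P$ and $R$. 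Since $\Lambda=\mathbb{R}^n\times\{0\}\oplus\mathbb{R}^n\times\{0\}$ is a Lagrangian subspace, integration by parts annihilates the boundary term and gives $\langle\mathcal{A}(\sigma,s)u,v\rangle_{L}=\langle u,\mathcal{A}(\sigma,s)v\rangle_{L}$ for $u,v\in W^{1,2}_\Lambda$, so $\mathcal{A}(\sigma,s)$ is self-adjoint Fredholm of index $0$; in particular $\mathrm{ran}\,\mathcal{A}(\sigma_*,s_*)$ is closed and $(\mathrm{ran}\,\mathcal{A}(\sigma_*,s_*))^{\perp}=\ker\mathcal{A}(\sigma_*,s_*)=:K$ inside $L^2$.

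Next I would fix the orthogonal splittings $W^{1,2}_\Lambda=K\oplus K^{\perp}$ and $L^2=\mathrm{ran}\,\mathcal{A}(\sigma_*,s_*)\oplus K$, let $\Pi\colon L^2\to K$ be the orthogonal projection, and for $u=v+w$ with $v\in K$, $w\in K^{\perp}$ split $\mathcal{A}(\sigma,s)u=0$ into
\begin{align*}
(I-\Pi)\,\mathcal{A}(\sigma,s)(v+w)=0,\qquad \Pi\,\mathcal{A}(\sigma,s)(v+w)=0.
\end{align*}
The complementary block $(I-\Pi)\mathcal{A}(\sigma,s)\big|_{K^{\perp}}\colon K^{\perp}\to\mathrm{ran}\,\mathcal{A}(\sigma_*,s_*)$ is an isomorphism at $(\sigma_*,s_*)$, hence on a neighbourhood of $(\sigma_*,s_*)$ by $C^1$-continuity of the family; the first equation then determines $w=w(\sigma,s,v)$ uniquely and $\mathbb{R}$-linearly in $v$, with $w(\sigma_*,s_*,\cdot)\equiv 0$ and $w$ of class $C^1$ in $(\sigma,s)$. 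Using the given basis $\{z_1,\dots,z_k\}$ of $K$ simultaneously as a basis of $K\subset W^{1,2}_\Lambda$ and of $K=(\mathrm{ran}\,\mathcal{A}(\sigma_*,s_*))^{\perp}\subset L^2$, I would set
\begin{align*}
M_{ij}(\sigma,s):=\big\langle\,\mathcal{A}(\sigma,s)\big(z_i+w(\sigma,s,z_i)\big),\,z_j\,\big\rangle_{L}.
\end{align*}
Since $\mathcal{A}(\sigma,s)\big(v+w(\sigma,s,v)\big)$ lies in $K$ by construction, invertibility of the Gram matrix of $\{z_j\}$ shows that $u=v+w(\sigma,s,v)$ with $v=\sum_i a_i z_i$ belongs to $\ker\mathcal{A}(\sigma,s)$ precisely when $\sum_i a_i M_{ij}(\sigma,s)=0$ for every $j$; conversely every element of $\ker\mathcal{A}(\sigma,s)$ near $(\sigma_*,s_*)$ is of this form, since its $K^{\perp}$-component is forced to equal $w(\sigma,s,\cdot)$ applied to its $K$-component. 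Hence $\ker\mathcal{A}(\sigma,s)\neq\{0\}$ if and only if $\det M(\sigma,s)=0$, and $M(\sigma_*,s_*)=0$ because $\mathcal{A}(\sigma_*,s_*)z_i=0$ and $w(\sigma_*,s_*,z_i)=0$.

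Finally I would differentiate $M_{ij}$ at $(\sigma_*,s_*)$. Using $w(\sigma_*,s_*,z_i)=0$ and the product rule,
\begin{align*}
\partial_{\sigma}M_{ij}(\sigma_*,s_*)=\big\langle(\partial_{\sigma}\mathcal{A})(\sigma_*,s_*)z_i,\,z_j\big\rangle_{L}+\big\langle\mathcal{A}(\sigma_*,s_*)\,\partial_{\sigma}w,\,z_j\big\rangle_{L},
\end{align*}
and the second term vanishes because $\langle\mathcal{A}(\sigma_*,s_*)\partial_{\sigma}w,z_j\rangle_{L}=\langle\partial_{\sigma}w,\mathcal{A}(\sigma_*,s_*)z_j\rangle_{L}=0$ by self-adjointness and $z_j\in K$; the same computation applies to $\partial_{s}$. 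Since $\mathcal{A}(\sigma,s)=-J\tfrac{d}{dt}-B_{\sigma,s}$, one has $\partial_{\sigma}\mathcal{A}=-\partial_{\sigma}B_{\sigma,s}$ and $\partial_{s}\mathcal{A}=-\partial_{s}B_{\sigma,s}$, and a direct computation of these from $B_{\sigma,s}(t)=sB_\sigma(st)$ — namely $\partial_{\sigma}B_{\sigma,s}(t)=s\,\partial_{\sigma}B_\sigma(st)$ and $\partial_{s}B_{\sigma,s}(t)=B_\sigma(st)+s\,\partial_{s}\big(B_\sigma(st)\big)$ — reproduces exactly \eqref{sec4:15} upon evaluation at $(\sigma_*,s_*)$. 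I expect the main obstacle to be bookkeeping rather than conceptual: the two points to pin down carefully are the self-adjointness of $\mathcal{A}(\sigma,s)$ with respect to $\langle\cdot,\cdot\rangle_{L}$ — this is what makes $M(\sigma,s)$ a \emph{square} $k\times k$ matrix (via $(\mathrm{ran})^{\perp}=\ker$) and what forces the cross terms $\langle\mathcal{A}(\sigma_*,s_*)\partial_{\bullet}w,z_j\rangle_{L}$ to die — and the uniform invertibility of the complementary block on a full neighbourhood of $(\sigma_*,s_*)$, which is where $C^1$-continuity of the Fredholm family is genuinely needed.
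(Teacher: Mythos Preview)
Your proposal is correct and follows essentially the same Lyapunov--Schmidt reduction as the paper: split along $K=\ker\mathcal{A}(\sigma_*,s_*)$ and its orthogonal complement, solve the range equation for the complementary component via invertibility of $(I-\Pi)\mathcal{A}(\sigma,s)|_{K^{\perp}}$ near $(\sigma_*,s_*)$, define $M_{ij}$ by pairing against the $z_j$, and differentiate using $w(\sigma_*,s_*,\cdot)=0$ together with self-adjointness to kill the cross term. The paper cites \cite{cox_hamiltonian_2023} for the construction and records the derivative step slightly more tersely (``since $\mathcal{A}(\sigma_*,s_*)z_j=0$ and $K(\sigma_*,s_*)z_i=0$''), but the argument is the same as yours.
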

\begin{proof}
	We follow the approach described in \cite{cox_hamiltonian_2023}. Note that 
	$\mathcal{A}(\sigma,s)$ is a self-adjoint and Fredholm operator. We choose $P$ as the $L^2$-orthogonal 
	projection onto $\ker\mathcal{A}( \sigma,s)$. Then $u\in 
	\ker\mathcal{A}( \sigma,s)$ if and only if
	\begin{align}
		P\mathcal{A}( \sigma,s)u&=0,\label{sec4:12}\\
		(I-P)\mathcal{A}( \sigma,s)u&=0.\label{sec4:13}
	\end{align}
Denote by $X_0=(\ker\mathcal{A}( \sigma_*,s_*))^\bot\cap H^2(\mathbb{I})\cap 
	W_\Lambda^{1,2}$. Then any $u\in H^2(\mathbb{I})\cap W_\Lambda^{1,2}$ can be 
	written 
	uniquely as
	\begin{align*}
		u=\sum_{i=1}^{k}a_iz_i+\tilde{u},
	\end{align*}
	where $a_i\in \mathbb{R}$ and $\tilde{u}\in X_0$. Suppose $u$ is a solution 
	of \eqref{sec4:13}. We are going to prove that $\tilde{u}$ is solvable by 
	$a_i$. We define 
	\begin{align*}
		T(\sigma,s): X_0\rightarrow 
		\text{ran}(\mathcal{A}( \sigma,s)),\quad
		T(\sigma,s)= (I-P)\mathcal{A}( \sigma,s)|_{X_0}.
	\end{align*}
	Notice that $T(\sigma_*,s_*)$ is invertible, $T(\sigma,s)$ is 
	invertible 
	near $(\sigma_*,s_*)$. Since $u$ solves \eqref{sec4:13}, it follows that
	\begin{align*}
		(I-P)\mathcal{A}( \sigma,s)\tilde{u}=-(I-P)\mathcal{A}( \sigma,s)
		\sum_{i=1}^{k}a_iz_i,
	\end{align*}
	and \begin{align}
		\tilde{u}=\tilde{u}(\sigma,s)=-T(\sigma,s)^{-1}(I-P)\mathcal{A}( \sigma,s)
		\sum_{i=1}^{k}a_iz_i.
	\end{align}
	We simply write 
	$K(\sigma,s)=-T(\sigma,s)^{-1}(I-P)\mathcal{A}( \sigma,s)$. 
	Then $u=(I+K(\sigma,s))\sum_{i=1}^{k}a_iz_i$ satisfies the equation 
	\eqref{sec4:12} and is uniquely determined by the solution 
	$\mathbf{a}=(a_1,...,a_k)^T$ 
	of the equation
	\begin{align}
		P\mathcal{A}( \sigma,s)(I+K(\sigma,s))\sum_{i=1}^{k}a_iz_i=0.
	\end{align}
	Or equivalently,
	\begin{align}\label{sec4:14}
		\angles{\mathcal{A}( \sigma,s)(I+K(\sigma,s))\sum_{i=1}^{k}a_iz_i}{z_j}=0,\quad
		\forall j=1,...,k.
	\end{align}
	Let $M(\sigma,s)$ be a $k\times k$-matrix such that 
	\begin{align*}
		M_{i,j}(\sigma,s)=\angles{\mathcal{A}( \sigma,s)(I+K(\sigma,s)) 
			z_i}{z_j}.
	\end{align*} 
	Then equation \eqref{sec4:14} has the form $M(\sigma,s)\mathbf{a}=0$. 
	There is a one-to-one correspondence between
	solutions of $\mathcal{A}( \sigma,s)u=0$ and solutions of 
	$M(\sigma,s)\mathbf{a}=0$. Moreover, 
	$\ker\mathcal{A}(\sigma_0,s_0)\neq 
	\{0\}$ yields that $\det 
	M(\sigma_0,s_0)=0$. 
	
	The differential of $M(\sigma,s)$ with respect to $\sigma$ and $s$ 
	are as follows. Since $\mathcal{A}({\sigma_*,s_*})z_j=0$ and 
	$K(\sigma_*,s_*)z_i=0$, we get
	\begin{align}
		\partial_\sigma 
		M_{i,j}(\sigma_*,s_*)=\angles{\partial_\sigma\mathcal{A}({\sigma_*,s_*}) z_i}{z_j},\label{sec4:51}\\
		\partial_s M_{i,j}(\sigma_*,s_*)=\angles{\partial_s\mathcal{A}({\sigma_*,s_*}) z_i}{z_j}.\label{sec4:52}
	\end{align}
	The equations \eqref{sec4:15} are obtained by the fact that 
	\begin{align*}
		\partial_\sigma\mathcal{A}({\sigma_*,s_*})=&-\frac{\partial}{ 
			\partial 
			\sigma}|_{\sigma=\sigma_0}(s_*B_\sigma(s_*\cdot))=-s_*\partial_\sigma
		B_{\sigma_*}(s_*\cdot),\\
		\partial_s\mathcal{A}({\sigma_*,s_*})=&-\frac{\partial}{ 
			\partial s}|_{s=s_*}(sB_{\sigma_*}(s\cdot))
		=-B_{\sigma_*}(s_*\cdot)					
		-s_*\partial_sB_{\sigma_*}(s_*\cdot).	
	\end{align*}
	This completes the proof.
\end{proof}
Let $\{x_\sigma,\sigma\in[\sigma_0,\sigma_1]\}$ be a path of critical points of the functional $\mathcal{L}^{\tau}(\sigma)$. The associated Hamiltonian operators $\{\mathcal{A}(\sigma,s),\sigma\in[\sigma_0,\sigma_1].s\in(0,1]\}$ admit the following result.
\begin{prop}\label{sec5:22}
    Assume that $\dim\ker\mathcal{A}(\sigma_*,s_*)= 1$ for some crossing instant $(\sigma_*,s_*)$. There exists $\epsilon>0$ and a set of crossing instants $\{(\sigma,s(\sigma)),\sigma\in[\sigma_*-\epsilon,\sigma_*+\epsilon]\}$, which is locally the graph of a function $s(\sigma)$ such that $s_*=s(\sigma_*)$. Moreover, 
   \begin{align}\label{sec5:25}
       s'(\sigma)=\frac{\Gamma(\mathcal{A}(\sigma,s),\sigma)[u]}{\Gamma(\mathcal{A}(\sigma,s),s)[u]},\quad u\in \ker \mathcal{A}(\sigma,s(\sigma)),\sigma\in(\sigma_*-\epsilon,\sigma_*+\epsilon).
   \end{align}
\end{prop}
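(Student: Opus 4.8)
The plan is to reduce the crossing condition near $(\sigma_*,s_*)$ to a scalar equation via the finite-dimensional reduction of Lemma~\ref{sec4:48}, apply the implicit function theorem using the $s$-direction regularity from Lemma~\ref{sec3:4}, and then differentiate. First I would apply Lemma~\ref{sec4:48} at $(\sigma_*,s_*)$: because $\dim\ker\mathcal{A}(\sigma_*,s_*)=1$, the reduction matrix $M(\sigma,s)$ is a scalar $C^1$ function $m(\sigma,s)$ near $(\sigma_*,s_*)$ with $m(\sigma_*,s_*)=0$, and by the one-to-one correspondence between $\ker\mathcal{A}(\sigma,s)$ and $\ker M(\sigma,s)$ established there, $\ker\mathcal{A}(\sigma,s)\neq\{0\}$ if and only if $m(\sigma,s)=0$, with $\dim\ker\mathcal{A}(\sigma,s)\leq 1$ on a whole neighborhood.

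The $s$-direction is nondegenerate: by Lemma~\ref{sec3:4} the crossing $(\sigma_*,s_*)$ is regular in $s$ with $\Gamma(\mathcal{A}(\sigma_*,s_*),s)[z_1]=\dim\ker\mathcal{A}(\sigma_*,s_*)>0$, and since $\partial_s m(\sigma_*,s_*)=\langle\partial_s\mathcal{A}(\sigma_*,s_*)z_1,z_1\rangle=\Gamma(\mathcal{A}(\sigma_*,s_*),s)[z_1]$ by \eqref{sec4:52} and \eqref{sec4:50} (with $k=1$, $z_1$ a generator of $\ker\mathcal{A}(\sigma_*,s_*)$), we get $\partial_s m(\sigma_*,s_*)\neq 0$. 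The implicit function theorem then provides $\epsilon>0$ and a $C^1$ function $s(\cdot)$ on $[\sigma_*-\epsilon,\sigma_*+\epsilon]$ with $s(\sigma_*)=s_*$ and $m(\sigma,s(\sigma))=0$; shrinking $\epsilon$, the graph of $s(\cdot)$ coincides with the zero set of $m$, hence with the crossing set near $(\sigma_*,s_*)$, and $\dim\ker\mathcal{A}(\sigma,s(\sigma))=1$ throughout (the kernel dimension cannot jump since $m$ is scalar).

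For \eqref{sec5:25} I would differentiate $m(\sigma,s(\sigma))=0$, obtaining $s'(\sigma)=-\partial_\sigma m(\sigma,s(\sigma))/\partial_s m(\sigma,s(\sigma))$, and then re-apply the computation of Lemma~\ref{sec4:48} at the crossing instant $(\sigma,s(\sigma))$: for any nonzero $u\in\ker\mathcal{A}(\sigma,s(\sigma))$, formulas \eqref{sec4:15}, \eqref{sec4:49}, \eqref{sec4:50} identify $\partial_\sigma m$ and $\partial_s m$ (up to a common nonvanishing factor, which the ratio absorbs) with $\Gamma(\mathcal{A}(\sigma,s(\sigma)),\sigma)[u]$ and $\Gamma(\mathcal{A}(\sigma,s(\sigma)),s)[u]$ — both quadratic forms are scale-invariant on the one-dimensional kernel — from which \eqref{sec5:25} follows. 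Equivalently and more directly, one may take a $C^1$ generator $u(\sigma)$ of $\ker\mathcal{A}(\sigma,s(\sigma))$, differentiate the kernel relation $\mathcal{A}(\sigma,s(\sigma))u(\sigma)=0$ in $\sigma$, pair with $u(\sigma)$ in $L^2$, and use self-adjointness of $\mathcal{A}(\sigma,s(\sigma))$ together with $\mathcal{A}(\sigma,s(\sigma))u(\sigma)=0$ to annihilate $\langle\mathcal{A}u',u\rangle$; the remaining relation between $\Gamma(\mathcal{A},\sigma)[u]$, $s'(\sigma)$, and $\Gamma(\mathcal{A},s)[u]$ rearranges to \eqref{sec5:25}.

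The substantive content is already carried by Lemmas~\ref{sec3:4} and~\ref{sec4:48}, so the remaining work is essentially bookkeeping: checking that the reduction $m(\sigma,s)$ (equivalently the correction operator in Lemma~\ref{sec4:48}) inherits enough regularity from the $C^1$ dependence of $x_\sigma$ on $\sigma$ and the smoothness of $L$ to justify the implicit function theorem and the differentiation along the curve, and checking that $\Gamma(\mathcal{A}(\sigma,s(\sigma)),s)[u]\neq 0$ for all $\sigma$ near $\sigma_*$ (again by Lemma~\ref{sec3:4}), so that the quotient in \eqref{sec5:25} is well defined along the entire curve. The hypothesis $\dim\ker\mathcal{A}(\sigma_*,s_*)=1$ is the crucial input: it forces the reduction to be one-dimensional and rules out a jump in kernel dimension nearby.
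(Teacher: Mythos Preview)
Your proposal is correct and follows essentially the same route as the paper: reduce via Lemma~\ref{sec4:48} to a scalar function $M(\sigma,s)$, use Lemma~\ref{sec3:4} together with \eqref{sec4:50} and \eqref{sec4:52} to get $\partial_s M(\sigma_*,s_*)\neq 0$, and apply the implicit function theorem. The paper's proof stops at ``apply the implicit function theorem,'' leaving the derivation of \eqref{sec5:25} implicit; your additional step of differentiating $m(\sigma,s(\sigma))=0$ and identifying the partials with the crossing forms via \eqref{sec4:49}--\eqref{sec4:52} (or, alternatively, differentiating $\mathcal{A}(\sigma,s(\sigma))u(\sigma)=0$ and using self-adjointness) is the natural way to fill this in.
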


\begin{proof}
Since $k=\dim\ker\mathcal{A}(\sigma_*,s_*)= 1$, the matrix $M(\sigma,s)$ obtained from Lemma \ref{sec4:48} is exactly a scalar function with respect to $(\sigma,s)$. We get $M(\sigma_*,s_*)=0 $. Moreover, Lemma \ref{sec3:4} shows that the crossing instant $(\sigma_*,s_*)$ is always regular in the $s$-direction, i.e., $\Gamma(\mathcal{A}(\sigma_*,s_*),s_*)$ is non-degenerate.
  Combining with \eqref{sec4:50} and \eqref{sec4:52} we get $\partial_s M_{i,j}(\sigma_*,s_*)\neq 0$. Then the proof is completed by applying the Implicit Function Theorem to the equation $M(\sigma,s)=0$ (see, e.g., \cite[Section 1.2]{zhang_methods_2005}).
\end{proof}

\begin{thm}\label{sec5:23}
 Let $x=x_{\sigma_0}$ be a minimizer of the functional $\mathcal{L}^\tau$. Suppose the associated Hamiltonian operator $\mathcal{A}(\sigma_0,1)$ defined in \eqref{sec4:55} is degenerate with $\dim\ker\mathcal{A}(\sigma_0,1)=1$. There exists $\delta>0$ such that
 
 (1) $x$ is stable for $\sigma\in(\sigma_0,\sigma_0+\delta)$ provided
    \begin{align*}
        s'(\sigma_0)\geq 0;
    \end{align*}
  
  (2)  $x$ is unstable for $\sigma\in(\sigma_0,\sigma_0+\delta)$ provided
\begin{align*}
        s'(\sigma_0)<0.
    \end{align*}
    \end{thm}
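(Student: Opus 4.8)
The plan is to follow the one conjugate instant that sits at the right endpoint $s=1$ of the (rescaled) path $x_\sigma$ as the noise intensity increases past $\sigma_0$, and to read off the Morse index $m^-_\tau(x_\sigma)$ from whether that instant migrates into, or out of, the open interval $(0,1)$. Since $x=x_{\sigma_0}$ is a minimizer of $\mathcal{L}^\tau$ we have $m^-_\tau(x)=0$, so by the Morse index theorem (equivalently, by \eqref{sec4:45}) $x$ has no conjugate instant in $(0,1)$, while the hypothesis $\dim\ker\mathcal{A}(\sigma_0,1)=1$ says $s=1$ is a conjugate instant of multiplicity one. First I would apply Proposition \ref{sec5:22} at the crossing instant $(\sigma_*,s_*)=(\sigma_0,1)$: it yields $\epsilon>0$ and a $C^1$ map $s(\cdot)$ on $(\sigma_0-\epsilon,\sigma_0+\epsilon)$ with $s(\sigma_0)=1$ whose graph is, locally, exactly the crossing set of $\mathcal{A}$, together with $s'(\sigma_0)=\Gamma(\mathcal{A}(\sigma_0,1),\sigma_0)[u]/\Gamma(\mathcal{A}(\sigma_0,1),1)[u]$ for $0\neq u\in\ker\mathcal{A}(\sigma_0,1)$. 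By Lemma \ref{sec3:4} the crossing is regular in the $s$-direction and $\Gamma(\mathcal{A}(\sigma_0,1),1)[u]=\dim\ker\mathcal{A}(\sigma_0,1)>0$, hence $\sign s'(\sigma_0)=\sign\Gamma(\mathcal{A}(\sigma_0,1),\sigma_0)[u]$, which is also the sign of the first-order motion at $\sigma_0$ of the simple eigenvalue of $\mathcal{A}(\sigma,1)$ vanishing there.

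The second step is a localization. The conjugate instants of $x_{\sigma_0}$ in $(0,1]$ reduce to $\{1\}$, so there is $\delta''>0$ with $(0,1-\delta'']$ free of conjugate instants of $x_{\sigma_0}$; by continuity of $\phi_\sigma(\cdot)$ in $(\sigma,s)$ this persists for $\sigma$ near $\sigma_0$, while near $s=1$ the crossing set is the graph of $s(\cdot)$. Hence there is $\delta\in(0,\epsilon)$ such that, for every $\sigma\in[\sigma_0,\sigma_0+\delta]$, the only conjugate instant of $x_\sigma$ that can lie in $(0,1]$ is $s(\sigma)$, and it is simple. Now split according to the sign of $s'(\sigma_0)$. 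If $s'(\sigma_0)>0$, then (shrinking $\delta$) $s(\sigma)>1$ for $\sigma\in(\sigma_0,\sigma_0+\delta)$, so $x_\sigma$ has no conjugate instant in $(0,1]$; thus $\mathcal{A}(\sigma,1)$ is non-degenerate and $m^-_\tau(x_\sigma)=0$, i.e.\ $x_\sigma$ is a non-degenerate critical point of $\mathcal{L}^\tau(\sigma)$ of Morse index $0$, hence a strict local minimizer. Since under \eqref{sec4:53} the minimizer of $\mathcal{L}^{\leq\tau}(\sigma)$ exists and remains at $T=\tau$ (Lemma \ref{sec2:22}, Remark \ref{sec4:44}) and depends continuously on $\sigma$, it coincides with $x_\sigma$; hence $x$ is stable on $(\sigma_0,\sigma_0+\delta)$. (In the borderline $s'(\sigma_0)=0$ the crossing form $\Gamma(\mathcal{A}(\sigma_0,1),\sigma_0)$ vanishes on $\ker\mathcal{A}(\sigma_0,1)$, so $\sigma_0$ contributes $-m^-(\Gamma)=0$ to the spectral flow in \eqref{sec3:40} and $m^-_\tau(x_\sigma)$ cannot increase; this is absorbed into (1) once the next-order coefficient of $s(\cdot)$ is seen to keep $s(\sigma)\geq1$.) If $s'(\sigma_0)<0$, then (shrinking $\delta$) $s(\sigma)<1$, so $x_\sigma$ has a conjugate instant in the open interval $(0,1)$; by the Morse index theorem $m^-_\tau(x_\sigma)\geq1>0$, so $x_\sigma$ is not a local minimizer and $x$ is unstable. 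These are precisely (1) and (2).

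The step I expect to be the main obstacle is the localization: I must exclude the birth of new conjugate instants of $x_\sigma$ somewhere in $(0,1)$ away from $s=1$ as $\sigma$ varies — this uses uniform continuity of $\phi_\sigma(\cdot)$ in $(\sigma,s)$ together with $x_{\sigma_0}$ having none there — and I must know that the single branch furnished by Proposition \ref{sec5:22} exhausts the crossing set near $(\sigma_0,1)$, which is exactly where $\dim\ker\mathcal{A}(\sigma_0,1)=1$ enters, since then the reduced function $M(\sigma,s)$ of Lemma \ref{sec4:48} is scalar with $\partial_sM(\sigma_0,1)\neq0$ (Lemma \ref{sec3:4}), and the implicit function theorem applies cleanly. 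A minor further point, needed only for (1), is to promote ``non-degenerate local minimizer'' to ``minimizer of the variational problem''; this rests on the a priori bounds of Lemma \ref{sec2:22} and assumption \eqref{sec4:53}, which keep the minimization problem and its minimizer under uniform control near $\sigma_0$.
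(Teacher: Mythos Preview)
Your proposal is correct and follows essentially the same route as the paper's proof: both hinge on Proposition \ref{sec5:22} and Lemma \ref{sec3:4} to obtain $\sign s'(\sigma_0)=\sign\Gamma(\mathcal{A}(\sigma_0,1),\sigma_0)[u]$, from which (in)stability is read off. The paper argues by contraposition through the spectral flow formula \eqref{sec5:20} (positively unstable $\Rightarrow$ $m^-(\Gamma)>0$ $\Rightarrow$ $s'(\sigma_0)<0$), whereas you phrase the same computation directly by tracking whether the conjugate instant $s(\sigma)$ falls into $(0,1)$; your localization and borderline discussions are extra detail the paper leaves implicit.
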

 \begin{proof}

        We consider the spectral flow $sf(\mathcal{A}(\sigma,1),\sigma\in[\sigma_0,\sigma_0+\delta))$ such that $\delta>0$ and is small enough.
If $(x,\tau)$ is positively unstable, then from equation \eqref{sec5:20} we get 
  \begin{align*}
      m^-(\angles{\partial_{\sigma_0}(B_{\sigma,1})\cdot}{\cdot}|_{K(\sigma_0,1)})>0,
  \end{align*}
  or equivalently, 
  \begin{align*}
    \Gamma(\mathcal{A}(\sigma_0,1),\sigma_0)[u]=\angles{\partial_{\sigma_0}(B_{\sigma,1})u}{u}|_{K(\sigma_0,1)}<0.  
  \end{align*}
Applying Proposition \ref{sec5:22} we have the function $s=s(\sigma)$ for $\sigma\in[\sigma_0,\sigma_0+\epsilon)$. Moreover, $\Gamma(\mathcal{A}(\sigma_0,1),1)[u]=\Gamma(\mathcal{A}(\sigma_0,s(\sigma_0)),s(\sigma_0))[u]$, and is positive by Lemma \ref{sec3:4}.
 
The formula \eqref{sec5:25} implies that \begin{align*}
     \Gamma(\mathcal{A}(\sigma_0,1),\sigma_0)[u]=\Gamma(\mathcal{A}(\sigma_0,s(\sigma_0)),s(\sigma_0))[u]\cdot s'(\sigma_0),
 \end{align*}
 which is negative if $(x,\tau)$ is positively unstable. In this case, we get $ s'(\sigma_0)<0$. The second part (2) follows by the same reasoning. This completes the proof. 
    \end{proof}

\subsection{\texorpdfstring{Case of zero energy level: $k_0(L;x_\pm)<0$}{Case of zero energy level: k0(L;x-+)<0}}
In this section, we consider the case that 
\begin{align*}
	k_0(L(\sigma);x_\pm)<0,\ \forall \sigma\in[\sigma_0,\sigma_1],
\end{align*}
 and assume that the minimizer $(x,T)$ of $\mathcal{L}^{\leq\tau}$ is a critical point. Then $(x,T)$ lies on the zero energy level set $E^{-1}(0)$ and has Morse index $m^-(x,T)=0$. The stability analysis under the perturbation of noise intensity is more complicated since there is one more degree of freedom compared to the previous section.
Roughly speaking, the stability property of $(x,T)$ is determined by the crossing form of the spectral flow $sf(I_\sigma(0),\sigma\in[\sigma_0,\sigma_1])$ at $\sigma=\sigma_0$.

\begin{thm}\label{sec5:26}
Let $\{(x_{\sigma},T_{\sigma}),\sigma\in[\sigma_0,\sigma_1]\}$ be a path of critical points such that $(x,T):=(x_{\sigma_0},T_{\sigma_0})$ is a minimizer. Then

(1) $(x,T)$ is unstable if there exists $\sigma\in (\sigma_0,\sigma_0+\delta)$ such that $\mathfrak{n}(\sigma)=1$ for each $\delta>0$ small enough.

(2) if there exists $\delta_0 > 0$ such that $\mathfrak{n}(\sigma) = 0$ for each $\sigma \in (\sigma_0, \sigma_0 + \delta_0]$, then the stability of the pair $(x,T)$ is the same as the stability of $x$, as discussed in Theorem \ref{sec5:23}.
   \end{thm}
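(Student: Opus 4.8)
\emph{Proof proposal.}
The plan is to reduce the free-time stability of $(x,T)=(x_{\sigma_0},T_{\sigma_0})$ to the fixed-time stability of $x_{\sigma_0}$ by means of the index decomposition $m^-(x_\sigma,T_\sigma)=m^-_{T_\sigma}(x_\sigma)+\mathfrak{n}(\sigma)$ of Proposition~\ref{sec3:54}. Two facts enter at the outset. First, since $(x,T)$ is a minimizer, $m^-(x_{\sigma_0},T_{\sigma_0})=0$, and because the two summands are non-negative this forces $m^-_{T_{\sigma_0}}(x_{\sigma_0})=0$ and $\mathfrak{n}(\sigma_0)=0$. Second, by Lemma~\ref{sec3:46}(3), whenever $\mathfrak{n}(\sigma)=0$ one also has $m^+(\mathfrak{a}(\sigma))=0$. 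Throughout, ``(positively) stable'' is read as ``remains a minimizer for all $\sigma$ in some right-neighbourhood $(\sigma_0,\sigma_0+\delta)$''.

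For part (1) I would argue by the obstruction that a critical point of positive Morse index is not a local minimizer. If for every small $\delta>0$ there is $\sigma\in(\sigma_0,\sigma_0+\delta)$ with $\mathfrak{n}(\sigma)=1$, then Proposition~\ref{sec3:54} gives $m^-(x_\sigma,T_\sigma)=m^-_{T_\sigma}(x_\sigma)+1\geq 1$, so the Hessian of $\mathcal{L}(\sigma)$ at $(x_\sigma,T_\sigma)$ has a negative eigenvalue and $(x_\sigma,T_\sigma)$ cannot be a minimizer. Since such $\sigma$ accumulate at $\sigma_0$ from the right, the branch $(x_\sigma,T_\sigma)$ fails to stay a minimizer under arbitrarily small positive perturbations of the noise intensity, i.e.\ $(x,T)$ is unstable.

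For part (2), assume $\mathfrak{n}(\sigma)=0$ for all $\sigma\in(\sigma_0,\sigma_0+\delta_0]$; together with $\mathfrak{n}(\sigma_0)=0$, Proposition~\ref{sec3:54} yields $m^-(x_\sigma,T_\sigma)=m^-_{T_\sigma}(x_\sigma)$ for every $\sigma\in[\sigma_0,\sigma_0+\delta_0]$. Hence, on this subinterval the free-time Morse index of $(x_\sigma,T_\sigma)$ vanishes exactly when the fixed-time Morse index of $x_\sigma$, viewed as a critical point of $\mathcal{L}^{T_\sigma}(\sigma)$, vanishes, so the free-time minimizer property is equivalent to the fixed-time one. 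At the level of spectral flow this is consistent with Theorem~\ref{sec4:46}: the endpoint hypothesis $m^+(\mathfrak{a}(\sigma_i))=\mathfrak{n}(\sigma_i)$ holds (both sides vanish, by Lemma~\ref{sec3:46}(3)), the family $\mathfrak{a}$ contributes no spectral flow over $[\sigma_0,\sigma_0+\delta_0]$ by Lemma~\ref{sec5:3}, and therefore $sf(I_\sigma(0),\sigma\in[\sigma_0,\sigma_0+\delta_0])=sf(\mathcal{A}(\sigma,1),\sigma\in[\sigma_0,\sigma_0+\delta_0])$. It then remains to invoke Theorem~\ref{sec5:23}: if $\mathcal{A}(\sigma_0,1)$ is non-degenerate the fixed-time index is locally constant and $x$, hence $(x,T)$, stays a minimizer; if $\dim\ker\mathcal{A}(\sigma_0,1)=1$, the sign of $s'(\sigma_0)$ decides the stability of $x$, and thus of $(x,T)$, in precisely the same dichotomy.

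The step I expect to be the main obstacle is the degenerate case of part (2). The equivalence ``Morse index $0$'' $\Leftrightarrow$ ``local minimizer'' is genuine only for non-degenerate critical points, so when $\mathcal{A}(\sigma_0,1)$ has a kernel one must track the kernel of the free-time Hessian $I_\sigma(0)$ at $\sigma_0$ together with its crossing form, and show that $\mathfrak{n}\equiv 0$ near $\sigma_0^+$ forces these to be inherited from $\mathcal{A}(\sigma,1)$ --- equivalently, that the ``$\mathfrak{a}$-direction'', which carries the extra $\{0,1\}$-index, makes no negative contribution to the crossing form at $\sigma_0$. Here the description of $\mathcal{H}^\bot(\sigma)$ in Lemma~\ref{sec2:8} and the block form \eqref{sec3:29} of $I_\sigma(r)$ (for the OM functional $B=0$, so $I_\sigma(r)$ is block diagonal with a strictly positive scalar block) do the work, ensuring that the degeneracy and the crossing sign of $I_\sigma(0)$ at $\sigma_0$ agree with those of $\mathcal{A}(\sigma,1)$, whence the conclusion of Theorem~\ref{sec5:23} transfers verbatim. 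A minor additional point is that $T_\sigma$ now varies with $\sigma$, unlike the fixed $\tau$ of Section~\ref{sec4:56}; this is harmless, since the factor $T_\sigma$ is already incorporated into $B_{\sigma,s}$ in \eqref{sec3:25}.
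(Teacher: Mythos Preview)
Your proposal is correct and follows essentially the same route as the paper. For part~(1) you use the index decomposition of Proposition~\ref{sec3:54} to get $m^-(x_\sigma,T_\sigma)\geq 1$ and hence non-minimality, exactly as the paper does; for part~(2) you verify the endpoint hypothesis of Theorem~\ref{sec4:46} via Lemma~\ref{sec3:46}(3), use Lemma~\ref{sec5:3} to kill $sf(\mathfrak{a})$, deduce $sf(I_\sigma(0))=sf(\mathcal{A}(\sigma,1))$, and then hand off to Theorem~\ref{sec5:23} --- which is precisely the paper's argument. Your additional discussion of the degenerate case (the crossing-form inheritance and the block-diagonal structure of $I_\sigma(r)$) is more detailed than what the paper actually writes; the paper simply asserts that the stability, being governed by the crossing form of $I_\sigma(0)$ at $\sigma_0$, is equivalently governed by that of $\mathcal{A}(\sigma,1)$.
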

\begin{proof}
(1) The assumption $\mathfrak{n}(\sigma)=1$ shows that $(x_\sigma,T_\sigma)$ is not a minimizer (see \eqref{sec3:50}), implying that $(x,T)$ cannot be stable under small perturbations of $\sigma$.

(2) Since $(x,T)$ is a minimizer, it follows that $\mathfrak{n}(\sigma_0) = 0$.  The assumption shows that $\mathfrak{n}(\sigma)$ is zero for all $\sigma$ in the closed interval $ [\sigma_0,\sigma_0+\delta_0]$. By Lemma \ref{sec3:46}(3), we have the identity $\mathfrak{n}(\sigma) = m^+(\mathfrak{a}(\sigma))$ for $\sigma=\sigma_0$ and $\sigma=\sigma_0+\delta_0$. Applying Lemma \ref{sec5:3}, this condition implies that the spectral flow $sf(\mathfrak{a}(\sigma), \sigma \in [\sigma_0, \sigma_0 + \delta_0])$ is zero. As a consequence of Theorem \ref{sec4:46}, this leads to the equality:
\begin{align*}
	sf(I_\sigma(0), \sigma \in [\sigma_0, \sigma_0 + \delta_0]) = sf(\mathcal{A}(\sigma,1), \sigma \in [\sigma_0, \sigma_0 + \delta_0]).
\end{align*}
We conclude that the stability of $(x,T)$, which depends on the crossing form of $I_\sigma(0)$ at $\sigma=\sigma_0$, is equivalently determined by the crossing form of $\mathcal{A}(\sigma,1)$ at $\sigma=\sigma_0$.
\end{proof}	
\begin{rmk}
	
	Theorem \ref{sec5:26}(1) describes the case where an MPTP is noise-sensitive. In this case, infinitesimal variations in noise intensity cause the loss of the minimizer property through a jump in the Morse index by the $\{0,1\}$-index $\mathfrak{n}(\sigma)$.  
	
	Conversely, Theorem \ref{sec5:26}(2) (or Theorem \ref{sec5:23}) establishes noise-robust stability: when the $\{0,1\}$-index vanishes. The MPTP retains its minimizer property under noise perturbation condition $s'(\sigma_0)\geq0$ (Theorem \ref{sec5:23}(1)). This robustness arises from the spectral flow's invariance (Theorem \ref{sec4:46}) and the absence of variational bifurcations (Theorem \ref{sec4:42}). In this case, a suitable choice of $\sigma$ may be available to trigger or suppress bifurcations.
\end{rmk}

   \section{Conclusion and discussion}
   
This paper is a purely theoretical investigation of how stochastic fluctuations\textemdash quantified by the noise intensity $\sigma$\textemdash influence the variational structure and stability of transition paths in stochastic dynamical systems. The most probable transition path (MPTP) derived from Onsager-Machlup (OM) theory provides a more comprehensive description of rare events than the classical Freidlin-Wentzell (FW) theory. The key to this enhanced framework is the explicit dependence on the noise intensity $\sigma$. 
The effect of $\sigma$ is twofold, impacting both the existence and stability of MPTPs.

In FW theory, the MPTP connecting two metastable states is a zero energy trajectory, representing a heteroclinic orbit of the underlying deterministic Euler-Lagrange flow. In the OM framework, the existence of a zero energy MPTP between two endpoints $x_\pm$ is conditioned by the critical value $k_0(L;x_\pm)$. If the endpoints are such that $k_0(L;x_\pm) > 0$, no connecting trajectory exists on the zero energy level set. A classical example is the double-well system, where $x_\pm$ are two metastable states and $k_0(L;x_\pm) > 0$. Such an MPTP possesses positive Hamiltonian energy in OM theory, which is inaccessible to the standard FW framework.

Even for a transition path on the zero energy surface, the criteria for being a true minimizer differ between FW and OM theory. In FW theory, the minimizer property is determined only by the absence of conjugate points along the heteroclinic orbit, characterized by the Maslov index; see \eqref{sec4:45} and \cite{fleurantin_dynamical_2023}. In OM theory, there is an additional criterion: a discrete $\{0,1\}$-valued index arising from the extra term in the OM functional. As a result, a zero energy path that is minimal in FW theory may fail to be the true MPTP in the OM setting.

Finally, the noise intensity $\sigma$ influences the stability of degenerate transition paths. A small perturbation in $\sigma$ can determine whether such a path remains a minimizer. In particular, if an MPTP at $\sigma=\sigma_0$ is positively stable, it requires that $\mathfrak{n}(\sigma)=0$ for $\sigma$ close to $\sigma_0$ and $s'(\sigma_0)\geq0$. We conclude that the noise intensity not only affects the geometry of the transition path, but also governs its stability, providing a precise mechanism for triggering or suppressing bifurcations.


\appendix

\section{Proof of Lemma \ref{sec2:22}(3)}\label{seca:2}
\setcounter{equation}{0}
\renewcommand{\theequation}{A.\arabic{equation}}
We follow the approach of \cite[Lemma 5.3]{abbondandolo_lectures_2013-1}, providing a brief outline for completeness.
Firstly, we prove that the P-S sequence $\{(x_n,T_n)\}$ is uniformly Cauchy up to a subsequence. 
The inequality \eqref{sec2:1} shows that $T_n$ is bounded away from 0. Then there is a convergent subsequence of $\{T_n\}$ since $T_n$ is bounded from above by $\tau$. Let $\epsilon>0$. We have \begin{align*}
	c+\epsilon> T_n\int_0^1\frac{1}{2T_n^2}|\dot{x}_n(t)|^2-c_u(L)+kdt.
\end{align*}
It follows that
	\begin{align*}
		\int_0^1|\dot{x}_n(t)|^2dt<2\tau(c+\epsilon)+2\tau^2(c_u(L)-k).
	\end{align*}
	Therefore, $\Vert\dot{x}_n\Vert_{L^2}$ is uniformly bounded from above. Then $\Vert x_n\Vert _{L^2}$ is $1/2$-equi-Holder continuous by the fact that
	\begin{align*}
		|x_n(s')-x_n(s)|\leq \int_s^{s'}|\dot{x}_n(t)|dt
		\leq |s-s'|^{1/2}\Vert \dot{x}_n\Vert_{L^2}.
	\end{align*}
	By the Arzel\'a-Ascoli theorem, up to a subsequence $\{x_n\}$ is uniformly Cauchy, and therefore converges to some $x\in C(\mathbb{I},\mathbb{R}^n)$.
	
	By uniform convergence, the images of $\{x_n\}$ eventually belong to a bounded domain $B_r(\supseteq x_n(\mathbb{I})) $. By the fact that ${x_n}$ converges strongly in $L^2$, and $\Vert \dot{x}_n\Vert_{L^2}$ is uniformly bounded, we conclude that $\{x_n\}$ converges weakly in $W^{1,2}$ to some $x\in W_0^{1,2}(\mathbb{I},B_r)$ up to a subsequence.
	
	Now we verify the $W^{1,2}$ convergence, which is the same with Lemma \ref{sec1:4};  see, e.g. \cite[Lemma 5.3]{abbondandolo_lectures_2013-1} or \cite[Lemma 3.2.2]{asselle_existence_2015}. 	
	Let $TB_r$ be the restriction of the tangent bundle $T\mathbb{R}^n$ to $B_r$. We rewrite $\tilde{L}:=L|_{TB_r}$. Then
	\begin{align}\label{seca:3}
		|d_x\tilde{L}(x,v)|\leq C(1+|v|^2),\quad |d_v\tilde{L}(x,v)\leq C(1+|v|)
	\end{align}
hold for some constant $C=C(B_r)$.
	
	Since $(x_n,T_n)$ is a P-S sequence, we have 
	\begin{align*}
		o(1)=&d\mathcal{L}_k(x_n,T_n)[(x_n-x,0)]\\
		=&T_n\int_{0}^{1}d_x\tilde{L}(x_n,\dot{x}_n/T_n)[x_n-x]dt
		+T_n\int_{0}^{1}d_v\tilde{L}(x_n,\dot{x}_n/T_n)[(\dot{x}_n-\dot{x})/T_n]dt.
	\end{align*}
	Note that $\Vert \dot{x}_n\Vert_2$ is uniformly bounded, the estimate of $|d_x\tilde{L}(x,v)|$ in \eqref{seca:3} shows that the first integral above is infinitesimal since $x_n$ converges to $x$ uniformly. The property that $T_n$ is bounded away from zero shows that
	\begin{align}\label{seca:4}
		\int_{0}^{1}d_v\tilde{L}(x_n,\dot{x}_n/T_n)[(\dot{x}_n-\dot{x})/T_n]dt=o(1).
	\end{align}
	On the other hand, 
	\begin{align*}
		&\left(d_v\tilde{L}(x_n,\dot{x}_n/T_n)-d_v\tilde{L}(x_n,\dot{x}/T_n)\right)
		[(\dot{x}_n-\dot{x})/T_n]\\
		=&\int_{0}^{1}d_{vv}\tilde{L}(x_n,\dot{x}/T_n+s(\dot{x}_n-\dot{x})/T_n)[(\dot{x}_n-\dot{x})/T_n,(\dot{x}_n-\dot{x})/T_n]ds\\
		\geq& |\dot{x}_n-\dot{x}|^2/{T_n^2}\end{align*}
	By integrating this inequality over $t\in[0,1]$ and by \eqref{seca:4} we obtain
	\begin{align}\label{sec2:24}
		o(1)-\int_{0}^{1}d_v\tilde{L}(x_n,\dot{x}/T_n)[(\dot{x}_n-\dot{x})/T_n]dt\geq \Vert \dot{x}_n-\dot{x}\Vert_{L^2}^2/T_n^2.
	\end{align}
	The estimate of $|d_v\tilde{L}(x,v)|$ in \eqref{seca:3} shows that 
	\begin{align*}
		|d_v\tilde{L}(x_n,\dot{x}/T_n)|\leq 
		C(1+|\dot{x}/T_n|),
	\end{align*}
	where the right hand side is $L^2$ integrable. By the dominated convergence theorem in $L^2$-space, $d_v\tilde{L}(x_n,\dot{x}/T_n)$ converges strongly in $L^2$. Since $x_n\rightharpoonup x$ weakly in $W^{1,2}$, we have $\dot{x}_n-\dot{x}\rightharpoonup 0$ weakly in $L^2$. This shows that the left hand side of \eqref{sec2:24} is infinitesimal, and hence $\xi_n$ converges to $\xi$ strongly in $W^{1,2}$. 

\section{Crossing form}
\setcounter{equation}{0}
\setcounter{thm}{0}
\renewcommand{\theequation}{B.\arabic{equation}}
\renewcommand{\thethm}{B.\arabic{thm}}

We introduce an expression for the crossing form, using the notation introduced in Section \ref{sec4:56}.
Let $\phi_{\sigma,s}(\cdot):[0,1]\rightarrow \Sp(2n)$ denote the symplectic path given by the fundamental solution of the equation $\mathcal{A}(\sigma,s)u=0$. Then $\phi_{\sigma,s}(0)=I_{2n}$ and $\phi_{\sigma,s}^\top(t)J\phi_{\sigma,s}(t)=J$ by the symplectic property. We use the notation $\dot{}:=\tfrac{d}{dt}$.

\begin{thm}\label{sec4:16}
	Let $(\sigma,s)$ be a crossing instant. If $u(t)=\phi_{\sigma,s}(t)u(0),v(t)=\phi_{\sigma,s}(t)v(0)$ are paths in the subspace $K(\sigma,s)$,
the following hold	\begin{align} 
		\angles{\partial_{\sigma}B_{\sigma,s}u}{v}_{L^2}=\angles{-J\phi^{-1}_{\sigma,s}(1)\partial_{\sigma}\phi_{\sigma,s}(1)u(0)
		}{v(0)},\\
		\angles{\partial_{s}B_{\sigma,s}u}{v}_{L^2}=\angles{-J\phi^{-1}_{\sigma,s}(1)\partial_{s}\phi_{\sigma,s}(1)u(0) }{v(0)}.\label{sec4:30}
	\end{align}
\end{thm}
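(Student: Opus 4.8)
The plan is to pass from the $L^2$-pairing on the left to the boundary term on the right by differentiating the linear Hamiltonian flow in the parameter and exploiting its symplectic structure. First I would record the data attached to the fundamental solution. Since $\mathcal{A}(\sigma,s)u=-J\dot u-B_{\sigma,s}(t)u$, the path $\phi_{\sigma,s}(\cdot)\in\Sp(2n)$ satisfies
\begin{align*}
-J\dot\phi_{\sigma,s}(t)=B_{\sigma,s}(t)\,\phi_{\sigma,s}(t),\qquad \phi_{\sigma,s}(0)=I_{2n},
\end{align*}
together with the symplectic relation $\phi_{\sigma,s}^\top(t)J\phi_{\sigma,s}(t)=J$, from which one reads off $\phi_{\sigma,s}^{-1}(t)J=J\phi_{\sigma,s}^\top(t)$. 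Since $B_{\sigma,s}(t)=sB_\sigma(st)$ depends in a $C^1$ manner on $(\sigma,s)$ (the $s$-dependence is explicit and $B_\sigma$ is built from the $C^1$-in-$\sigma$ coefficients $P,Q,R$), smooth dependence of solutions of ODEs on parameters makes $\phi_{\sigma,s}$ differentiable in both $\sigma$ and $s$, so the computations below are legitimate.

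Next, write ${}':=\partial_\mu$ for $\mu$ equal to $\sigma$ or $s$, and set $Y:=\phi_{\sigma,s}^{-1}\,\phi_{\sigma,s}'$. Differentiating the flow equation in $\mu$ and substituting $\phi_{\sigma,s}'=\phi_{\sigma,s}Y$, the terms $B_{\sigma,s}\phi_{\sigma,s}'$ on the two sides cancel after using the flow equation once more, leaving $-J\phi_{\sigma,s}\dot Y=(\partial_\mu B_{\sigma,s})\phi_{\sigma,s}$; combining this with $\phi_{\sigma,s}^{-1}J=J\phi_{\sigma,s}^\top$ gives the linear equation
\begin{align*}
\dot Y(t)=J\,\phi_{\sigma,s}^\top(t)\,(\partial_\mu B_{\sigma,s})(t)\,\phi_{\sigma,s}(t).
\end{align*}
Because $\phi_{\sigma,s}(0)=I_{2n}$ for every parameter value, $\phi_{\sigma,s}'(0)=0$ and hence $Y(0)=0$; integrating over $[0,1]$ yields
\begin{align*}
\phi_{\sigma,s}^{-1}(1)\,\partial_\mu\phi_{\sigma,s}(1)=Y(1)=\int_0^1 J\,\phi_{\sigma,s}^\top(t)\,(\partial_\mu B_{\sigma,s})(t)\,\phi_{\sigma,s}(t)\,dt.
\end{align*}

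Finally I would multiply on the left by $-J$ (so $-J\cdot J=I_{2n}$) and pair against $u(0),v(0)$. Using $u(t)=\phi_{\sigma,s}(t)u(0)$ and $v(t)=\phi_{\sigma,s}(t)v(0)$, the integrand $\angles{\phi_{\sigma,s}^\top(t)(\partial_\mu B_{\sigma,s})(t)\phi_{\sigma,s}(t)u(0)}{v(0)}$ equals $\angles{(\partial_\mu B_{\sigma,s})(t)u(t)}{v(t)}$, so the integral is exactly $\angles{\partial_\mu B_{\sigma,s}\,u}{v}_{L^2}$. Taking $\mu=\sigma$ gives the first identity and $\mu=s$ the second, the two arguments being verbatim identical. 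The only real obstacle is the sign bookkeeping from $J^{-1}=-J$ and $J^\top=-J$, together with verifying that the variational ODE for $Y$ genuinely collapses to $\dot Y=J\phi_{\sigma,s}^\top(\partial_\mu B_{\sigma,s})\phi_{\sigma,s}$; once this is in hand the rest is routine. I would also note that the argument only uses that $u,v$ solve the linear Hamiltonian system, so the boundary conditions defining $K(\sigma,s)$ play no role beyond identifying the space on which the crossing form lives.
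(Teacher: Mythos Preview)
Your proof is correct and follows essentially the same route as the paper: both arguments reduce the identity to integrating a $t$-derivative over $[0,1]$, using the flow equation $\dot\phi_{\sigma,s}=JB_{\sigma,s}\phi_{\sigma,s}$, the initial condition $\phi_{\sigma,s}(0)=I_{2n}$, and the symplectic relation $\phi_{\sigma,s}^\top J\phi_{\sigma,s}=J$. The only cosmetic difference is that you package the computation via the auxiliary matrix $Y=\phi_{\sigma,s}^{-1}\partial_\mu\phi_{\sigma,s}$ and its ODE $\dot Y=J\phi_{\sigma,s}^\top(\partial_\mu B_{\sigma,s})\phi_{\sigma,s}$, whereas the paper differentiates the scalar quantity $\angles{\partial_\mu\phi_{\sigma,s}(t)u(0)}{J\phi_{\sigma,s}(t)v(0)}$ directly in $t$; the two calculations unwind to the same cancellation. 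Your closing remark that only the ODE (not the boundary condition defining $K(\sigma,s)$) is used is accurate.
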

\begin{proof}
We prove only the first formula, since the second one follows by the same reasoning.
Using the symplectic property of $\phi_{\sigma,s}$ that $\phi_{\sigma,s}^\top(t)J\phi_{\sigma,s}(t)=J$ and $J^\top=-J$, we have
	\begin{align*}
		\angles{-J\phi^{-1}_{\sigma ,s}(1)\partial_{\sigma }\phi_{\sigma,s}(1)u(0) 
		}{v(0)}&=\angles{-\phi_{\sigma ,s}^\top(1)J\partial_{\sigma }\phi_{\sigma,s}(1)u(0)}{v(0)}\\
		&=\angles{\partial_{\sigma }\phi_{\sigma,s}(1)u(0)}{J\phi_{\sigma ,s}(1)v(0)}.
	\end{align*}
Note that $\phi_{\sigma,s}(0)=I_{2n}$, we obtain \begin{align*}
		\angles{\partial_{\sigma }\phi_{\sigma,s}(1)u(0)}{J\phi_{\sigma ,s}(1)v(0)}
		=\int_{0}^{1}\frac{d}{dt}\angles{\partial_\sigma\phi_{\sigma,s}(t)u(0)}{J\phi_{\sigma,s}(t)v(0)}dt.
	\end{align*}
 A direct computation yields:
	\begin{align*}
	&\frac{d}{dt}\angles{\partial_\sigma\phi_{\sigma,s}(t)u(0)}{J\phi_{\sigma,s}(t)v(0)}	\\
		=&\angles{\partial_{\sigma}\dot{\phi}_{\sigma,s}(t)u(0)
		}{J\phi_{\sigma,s}(t)v(0)}+\angles{\partial_{\sigma}\phi_{\sigma,s}(t)u(0)
		}{J\dot{\phi}_{\sigma,s}(t)v(0)}\\
		=&\angles{\partial_{\sigma}(JB_{\sigma,s}(t)\phi_{\sigma,s}(t))u(0)
		}{J\phi_{\sigma,s}(t)v(0)}-\angles{\partial_\sigma \phi_{\sigma,s}(t)u(0)}{B_{\sigma,s}(t)\phi_{\sigma,s}(t)v(0)}\\
		=&\angles{J\partial_{\sigma}B_{\sigma,s}(t)\phi_{\sigma,s}(t)u(0)
		}{J\phi_{\sigma,s}(t)v(0)}+\angles{JB_{\sigma,s}(t)\partial_\sigma\phi_{\sigma,s}(t)u(0)}{J\phi_{\sigma ,s}(t)v(0)}\\
		&-\angles{\partial_\sigma\phi_{\sigma,s}(t)u(0)}{B_{\sigma,s}(t)\phi_{\sigma ,s}(t)v(0)}\\
			=&\angles{\partial_\sigma B_{\sigma,s}(t)u(t)}{v(t)}.
	\end{align*}
	We conclude that 
		\begin{align*}
		\angles{-J\phi^{-1}_{\sigma ,s}(1)\partial_{\sigma }\phi_{\sigma,s}(1)u(0) 
	}{v(0)}=\int_{0}^{1}\angles{\partial_\sigma B_{\sigma,s}(t)u(t)}{v(t)}dt.
	\end{align*}
	This completes the proof.    
\end{proof}



\end{document}